\documentclass[pdflatex,sn-mathphys-num]{sn-jnl}

\usepackage{graphicx}
\usepackage{multirow}
\usepackage{amsmath,amssymb,amsfonts}
\usepackage{amsthm}
\usepackage{mathrsfs}
\usepackage[title]{appendix}
\usepackage{xcolor}
\usepackage{textcomp}
\usepackage{manyfoot}
\usepackage{booktabs}
\usepackage{listings}

\newtheorem{theorem}{Theorem}[section]
\newtheorem{definition}{Definition}[section]
\newtheorem{lemma}[theorem]{Lemma}
\newtheorem{corollary}[theorem]{Corollary}
\newtheorem{proposition}[theorem]{Proposition}
\newtheorem{remark}[theorem]{Remark}

\begin{document}

\title[Foliation by bi-rotational self-shrinkers in Euclidean space]{Foliation by bi-rotational self-shrinkers in Euclidean space}

\author*[1]{\fnm{Junyoung} \sur{Park}}\email{jp2453@math.rutgers.edu}

\abstract{In this note, we first construct a family of bi-rotational asymptotically conical self-shrinkers with boundary called `trumpets', and a  family of bi-rotational self-shrinkers with boundary which close up called `disks'. We then show that these self-shrinkers together with two generalized cylinders, and the quadratic minimal cone foliate the whole space except some compact set containing the origin. }

\keywords{Self-shrinkers, foliation}
\pacs[MSC Classification]{53E10}

\maketitle
\tableofcontents
\section{Introduction}
A smooth one-parameter family of hypersurfaces $(M_t^n)_{t  \in I}$ in Euclidean space $\mathbf{R}^{n+1}$ is a solution to mean curvature flow (henceforth MCF) if its normal velocity is the mean curvature, i.e
\begin{equation}
    \langle \partial_tX_t, \nu_t \rangle  = -H_t.
\end{equation}
Here, $\nu_t$ is the unit normal of $M_t$ and $H = \textit{{\normalfont div}}_{M_t}\nu_t$ is the mean curvature of $M_t$. Since the seminal work of Brakke \cite{Br}, mean curvature flow has been extensively studied for the past several decades. \\

Since MCF is invariant under various Euclidean motions such as dilation and translation, one is naturally led to investigate special solutions to the flow which moves only by such motions. Among these self-similar solutions, there are solutions to MCF which simply shrinks down forward in time. These are called self-shrinking solutions. \\

Self-shrinking solutions are particularly important in MCF since they model finite time singularities in MCF. More precisely, if $\mathcal{M} = (M_t)_{t \in [0, T)}$ is a solution to MCF which forms a finite time singularity at $X_0 = (x_0, T)$, one can consider the (possibly weak) limit of the following blowup sequence
\begin{equation}
    M^i_t = \lambda_i(M_{T + \lambda^{-2}_it} - x_0), \ \lambda_i \to \infty.
\end{equation}
Such a limit flow is called the tangent flow of $\mathcal{M}$ at $X_0$. It is well known by the Huisken's monotonicity formula \cite{Hu} that tangent flows must be a self-shrinking flow given by
\begin{equation}
    M^{\infty}_t = \sqrt{-t}M^{\infty}_{-1}, \ t < 0.
\end{equation}
This connection to singularity analysis is one fundamental reason why self-shrinking solutions have received much attention in MCF theory. \\

In order to study self-shrinking solutions, it is enough to study the $t = -1$ time slice. Then the MCF equation reduces to the following elliptic type equation given by
\begin{equation}\label{self shrinker equation}
    \Tilde{H} = \frac{1}{2}\langle \Tilde{X}, \nu \rangle.
\end{equation}
Self-shrinkers are precisely solutions to the above equation.\\

There has been a lot of work on the construction of self-shrinkers. One example is the work of Kleene-M\o ller \cite{KM}, in which they constructed a family of rotationally symmetric self-shrinkers with boundary. Later, Angenent-Daskalopolous-\v{S}e\v{s}um \cite{ADS} used the previously mentioned self-shrinkers together with one more family of rotationally symmetric self-shrinkers they constructed to obtain a key estimate that was extensively used to study precise asymptotics of ancient ovals asymptotic to the round cylinder \\
$\mathbf{S}^{n-1}(\sqrt{2(n-1)}) \times \mathbf{R}$. This key estimate, which the authors refer to as inner-outer estimate, is a consequence of the fact that the self-shrinkers together with the round cylinder foliate a conical neighborhood of the cylinder. \\

It is natural to ask if one can obtain analogous foliations by self-shrinkers near generalized cylinders $\mathbf{S}^{n-k}(\sqrt{2(n-k)}) \times \mathbf{R}^k$, since such a foliation can be a crucial tool in the study of more general ancient ovals. It turns out that one can use `subsolutions' to the self-shrinker equations obtained by `rotating' the previously used self-shrinkers, and still carry out analogous analysis in the more general setting (e.g. Haslhofer-Du \cite{Du}). However, it would indeed be more satisfying if one can find actual self-shrinkers in the more general symmetry setting. \\

The goal of this paper is to show that this can be done.
\begin{theorem}\label{Main theorem : Existence of birotational trumpets}
For each $2 \leq k \leq n-1$, there exists a one-parameter family of `trumpets' which are $O(k) \times O(n-k+1)$ symmetric, asymptotically conical self-shrinkers diffeomorphic to $[0, \infty) \times \mathbf{S}^{k-1} \times \mathbf{S}^{n-k}$.     
\end{theorem}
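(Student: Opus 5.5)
We reduce to an ODE in the quarter plane. An $O(k)\times O(n-k+1)$ invariant hypersurface is determined by a profile curve $\gamma$ in $\{(x,y): x\ge 0,\ y\ge 0\}$, with $x=|X'|$ for $X'\in\mathbf{R}^k$ and $y=|X''|$ for $X''\in\mathbf{R}^{n-k+1}$. If $\gamma$ is parametrized by arclength with tangent angle $\theta$, the self-shrinker equation \eqref{self shrinker equation} becomes
\begin{equation*}
\theta' = \frac{1}{2}\left(x\sin\theta - y\cos\theta\right) + \frac{(k-1)\sin\theta}{x} - \frac{(n-k)\cos\theta}{y},
\end{equation*}
up to sign conventions. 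Away from the axes this is equivalent to writing $\gamma$ locally as a graph and obtaining a second-order quasilinear ODE. In these coordinates the generalized cylinder $\mathbf{S}^{n-k}(\sqrt{2(n-k)})\times\mathbf{R}^k$ is the horizontal line $y=\sqrt{2(n-k)}$. The other cylinder is $x=\sqrt{2(k-1)}$. The quadratic minimal cone is the ray $y/x=\sqrt{(n-k)/(k-1)}$.

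To produce the trumpets, I would solve the ODE for a graph $y=u(x)$ with initial data at large radius, using the asymptotic cone as the free parameter. Concretely, for each slope $a>0$ I would construct a solution on $[R_a,\infty)$ with $u(x)=ax+O(1/x)$. Following Kleene--M\o ller, I would write $u=ax+w$ and treat the equation for $w$ as a perturbation of the linear equation $\frac{x}{2}w' - \frac{1}{2}w = (\text{small})$. The solution is obtained by a contraction-mapping or fixed-point argument in a weighted space on $[R,\infty)$. Equivalently, I can quote Wang's uniqueness theory for shrinking ends asymptotic to a cone together with the existence of self-similar ends. This gives a smooth family $u_a$ depending continuously on $a$, with the end $[0,\infty)\times\mathbf{S}^{k-1}\times\mathbf{S}^{n-k}$ coming from $[R_a,\infty)\ni x$. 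I would then continue each solution backwards in $x$ for as long as it remains a graph over the $x$-axis with $x,y$ bounded away from zero. The trumpet is the maximal such piece, or a truncation of it at a chosen compact boundary. The topology $[0,\infty)\times \mathbf{S}^{k-1}\times\mathbf{S}^{n-k}$ is automatic provided the profile stays in the open quadrant.

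The key step, and the one I expect to be the main obstacle, is keeping the profile in the open quadrant and controlling its graphicality far enough in to reach a definite compact set. Here I would use barrier and comparison arguments. On the part of a solution where $u$ is a graph, comparison with the cylinder lines and with the minimal cone gives monotonicity of $u/x$ and of $u-\sqrt{2(n-k)}$ in $x$. For example, if the quantity $x u' - u$ has a sign at infinity, the ODE propagates that sign inward. This should show that, for cones with slope on one side of the minimal cone, the trumpet stays between the cone and the horizontal cylinder line until it reaches some fixed radius $x_0$ independent of $a$ in compact ranges. For $a\to 0$ or $a\to\infty$, I would swap the roles of $x$ and $y$ and use graphs over the $y$-axis. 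This handles the two regimes near the two cylinders.

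Finally, I would check three things to conclude. First, the solutions are genuinely asymptotically conical, by the decay $w=O(1/x)$ with derivative bounds. Second, they are $O(k)\times O(n-k+1)$ symmetric by construction. Third, distinct $a$ give distinct trumpets, by uniqueness of the end asymptotic to a given cone.
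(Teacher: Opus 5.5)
Your route is genuinely different from the paper's. You build the conical end first, by a Kleene--M\o ller contraction on $[R_a,\infty)$ for each slope, and then try to continue it inward. The paper never constructs the end directly. It shoots backward from a finite point $x=a$ with conical data $u(a)=\sigma a$, $u'(a)=\sigma$. It shows that the region $\max(\beta_{n,k},\sigma x)\le u<\sigma_{n,k}x$, $\phi>0$, $x\ge\alpha_k$ is invariant, with bounds independent of $a$. It then lets $a\to\infty$, and only afterwards derives the asymptotics from the integral identity. That gives the global picture in one stroke: each trumpet is convex and increasing, lies between the cylinder $y=\beta_{n,k}$ and the minimal cone, and has its boundary on that cylinder at some $x_s(\sigma)\in[\alpha_k,\beta_{n,k}/\sigma]$. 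The later sweep-out and foliation arguments rely on exactly this.

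Your route instead gives uniqueness of the end and smooth dependence on the slope directly. The paper only obtains these through Remark~\ref{smooth dependence of trumpets on slope}, which is essentially your contraction. For the statement as literally written, your first step already suffices. Truncating at $x=R_a$ gives an $O(k)\times O(n-k+1)$-symmetric, asymptotically conical end with $u\approx ax>0$, hence with the required topology. So your ``key step'' is not needed for the theorem.

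One repair is required in the contraction. If you put $\frac{w''}{1+u'^2}$ into the ``small'' right-hand side of $\frac{x}{2}w'-\frac12 w$, the map loses derivatives and will not contract. The Kleene--M\o ller scheme keeps $xu''$ in the principal part: $g=xu'-u$ solves $g'-\frac{x}{2}(1+u'^2)g=(1+u'^2)\bigl(\frac{(n-k)x}{u}-(k-1)u'\bigr)$. You integrate this from infinity against $e^{-\frac12\int_t^s z(1+u'^2)\,dz}$ and set $u=ax-x\int_x^\infty g\,t^{-2}\,dt$, which is precisely the paper's integral identity. Wang's theorem supplies only uniqueness, so existence of the end has to come from this fixed point.

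If you want the paper's trumpets, which reach the cylinder, the inward invariants you propose do not close. The sign of $g=xu'-u$ propagates inward only while $u<\sigma_{n,k}x$, because at a zero of $g$ one has $g'=(1+u'^2)\frac{k-1}{xu}(\sigma_{n,k}^2x^2-u^2)$. Nothing in your list prevents the profile from crossing the minimal cone transversally as $x$ decreases. At such a crossing with $x>\alpha_k$ and $u'<\sigma_{n,k}$, the ODE gives $u''<0$, which is compatible with $g<0$ and $u'>0$. Comparison with the cone provides no maximum principle for this initial value problem.

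The missing ingredient is convexity, bootstrapped together with the cone bound on $x>\alpha_k$. At the first point (as $x$ decreases) where $u=\sigma_{n,k}x$, one has $u'<\sigma_{n,k}$, hence $\phi<0$, contradicting $\phi\ge 0$. Conversely, at the first zero of $\phi$ one has $u'>0$, because $u>\beta_{n,k}$ and $x>\alpha_k$. Then $\phi'=\bigl(\frac{k-1}{x^2}-\frac{n-k}{u^2}\bigr)u'<0$ by $u<\sigma_{n,k}x$, which is impossible when approaching from the side where $\phi>0$.

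Finally, no change of coordinates is needed as the slope tends to $0$: graphs over the $x$-axis with $0\le u'\le\sigma$ work for all $\sigma\in(0,\sigma_{n,k})$. The swap only matters for slopes above $\sigma_{n,k}$. The paper obtains those by replacing $k$ with $n-k+1$ and applying the reflection $R_{n,k}$.
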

\begin{theorem}\label{Main theorem : existence of birotational caps}
   For each $2 \leq k \leq n-1$, there exists a one-parameter family of `disks' which are $O(k) \times O(n-k+1)$ symmetric self-shrinkers diffeomorphic to $\mathbf{S}^{k-1} \times \mathbf{D}^{n-k+1}$.
\end{theorem}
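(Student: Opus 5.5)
The statement to prove is Theorem \ref{Main theorem : existence of birotational caps}. The plan is to reduce the $O(k)\times O(n-k+1)$-symmetric shrinker equation to an ODE for a profile curve in the quarter plane, and then run a shooting argument from the axis $\{y=0\}$. Write points as $(x,y)\in\mathbf{R}^k\times\mathbf{R}^{n-k+1}$ with $r=|x|$ and $s=|y|$. A hypersurface invariant under $O(k)\times O(n-k+1)$ is generated by a curve $\gamma$ in $\{r>0,\ s\ge 0\}$. If $\gamma$ has arclength parameter and tangent angle $\theta$, equation (\ref{self shrinker equation}) becomes
\begin{equation*}
\theta' = \frac{1}{2}\bigl(r\sin\theta - s\cos\theta\bigr) + \frac{(k-1)\sin\theta}{r} - \frac{(n-k)\cos\theta}{s},
\end{equation*}
with $r'=\cos\theta$ and $s'=\sin\theta$, up to a sign convention. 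A `disk' diffeomorphic to $\mathbf{S}^{k-1}\times\mathbf{D}^{n-k+1}$ corresponds to an arc starting perpendicularly on the axis $s=0$ at some point $r=a>0$, since the $\mathbf{S}^{n-k}$ factor collapses there. It should also stay in $r>0$ on the portion we keep. The hypersurface has boundary, so we only need a compact piece of the arc. The one parameter is $a$, restricted to a suitable range, for instance $a$ slightly larger than $\sqrt{2(k-1)}$, which is the $r$-value of the generalized cylinder $\mathbf{S}^{k-1}(\sqrt{2(k-1)})\times\mathbf{R}^{n-k+1}$.

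\textbf{Step 1: local existence at the singular axis.} As $s\to0$ the term $(n-k)\cos\theta/s$ is singular. I would write the curve near the axis as a graph $r=u(s)$ with $u(0)=a$ and $u'(0)=0$. The ODE then becomes
\begin{equation*}
\frac{u''}{1+u'^2} = \frac{s u'}{2} - \frac{u}{2} + \frac{k-1}{u} + \frac{(n-k)u'}{s}.
\end{equation*}
This is a regular-singular ODE of the type handled by Kleene--M\o ller and Angenent--Daskalopoulos--\v{S}e\v{s}um. A fixed point argument for $u'(s)=s^{-(n-k)}\int_0^s \sigma^{n-k}(\dots)\,d\sigma$ gives a unique smooth even solution for small $s$. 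It depends continuously (indeed smoothly) on $a$, and the resulting hypersurface is smooth across the axis.

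\textbf{Step 2: continuation and control.} The solution extends as long as $r>0$ and the curvature stays bounded. I would show that on a definite interval $s\in[0,S]$ the graph $u$ stays positive and bounded away from $0$. The idea is to compare with the cylinder $u\equiv\sqrt{2(k-1)}$, using maximum-principle barriers for the graph equation. Take $a>\sqrt{2(k-1)}$. At $s=0$ we get $u''(0)(1+\tfrac{1}{n-k})$ equal to $-a/2+(k-1)/a<0$, so $u$ bends toward the cylinder. Linearizing around the cylinder gives a Hermite-type equation. From it, $u-\sqrt{2(k-1)}$ should be controlled by $(a-\sqrt{2(k-1)})$ times an explicit solution of the linearized operator, uniformly on compact $s$-intervals. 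This yields a family of embedded graphical arcs over $[0,S]$ for $a$ in a small interval $(\sqrt{2(k-1)},\sqrt{2(k-1)}+\delta)$. Rotating each arc gives an embedded compact self-shrinker with boundary, diffeomorphic to $\mathbf{S}^{k-1}\times\mathbf{D}^{n-k+1}$.

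For the later foliation statement one would want larger arcs, for instance continuing until the curve meets a fixed sphere or cone. Then I would use a phase-plane argument showing that $\theta$ stays monotone and the curve cannot return to the axis or hit $r=0$ before leaving a compact region.

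\textbf{Main obstacle.} I expect the hardest step to be the uniform a priori control in Step 2. The issue is preventing the arc from turning back or collapsing toward $r=0$ before it exits the region of interest, since there the $(k-1)/r$ term competes with the $(n-k)/s$ term. My first attempt would be to find monotone quantities, such as the sign of $r\sin\theta-s\cos\theta$ (the shrinker support function) and the sign of $\theta'$. I would show these cannot change sign along the arc, by checking at a first touching point that the ODE forces the wrong sign. This is the analogue of the Kleene--M\o ller argument, now with two rotational factors.
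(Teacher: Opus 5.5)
Your Steps 1--2 prove the statement as literally posed, by a far more local route than the paper's and with much less information about the disks. In fact Step 1 alone suffices: for each $a>0$, a short arc leaving the axis perpendicularly, rotated, is an embedded shrinker with boundary diffeomorphic to $\mathbf{S}^{k-1}\times\mathbf{D}^{n-k+1}$. Step 2 is just continuous dependence near the cylinder $u\equiv\alpha_k$.

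The paper (Proposition \ref{Existence of caps}) avoids the singular axis. It shoots \eqref{Main ODE x = v(y)} from $y=\epsilon$ with $v(\epsilon)=a$, $v'(\epsilon)=0$. First-touching arguments then show that $\alpha_k<v\le a$ and $\phi=(\frac y2-\frac{n-k}{y})v'-\frac v2+\frac{k-1}{v}<0$ (equivalently $v''<0$) persist on all of $[\epsilon,\beta_{n,k})$. This is the monotone-quantity argument you hoped for in your ``main obstacle''. The ingredient your sketch lacks is the box $\{v>\alpha_k,\ y<\beta_{n,k}\}$, on which $\frac y2-\frac{n-k}{y}<0$. There, touching $v=\alpha_k$ with $v'<0$ forces $\phi>0$, and at a first zero of $\phi$ one has $v'<0$ and $\phi'=(\frac{n-k}{y^2}-\frac{k-1}{v^2})v'<0$.

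These bounds are uniform in $\epsilon$. Letting $\epsilon\to0$ therefore gives, for every $a>\alpha_k$, a concave, strictly decreasing disk reaching $\tilde{C}_{n,k}=\mathbf{R}^k\times\mathbf{S}^{n-k}(\beta_{n,k})$, with the bounds \eqref{C0 estimate for caps in proposisition 4.1} and \eqref{C1 estimate for caps in proposisition 4.1}. That global family (all large $a$, boundary on $\tilde{C}_{n,k}$) is what Corollary \ref{sweeping out by caps} and the foliation use, and a perturbation from the cylinder for $a$ near $\alpha_k$ cannot supply it.

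Conversely, your fixed point at the regular singular point directly gives uniqueness, smooth dependence on $a$, and smoothness across the axis. The paper's subsequential limit only yields a $C^1$ extension to $y=0$, and it imports uniqueness and smooth dependence from unstable manifold theory (Lemma \ref{smooth dependence of cap solutions}).

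You do need to fix your formulas.
\begin{itemize}
\item The correct curve equation is $\theta'=\frac12(r\sin\theta-s\cos\theta)-\frac{(k-1)\sin\theta}{r}+\frac{(n-k)\cos\theta}{s}$. Yours admits neither cylinder, and the relative sign between the curvature terms and $\frac12\langle X,\nu\rangle$ is not a convention.
\item The graph equation must contain $-\frac{(n-k)u'}{s}$, as in \eqref{Main ODE x = v(y)}. That is the sign your formula $u'=s^{-(n-k)}\int_0^s\sigma^{n-k}(\cdots)\,d\sigma$ actually encodes. With $+\frac{(n-k)u'}{s}$, the singular part $u''=\frac{n-k}{s}u'$ has the solutions $u'=c\,s^{n-k}$, which vanish at the axis, so uniqueness there would be lost.
\item At $s=0$ one gets $(n-k+1)\,u''(0)=-\frac a2+\frac{k-1}{a}$, not a factor $1+\frac{1}{n-k}$. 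Your sign conclusion is unaffected.
\end{itemize}
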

\begin{theorem}\label{Main theorem : Foliation}[See theorem \ref{specific form of theorem 1.3}]
    For $2 \leq k \leq n-1$, there exists a compact subset $0 \in K(k,n) \subset \mathbf{R}^{n+1}$ so that $\mathbf{R}^{n+1} - K$ is foliated by `trumpets', `disks', generalized cylinders and the $O(k) \times O(n-k+1)$ `minimal cone'.
\end{theorem}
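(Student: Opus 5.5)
The plan is to reduce everything to a planar ODE problem by exploiting the $O(k)\times O(n-k+1)$ symmetry. Write points as $(x,y)\in\mathbf{R}^k\times\mathbf{R}^{n-k+1}$ and set $r=|x|$, $s=|y|$. An invariant hypersurface is then generated by a curve $\gamma$ in the open quadrant $Q=\{r>0,s>0\}$. The self-shrinker equation \eqref{self shrinker equation} becomes a curvature equation for $\gamma$:
\[
\kappa=\Big\langle \tfrac{1}{2}(r,s)-\big(\tfrac{k-1}{r},\tfrac{n-k}{s}\big),\,N\Big\rangle .
\]
Three special solutions are known explicitly. The minimal cone is the ray $s/r=\sqrt{(n-k)/(k-1)}$. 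The two generalized cylinders are the horizontal line $s=\sqrt{2(n-k)}$ and the vertical line $r=\sqrt{2(k-1)}$. A foliation of $\mathbf{R}^{n+1}\setminus K$ by invariant self-shrinkers is the same thing as a foliation of $Q\setminus \tilde K$ by such profile curves, where $\tilde K$ is compact. I would also include in the statement the axes $\{s=0\}$ and $\{r=0\}$, where the disks close up. So the goal is to show that the trumpet and disk profile curves from Theorems \ref{Main theorem : Existence of birotational trumpets} and \ref{Main theorem : existence of birotational caps}, together with the three lines above, are pairwise disjoint and sweep out everything outside a large compact set.

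\textbf{Step 1: parametrize the families and record asymptotics.}

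For the trumpets, I would write each curve as a graph over the cone direction in polar-type coordinates. Each trumpet is asymptotic to a ray of slope $a$ and is parametrized by $a$. By the asymptotic analysis in the construction, it takes the form $\rho\mapsto$ (ray of slope $a$) $+\,O(\rho^{-1})$ for large $\rho$, and has a boundary point near the compact set. For the disks, I would use the construction of Theorem \ref{Main theorem : existence of birotational caps}. There each profile meets one axis orthogonally, say $\{s=0\}$ at height $r=b$ with $b$ large, and is a graph $s=u_b(r)$ over a neighborhood of the axis that stays close to the vertical cylinder line. The symmetric family meets $\{r=0\}$.

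The picture I expect has four regions:
\begin{itemize}
\item between the cone and the horizontal cylinder line: trumpets with $a$ in an interval ending at the cone slope, flaring out;
\item beyond the horizontal cylinder line toward the $r$-axis: disks capping off on $\{s=0\}$;
\item the two analogous regions on the other side of the cone.
\end{itemize}

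\textbf{Step 2: disjointness.}

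Two distinct profile curves in the same family cannot touch tangentially, by uniqueness for the ODE. Crossings can be ruled out by a sign/comparison argument. I would differentiate the family in its parameter. The variation field $w=\partial_a\gamma_a\cdot N$ is a Jacobi field for the drift operator
\[
L=\Delta-\tfrac12\langle X,\nabla\rangle+|A|^2+\tfrac12
\]
restricted to invariant functions, that is, a second order linear ODE along $\gamma_a$. It then suffices to show $w>0$ along the curve. At infinity this follows from the asymptotics, since $w\sim$ the rate of change of slope times $\rho$. Near the boundary or the axis it follows from the construction, where the curves are explicit perturbations of the cylinder or the cone. Positivity in between is obtained by a Sturm comparison argument. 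Comparing with the explicit lines (cone, cylinders), which are themselves solutions, shows that each family curve stays strictly on one side of them.

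\textbf{Step 3: exhaustion, and the main obstacle.}

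I must show that the union covers $Q$ outside a compact set. Using $w>0$ and continuity in the parameter, the union of each family is an open region bounded by the limiting curves. As $a$ tends to the cone slope, the trumpets must converge to the cone. As $a$ tends to the cylinder end, or as $b\to\infty$, the curves must converge to the cylinder lines or escape to infinity. In addition, the boundary points of the trumpets and disks must stay in a fixed compact set.

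The hard part is this uniform control: proving that the boundary points and the limiting curves behave as claimed, uniformly in the parameter near the endpoints. I would try blow-down and rescaling arguments that compare with the explicit solutions, together with the linearized analysis near the cone and cylinders used in the construction. Choosing $K$ as a large ball minus the conical neighborhoods then completes the proof.
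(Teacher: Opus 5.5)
\textbf{There is a genuine gap.} Your skeleton matches the paper's architecture: reduce to profile curves, get disjointness from a sign on the normal variation $V$ in the parameter, and get exhaustion from continuity, the intermediate value theorem and uniform control of where the curves start. But the two analytic ingredients that make this work are missing, and you leave the second one explicitly open.

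\textbf{Positivity of $V$.} ``Sturm comparison'' does not say what you compare with, and you ask for $V>0$ along the whole curve. That is neither needed nor claimed; the paper only proves $V>0$ for $x\ge x_0$, which is why a compact set is removed.

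The paper's mechanism has three parts.
\begin{itemize}
\item It uses the explicit barrier $W=e^{|X|^2/8}$, which satisfies $\mathcal{L}(W)/W\le \frac{n+2}{4}+|A|^2-\frac{1}{16}|X|^2$.
\item It runs a maximum principle on $V/W$, using $V/W\to 0$ at infinity for trumpets.
\item This requires a uniform bound $|A|^2\le C(n,k)$ on the trumpets far out, which comes from $1\le w_\sigma\le 1+72/x^2$.
\end{itemize}

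For the disks the barrier fails near the tip, where $|A|^2$ is of order $a^2$, comparable to $|X|^2/16\approx a^2/16$. Your assertion that near the axis the disks are ``explicit perturbations of the cylinder or the cone'' is wrong. After the rescaling $\rho=ay$, $x=a-a^{-1}\psi(\rho,a)$, they converge to the bowl-type profile $\Psi$. This gives $V=(1+\Psi'^2)^{-1/2}+O(a^{-2})>0$ for $\rho\le M$.

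Even then, you must rule out the maximum of $V/W$ sitting at the interface $x_{Ma}$. The paper does this by comparing $V_\rho/V\approx -1/M$ with $W_\rho/W\approx -M/(4\beta_{n,k}^2)$ for $M$ large. Nothing in your proposal plays this tip/intermediate matching role.

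\textbf{Uniform starting points and sweep-out.} You flag this as the hard part and offer only ``blow-down and rescaling.'' The paper's device is the quantity $w=\frac{x^2-\alpha_k^2}{2x}\frac{d}{dx}\ln|u^2-\beta_{n,k}^2|$, which satisfies a Riccati-type ODE.
\begin{itemize}
\item Convexity of trumpets, and the $C^1$ estimate for disks, give $w\ge 1$.
\item A maximum-principle barrier gives an upper bound: $1+72/x^2$ for trumpets, and $1+K/x^2+K/(a^2-x^2)$ for disks. The disk barrier again needs the tip asymptotics of $w(x_{Ma})$ via $\Psi$.
\item Integrating yields explicit two-sided $C^0$ bounds, for instance $\sqrt{\beta_{n,k}^2+\sigma^2e^{-C/x^2}(x^2-\alpha_k^2)}\le u_\sigma\le\sqrt{\beta_{n,k}^2+\sigma^2(x^2-\alpha_k^2)}$ together with $\sigma x\le u_\sigma$.
\end{itemize}
These bounds give at once the uniform estimates $x_s(\sigma)<l_0$ and $x_s(a)\le x_1$, and the sweep-out by the intermediate value theorem. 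Without them, or a concrete substitute, Step 3 is not a proof.

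\textbf{Geometric slips.} The disks capping off on $\{s=0\}$ stay close to the horizontal line $s=\beta_{n,k}$ for bounded $r$, not the vertical one. Near the axis they are graphs over $s$, not over $r$.
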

\begin{remark}
    The foliation in theorem \ref{Main theorem : Foliation} essentially satisfies all the key estimates of the foliation given in \cite{ADS} and \cite{Du}. In particular, our foliation can alternatively be used in the analysis of ancient ovals with bi-rotationally symmetry, instead of the foliation by `sub' self-shrinkers given by \cite{Du}.
\end{remark}

The outline of the paper is as follows. In section \ref{prereq}, we provide several preliminary calculations related to bi-rotationally symmetric self-shrinkers. In section \ref{section 3}, we prove theorem \ref{Main theorem : Existence of birotational trumpets}, and analyze the behavior of `trumpets'. In section \ref{section 4}, we prove theorem \ref{Main theorem : existence of birotational caps}, and analyze the behavior of `disks'. Finally in section \ref{section 5}, we prove theorem \ref{Main theorem : Foliation}. 
\section{Preliminary calculations}\label{prereq}
In this section, we collect some preliminary materials, and define frequently used constants. The details of the computations in this section can be found in \cite{survey}. \\

For $2 \leq k \leq n-1$, any $O(k) \times O(n-k+1)$ symmetric hypersurface can be identified by a curve in the first quadrant $I \ni s \to (x(s), y(s)) \in \mathbf{R}_+^2$ via
\begin{equation}
     I \times \mathbf{S}^{k-1}  \times \mathbf{S}^{n-k}\ni (s, w_1, w_2) \to(x(s)w_1, y(s)w_2) \in \mathbf{R}^{n+1}.
\end{equation}
We will consider the case when $y$ (resp. $x$) is given by a function of $x$ (resp. $y$). When  $y = u(x)$, the unit normal, and the principal curvatures are given by
\begin{equation}\label{geometric quantities calculation}
    \begin{cases}
    &\nu = \frac{1}{\sqrt{1 + (u')^2}}(-u'(x)w_1, w_2) \\ & \lambda_1 = -\frac{u''}{(1 + (u')^2)^{3/2}}  \\ &\lambda_i = -\frac{u'}{x(1 + (u')^2)^{1/2}} \text{ for }i = 2,3,..,k  \\ & \lambda_i = \frac{1}{u(1 + (u')^2)^{1/2}} \text{ for }i = k+1,..,n
\end{cases}
\end{equation}
Then the self shrinker equation \eqref{self shrinker equation} reduces to an ordinary differential equation
\begin{equation}
    \frac{u''}{1 + (u')^2} = (\frac{x}{2} - \frac{k-1}{x})u' - \frac{u}{2} + \frac{n-k}{u}\label{Main ODE y = u(x)}
\end{equation}
In case $x = v(y)$, same line of computation yields
\begin{equation}
    \frac{v''}{1 + (v')^2} = (\frac{y}{2} - \frac{n-k}{y})v' - \frac{v}{2} + \frac{k-1}{v}\label{Main ODE x = v(y)}
\end{equation}
Finally, we define some frequently used constants
\begin{equation}\label{frequently used constants}
    \alpha_{k} = \sqrt{2(k-1)}, \ \beta_{n,k} = \sqrt{2(n-k)}, \ \sigma_{n,k} = \frac{\sqrt{n-k}}{\sqrt{k-1}}. 
\end{equation}

\section{Existence of `trumpets' and its analysis}\label{section 3}
In this section, we prove theorem \ref{Main theorem : Existence of birotational trumpets}. The strategy is to construct a sequence of solutions to \eqref{Main ODE y = u(x)} with varying initial data. By showing that these approximate solutions obey certain uniform estimates, we take a subsequential limit which gives us the desired object. We note that the proof in proposition \ref{Existence of trumpets} is a slight modification of the proof of proposition 8.1 in \cite{Park}, but for the sake of completeness, we present the proof here as well.
\begin{proposition}\label{Existence of trumpets}
For each $2 \leq k \leq n-1$, $0 < \sigma < \sigma_{n,k}$, there exists $u_{\sigma} : [x_{s}(\sigma) , \infty) \to \mathbf{R}_+$ so that\\\\
(i) $x_{s}(\sigma) \geq \alpha_k$ and $u_{\sigma}(x_{s}(\sigma)) = \beta_{n,k}$.\\
(ii) $u_{\sigma} \in C^0([x_{s}(\sigma), \infty)) \cap C^{\infty}((x_{s}(\sigma), \infty))$  solves equation \eqref{Main ODE y = u(x)}. In other words, the surface $\Sigma^{k,n}_{\sigma}$ given by the parametrization
\begin{equation}
    [x_s(\sigma) , \infty) \times \mathbf{S}^{k-1}\times \mathbf{S}^{n-k} \ni (x, w_1, w_2) \to (xw_1, u_{\sigma}(x)w_2) \in  \mathbf{R}^{n+1}
\end{equation}
is a self-shrinker. \\
(iii) $u_{\sigma}$ is strictly increasing, and convex for $x > x_{s}(\sigma)$. Moreover, there exists $c = c(n,k) > 0$ so that
\begin{equation}\label{asymptotics of trumpets}
    |u_{\sigma}(x) - \sigma x| \leq \frac{c(n,k)}{\sigma x}, \ \ |u_{\sigma}'(x) - \sigma| \leq \frac{c(n,k)}{\sigma x^2}
\end{equation}
for all $x \geq x_{s}(\sigma)$. 
\end{proposition}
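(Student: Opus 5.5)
We construct $u_\sigma$ by shooting from infinity. This uses the standard fact that self-shrinker graphs over a cone are controlled by the cone at infinity. Concretely, for each $R$ large we solve \eqref{Main ODE y = u(x)} backward from $x=R$ with data $u(R)=\sigma R$ and $u'(R)=\sigma$. We then let $R\to\infty$ along a sequence and take a subsequential limit using uniform estimates. The condition $\sigma<\sigma_{n,k}$ is exactly what makes the cone $y=\sigma x$ a \emph{subsolution-type} barrier. Indeed, plugging $u=\sigma x$ into the right-hand side of \eqref{Main ODE y = u(x)} gives
\[
\Big(\tfrac{x}{2}-\tfrac{k-1}{x}\Big)\sigma-\tfrac{\sigma x}{2}+\tfrac{n-k}{\sigma x}=\frac{(n-k)-(k-1)\sigma^2}{\sigma x}>0 .
\]
So the cone has $u''>0$ forced by the equation, and the backward solutions should be convex and lie above the line.

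\textbf{Key steps.} First we establish monotonicity and convexity along the backward flow. Set $Q=(\tfrac x2-\tfrac{k-1}{x})u'-\tfrac u2+\tfrac{n-k}{u}$. Differentiating,
\[
Q'=\Big(\tfrac{x}{2}-\tfrac{k-1}{x}\Big)u''+\tfrac{k-1}{x^2}u'-\tfrac{n-k}{u^2}u'.
\]
Hence as long as $u'>0$, the quantity $Q$ (and so $u''$) cannot first reach zero going backward from $R$ while $x>\alpha_k$ and $u<\beta_{n,k}$ fails in the right direction. Running this continuity argument shows $u''>0$ and $u'>0$ on the maximal backward interval. It also shows $u'$ stays in $(0,\sigma]$, and that $u\ge \sigma x$ with $u\ge u(R)+\sigma(x-R)$ lies above the tangent line.

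Second, we identify where the solution stops. The backward solution continues until either $u'$ blows up (vertical tangent) or $x$ reaches $\alpha_k$. The sign analysis of $Q$ near the point $(\alpha_k,\beta_{n,k})$, which corresponds to the generalized cylinders, is used to show the curve hits the horizontal line $y=\beta_{n,k}$ at some $x_s\ge\alpha_k$. This gives (i). At that point the curve may become vertical, which is why only $C^0$ regularity is claimed at the endpoint. Alternatively, we can switch to the $x=v(y)$ description \eqref{Main ODE x = v(y)} near the endpoint to continue.

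Third, we prove the asymptotic estimate \eqref{asymptotics of trumpets} uniformly in $R$. Write $w=u-\sigma x$. Using $1+(u')^2\le 1+\sigma^2$ and the computation above, $w$ satisfies, for $x$ large,
\[
\Big(\tfrac x2-\tfrac{k-1}{x}\Big)w'-\tfrac w2=\frac{u''}{1+u'^2}-\frac{(n-k)-(k-1)\sigma^2}{\sigma x}+O\Big(\frac{w}{x^2}\Big).
\]
Integrating $\big(w/x\big)'\approx \tfrac{2}{x^2}(\ldots)$ from $x$ to $R$, with $w(R)=w'(R)=0$, together with $u''\ge0$ and $\int u''\le\sigma$, gives $0\le w\le c/(\sigma x)$ and $|w'|\le c/(\sigma x^2)$. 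Here $c$ depends only on $n,k$ because $(n-k)-(k-1)\sigma^2\le n-k$. These bounds plus the ODE give local $C^\infty$ bounds on compact subsets of $(x_s,\infty)$ uniformly in $R$. By Arzelà–Ascoli we extract a limit $u_\sigma$ that solves \eqref{Main ODE y = u(x)}. Convexity and monotonicity pass to the limit, and become strict by the maximum principle applied to $Q$. We also need $x_s(R)$ to converge, which follows from the uniform bounds.

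\textbf{Main obstacle.} The delicate step is the second one: controlling the backward solution near the region $x\approx\alpha_k$, $u\approx\beta_{n,k}$. There we must show that it reaches height $\beta_{n,k}$ with $x_s\ge\alpha_k$, rather than turning over or hitting the $y$-axis. We also need this terminal behaviour to be stable as $R\to\infty$, so that the limit does not degenerate. I would first try a barrier comparison using the sign of the zeroth-order terms $-\tfrac u2+\tfrac{n-k}{u}$ and $-\tfrac{k-1}{x}+\tfrac x2$. Both change sign exactly at $\beta_{n,k}$ and $\alpha_k$, so in the quadrant $x\ge\alpha_k$, $u\le\beta_{n,k}$ the equation forces $u''>0$ incompatible with turning. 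If that fails, I would follow \cite{Park} and switch to the graph $x=v(y)$ there.
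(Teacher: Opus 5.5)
You follow the paper's strategy: shoot backward from $x=a$ with $u(a)=\sigma a$, $u'(a)=\sigma$, then pass to a limit. However, the continuity argument in your first step does not close, and the missing ingredient is exactly what resolves your ``main obstacle''.

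Take a first backward zero $\bar x$ of $Q$. Then $u''(\bar x)=0$, and $Q>0$ on $(\bar x,a]$ forces $Q'(\bar x)\ge 0$. Your formula gives $Q'(\bar x)=\big(\tfrac{k-1}{\bar x^2}-\tfrac{n-k}{u^2}\big)u'(\bar x)$, and $u'(\bar x)>0$ follows from $Q(\bar x)=0$, $u>\beta_{n,k}$, $\bar x>\alpha_k$. This is negative, giving the contradiction, only if $u(\bar x)<\sigma_{n,k}\bar x$, i.e.\ only if the curve lies below the minimal cone. Nothing in your argument guarantees this. Going backward, $u-\sigma_{n,k}x$ increases, since $u'\le\sigma<\sigma_{n,k}$. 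Above the cone, $Q'>0$ at a zero of $Q$, so $Q$ (hence $u''$) can change sign.

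The paper propagates $\max(\beta_{n,k},\sigma x)\le u<\sigma_{n,k}x$ and $\phi>0$ simultaneously and rules out each failure mode. For the cone, use that $y=\sigma_{n,k}x$ is an exact solution. At a first touching point $u'\le\sigma<\sigma_{n,k}$, which forces $\phi<0$ and contradicts $\phi\ge 0$.

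The same barrier settles your second step. Since $\sigma_{n,k}\alpha_k=\beta_{n,k}$, the cone passes through $(\alpha_k,\beta_{n,k})$, so $u<\sigma_{n,k}x$ rules out reaching $x=\alpha_k$ while $u>\beta_{n,k}$. The bound $0<u'\le\sigma$ also excludes your ``vertical tangent'' scenario. Together with extendability wherever $x>\alpha_k$ and $u>\beta_{n,k}$, the backward solution ends at $x_0(\sigma,a)\in[\alpha_k,\beta_{n,k}/\sigma]$ with $u=\beta_{n,k}$. The upper bound $\beta_{n,k}/\sigma$ comes from $u\ge\sigma x$ and is what keeps the endpoints bounded as $a\to\infty$.

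Your sign heuristic in the quadrant $u\le\beta_{n,k}$ concerns the wrong region. The trumpet lives in $u\ge\beta_{n,k}$, where $-\tfrac u2+\tfrac{n-k}{u}<0$, so $Q>0$ is not automatic there.

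Two smaller points:
\begin{itemize}
\item After letting $a\to\infty$ you still need to show $u_\sigma(x_s(\sigma))=\beta_{n,k}$. The paper does this using the uniform Lipschitz bound $u'\le\sigma$.
\item Your derivation of $|u'-\sigma|\le c/(\sigma x^2)$ needs a pointwise bound on $u''$, which $\int u''\le\sigma$ does not provide. The paper obtains both asymptotic bounds from the exact Kleene--M\o ller integral identity with integrating factor $e^{-\frac12\int_t^s z(1+(u')^2)dz}$.
\end{itemize}
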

\begin{proof}[Proof of proposition \ref{Existence of trumpets}]
For each $0 < \sigma < \sigma_{n,k}$, $a > \frac{\beta_{n,k}}{\sigma}$, we define 
    \begin{equation}
        u_{\sigma, a} : (x_{0}(\sigma, a), a] \to \mathbf{R}_+
    \end{equation}
    to be the solution to the initial value problem
    \begin{equation}
        \begin{cases}
            \frac{u_{\sigma, a}''}{1 + (u_{\sigma, a}')^2} = (\frac{x}{2} - \frac{k-1}{x})u_{\sigma, a}' - \frac{u_{\sigma, a}}{2} + \frac{n - k}{u_{\sigma, a}} \\
            u_{\sigma, a}(a) = \sigma a \ \ \ \ u'_{\sigma, a}(a) = \sigma
        \end{cases}
    \end{equation}
    with 
    \begin{equation}\label{definition ofx0(sigma, a)}
      x_{0}(\sigma, a) = \inf\{x_0 \geq \alpha_k\ | \ u_{\sigma, a} \text{ is well defined on }(x_0, a], \ u_{\sigma, a} > \beta_{n,k} \}. 
    \end{equation}
 \textbf{Claim}: For any $x \in (x_0(\sigma, a), a]$,
\begin{equation}\label{key apriori estimate of appx sol for trumpets1}
       \max(\beta_{n,k}, \sigma x) \leq u_{\sigma, a}(x) < \sigma_{n,k}x,   
    \end{equation}
    and
    \begin{equation}\label{key apriori estimate of appx sol for trumpets2}
        \phi_{\sigma, a}(x) = (\frac{x}{2} - \frac{k-1}{x})u_{\sigma, a}' - \frac{u_{\sigma, a}}{2} + \frac{n - k}{u_{\sigma, a}} > 0.
    \end{equation}
    $u_{\sigma, a}(x) \geq \beta_{n,k}$ is automatic by the definition of $x_0(\sigma, a)$ given by \eqref{definition ofx0(sigma, a)}. When $x = a$, \eqref{key apriori estimate of appx sol for trumpets1}, \eqref{key apriori estimate of appx sol for trumpets2} hold by the initial condition $u_{\sigma, a}(a) = \sigma a, \ u_{\sigma, a}'(a)= \sigma$. By continuity of $u_{\sigma, a}$, there exists a small $\epsilon > 0$ so that  
   \begin{equation}
       u_{\sigma, a}(x) < \sigma_{n,k}x, \ \ \phi_{\sigma, a}(x) > 0
   \end{equation}
    for all $a - \epsilon \leq x \leq a$. Also, by \eqref{Main ODE y = u(x)}, we have
    \begin{equation}\label{relationship between two derivative and phi}
        u_{\sigma, a}'' = (1 + (u_{\sigma, a}')^2)\phi_{\sigma, a} > 0
    \end{equation}
    for $x \in [a - \epsilon, a)$. Integrating twice in $x$, together with the initial data, we obtain \begin{equation}
        \sigma x - \sigma a\leq u_{\sigma, a}(x) - u_{\sigma, a}(a) = u_{\sigma, a}(x) - \sigma a,
    \end{equation} hence
    \begin{equation}
        \sigma x \leq u_{\sigma,a}(x).
    \end{equation}
   Thus, \eqref{key apriori estimate of appx sol for trumpets1}, \eqref{key apriori estimate of appx sol for trumpets2} hold for all $x \in [a - \epsilon, a]$ for some $\epsilon > 0$.\\

Define 
\begin{equation}\label{definition of x0bar}
        \overline{x}_0(\sigma, a) = \inf\{x_0 > x_0(\sigma, a) \ | \ \text{\eqref{key apriori estimate of appx sol for trumpets1}, \eqref{key apriori estimate of appx sol for trumpets2} hold for } x \in [x_0, a]\}
    \end{equation}
    By previous discussion, $\overline{x_0}(\sigma, a) \in [x_0(\sigma, a), a)$ is well defined.\\

    To prove the claim, we show that 
    \begin{equation}
        \overline{x}_0(\sigma, a) = x_0(\sigma, a).
    \end{equation}
    Suppose not, and assume that $\overline{x}_0(\sigma, a) > x_0(\sigma, a)$. Then by the definition of $\overline{x}_0(\sigma, a)$, we must have
    \begin{equation}\label{Property 1}
        \text{\eqref{key apriori estimate of appx sol for trumpets1}, \eqref{key apriori estimate of appx sol for trumpets2} hold for $x \in (\overline{x_0}(\sigma, a), a]$},
    \end{equation}
     and either
     \begin{equation}\label{possible 2-1}
         u_{\sigma, a}(\overline{x}_0(\sigma, a)) = \sigma \overline{x}_0(\sigma, a),
     \end{equation}
      or 
      \begin{equation}\label{possible 2-2}
        u_{\sigma, a}(\overline{x}_0(\sigma, a)) = \sigma_{n,k}\overline{x}_0(\sigma, a) , 
      \end{equation}
      or 
      \begin{equation}\label{possible 2-3}
         \phi_{\sigma, a}(\overline{x}_0(\sigma, a)) = 0. 
      \end{equation}
      
      \eqref{possible 2-1} is not possible; otherwise by the mean value theorem, there exists $\overline{x}_0(\sigma, a) < x_1 < a$ so that $u_{\sigma, a}'(x_1) = \sigma$. On the other hand, by \eqref{Property 1} and \eqref{relationship between two derivative and phi}, $u_{\sigma, a}'$ is strictly increasing in $x \in (\overline{x_0}(\sigma, a), a]$. Since $u_{\sigma, a}'(a) = \sigma$, this implies that
      \begin{equation}
          \sigma = u_{\sigma, a}'(x_1) < u_{\sigma, a}'(a) = \sigma
      \end{equation}
      which is a contradiction.\\
      
      If \eqref{possible 2-2} holds, then we must have
      \begin{equation}\label{strict inequality in case2}
          u_{\sigma, a}'(\overline{x}_0(\sigma, a)) < \sigma_{n,k}.
      \end{equation}
     Indeed by \eqref{Property 1}, we have $u_{\sigma, a}'(\overline{x}_0(\sigma, a)) \leq \sigma_{n,k}$. If the strict inequality fails, then by uniqueness of solution to initial value problem, we must have
     \begin{equation}
         u_{\sigma, a}(x) = \sigma_{n,k}x
     \end{equation}
     for $x \geq \overline{x}_0(\sigma, a)$ which contradicts $u_{\sigma,a}'(a) = \sigma < \sigma_{n,k}$. Combining \eqref{strict inequality in case2} with the assumption $\overline{x_0}(\sigma, a) > x_0(\sigma, a) \geq  \alpha_k$, we obtain
\begin{align*}
     \phi_{\sigma, a}(\overline{x}_0(\sigma, a)) &< (\frac{\overline{x}_0(\sigma, a)}{2} - \frac{k-1}{\overline{x}_0(\sigma, a)})\sigma_{n,k} - \frac{\sigma_{n,k}\overline{x}_0(\sigma, a)}{2} + \frac{n-k}{\sigma_{n,k}\overline{x}_0(\sigma, a)} = 0.
\end{align*}
    On the other hand, by \eqref{Property 1}, and the continuity of $\phi_{\sigma, a}$, we must have $\phi_{\sigma, a}(\overline{x}_0(\sigma, a))\geq 0$ which is a contradiction. Therefore, we see that
    \begin{equation}\label{weak C^0 for trumpet}
        \sigma x \leq u_{\sigma, a}(x) < \sigma_{n,k}x
    \end{equation}
    hold for $x \in [ \overline{x}_0(\sigma, a), a]$.\\
    
    If \eqref{possible 2-3} holds, then by \eqref{Property 1}, we must have
    \begin{equation}
        \phi_{\sigma, a}'(\overline{x}_0(\sigma, a)) \geq 0.
    \end{equation}
    On the other hand, $\phi_{\sigma, a}(\overline{x}_0(\sigma, a)) = 0$ together with the assumptions $\overline{x}_0(\sigma, a) > x_0(\sigma, a) \geq \alpha_k$, we have
    $$u_{\sigma, a}'(\overline{x}_0(\sigma, a)) = \frac{\overline{x}_0(\sigma, a)(u^2_{\sigma, a} - \beta_{n,k}^2)}{u_{\sigma, a}(\overline{x}_0^2(\sigma, a) - \alpha_k^2)}> 0.$$
    By using \eqref{Main ODE y = u(x)} to compute $\phi_{\sigma, a}'$, and then using \eqref{Property 1}, \eqref{weak C^0 for trumpet}, and $u'_{\sigma, a} >0$, we have
\begin{equation}
    \phi_{\sigma, a}'(\overline{x_0}(\sigma, a)) = [\frac{k-1}{\overline{x_0}(\sigma, a)^2} - \frac{n - k}{u_{\sigma, a}(\overline{x_0}(\sigma, a))^2}]u_{\sigma, a}'(\overline{x_0}(\sigma, a)) < 0
\end{equation}
    which is a contradiction. \\
    
    Since all three cases \eqref{possible 2-1}, \eqref{possible 2-2}, \eqref{possible 2-3} are impossible, the initial assumption $\overline{x}_0(\sigma, a) > x_0(\sigma, a)$ has to be  false. Thus we obtain
 \begin{equation}
     \overline{x}_0(\sigma, a) =x_0(\sigma, a),
 \end{equation}
    thus proving the claim.\\

   We now use \eqref{key apriori estimate of appx sol for trumpets1}, \eqref{key apriori estimate of appx sol for trumpets2} to pass through the limit as $a \to \infty$.
  \eqref{key apriori estimate of appx sol for trumpets1}, \eqref{key apriori estimate of appx sol for trumpets2}, and \eqref{relationship between two derivative and phi} implies that $ 0 \leq u_{\sigma, a}' \leq \sigma$, $u_{\sigma}'' \geq 0$, and $u_{\sigma}$ is bounded from below by $\beta_{n,k}$. Hence $u_{\sigma}$ extends in $C^1$ to $x = x_0(\sigma, a)$. We also obtain
\begin{equation}\label{estimate of x0(sigma, a)}
      x_0(\sigma, a) \in [\alpha_k, \frac{\beta_{n,k}}{\sigma}], \ \ \ u_{\sigma, a}(x_{0}(\sigma, a)) = \beta_{n,k}.
 \end{equation}
To see this, it is enough to show that $u_{\sigma, a}(x_0(\sigma, a)) = \beta_{n,k}$ since by \eqref{definition ofx0(sigma, a)}, \eqref{key apriori estimate of appx sol for trumpets1}, we have $\alpha_k \leq x_0(\sigma, a)$, and $\sigma x_0(\sigma, a) \leq \beta_{n,k}$. If $u_{\sigma, a}(x_0(\sigma, a)) > \beta_{n,k}$, then by \eqref{key apriori estimate of appx sol for trumpets1}, $x_0(\sigma, a) > \alpha_k$, hence
 \begin{equation}
     \lim_{x \to x_0(\sigma, a)}(x, u_{\sigma, a}(x)) \in (\alpha_k, \infty) \times (\beta_{n,k}, \infty).
 \end{equation}
    Then in view of the ODE \eqref{Main ODE y = u(x)}, one can strictly extend the graphical solution to parts of $\alpha_k < x < x_0(\sigma, a)$, and still retain $u_{\sigma, a} > \beta_{n,k}$. This contradicts the minimality of $x_0(\sigma, a)$ given by \eqref{definition ofx0(sigma, a)}. \\

  Discussions so far implies that we have
    \begin{equation}\label{apriori estimate for trumpets1}
        \max(\sigma x, \beta_{n,k}) \leq u_{\sigma, a}(x) <\sigma_{n,k}x, \ 0 \leq u_{\sigma, a}'(x) \leq \sigma, \  \frac{u_{\sigma, a}''}{1 + (u_{\sigma, a}')^2} \geq 0
    \end{equation}
    for $x \in (x_0(\sigma, a), a]$, and
    \begin{equation}
        \alpha_k \leq x_0(\sigma, a) \leq \frac{\beta_{n,k}}{\sigma}, \ u_{\sigma, a}(x_0(\sigma, a)) = \beta_{n,k}.
    \end{equation}
    Then one can find $a_j \to \infty$ so that
    $$x_0(\sigma, a_j) \to x_{s}(\sigma) \in [\alpha_k, \frac{\beta_{n,k}}{\sigma}]$$ and 
    $$u_{\sigma, a_j} \to u_{\sigma} \text{ in }C^2_{loc}((x_{s}(\sigma), \infty)).$$
    We thus see that $u_{\sigma}$ solves \eqref{Main ODE y = u(x)}. Also, by \eqref{apriori estimate for trumpets1}, $u_{\sigma}$ satisfies
   \begin{equation}\label{final apriori estimates for trumpets}
       \max(\beta_{n,k}, \sigma x) \leq u_{\sigma}(x) \leq \sigma_{n,k}x, \   0 \leq u_{\sigma}'(x) \leq \sigma, \  \frac{u_{\sigma}''}{1 + (u_{\sigma}')^2}\geq 0
   \end{equation}
   for all $x \in (x_{s}(\sigma), \infty)$. \\
   
   To obtain boundary information, first note that \eqref{final apriori estimates for trumpets} implies that $u_{\sigma}$ extends as a $C^1$ function up to $x = x_{s}(\sigma)$ with $u_{\sigma}(x_{s}(\sigma)) \geq \beta_{n,k}$. We claim that 
   $$u_{\sigma}(x_{s}(\sigma)) = \beta_{n,k}$$
   If $u_{\sigma}(x_{s}(\sigma)) > \beta_{n,k}$, then by $0 \leq u_{\sigma}'(x) \leq \sigma$, we can find some small $\delta > 0$ so that 
   $u_{\sigma}(x) \geq \beta_{n,k} + \delta$
for all $x \geq x_s(\sigma)$. On the other hand, since $x_0(\sigma, a_j) \to x_{s}(\sigma)$ with $u_{\sigma, a}(x_0(\sigma, a)) = \beta_{n,k}$ and $0 \leq u_{\sigma, a}'(x) \leq \sigma$, we have for all sufficiently large $j$, 
\begin{equation}
    u_{\sigma, a_j}(x_{s}(\sigma) + \frac{\delta}{100\sigma}) \leq \beta_{n,k} + \frac{\delta}{50}
\end{equation}
which contradicts $u_{\sigma, a_j} \to u_{\sigma}$ in $C^2_{loc}((x_s(\sigma), \infty))$.
This proves item (i), (ii) in proposition \ref{Existence of trumpets}. \\

We now prove item (iii). Convexity is immediate from \eqref{final apriori estimates for trumpets}.
To prove that $u_{\sigma}$ is strictly increasing in $x$, 
we first show that 
\begin{equation}
    u_{\sigma}(x) > \beta_{n,k}
\end{equation}
for all $x > x_s(\sigma)$. Suppose this is not the case, and there exists $x_1 > x_s(\sigma)$ so that $u_{\sigma}(x_1) = \beta_{n,k}$. Then by \eqref{final apriori estimates for trumpets}, we must have 
\begin{equation}
    u_{\sigma}(x_s(\sigma)) = \beta_{n,k}, \ u_{\sigma}'(x_s(\sigma)) = 0.
\end{equation}
By uniqueness of solutions to initial value problems, this implies that 
\begin{equation}
    u_{\sigma}(x) \equiv \beta_{n,k}
\end{equation}
for all $x \geq x_s(\sigma)$, which contradicts \eqref{final apriori estimates for trumpets}. Once we have $u_{\sigma}(x) > \beta_{n,k}$ for all $x > x_s(\sigma)$, we can use \eqref{final apriori estimates for trumpets} to obtain
\begin{equation}
    u_{\sigma}'(x) \geq (\frac{x}{2} - \frac{k-1}{x})^{-1}(\frac{u_{\sigma}(x)}{2} - \frac{n-k}{u_{\sigma}(x)}) > 0,
\end{equation}
which implies that $u_{\sigma}$ is strictly increasing. \\

To show the desired asymptotics of $u_{\sigma}$, we go back to the approximate solutions $u_{\sigma, a}$. By following the proof of lemma 2 of \cite{KM}, each $u_{\sigma, a}$ satisfies the integral identity
    $$u_{\sigma, a}(x) = \sigma x + x\int_{x}^{a}\frac{1}{t^2}\int_{t}^{a}(\frac{(n-k)s}{u_{\sigma, a}} - (k-1)u_{\sigma, a}')(1 + (u_{\sigma, a}')^2)e^{-\frac{1}{2}\int_{t}^{s}z(1 + (u_{\sigma, a}')^2)dz}dsdt.$$
     \eqref{apriori estimate for trumpets1} implies that above integral identity also passes through the limit and thus
 \begin{align*}
     u_{\sigma}(x) = \sigma x + x\int_{x}^{\infty}\frac{1}{t^2}\int_{t}^{\infty}(\frac{(n-k)s}{u_{\sigma}} - (k-1)u_{\sigma}')(1 + (u_{\sigma}')^2)e^{-\frac{1}{2}\int_{t}^{s}z(1 + (u_{\sigma}')^2)dz}dsdt
 \end{align*}
    holds for all $x\in [x_{s}(\sigma), \infty)$. Then by using estimates \eqref{final apriori estimates for trumpets}, we have 
 \begin{align*}
     |u_{\sigma}(x) - \sigma x| \leq x\int_{x}^{\infty}\frac{1}{t^2}\int_{t}^{\infty}\frac{n-k}{\sigma}(1 + \frac{n-k}{k-1})e^{-1/4(s^2 - t^2)}dsdt \leq \frac{c(n,k)}{\sigma x}.
 \end{align*}
    Similar calculations yield 
    \begin{equation}
        |u_{\sigma}'(x) - \sigma| \leq \frac{c(n,k)}{\sigma x^2}.
    \end{equation}
    This completes the proof of proposition \ref{Existence of trumpets}.
\end{proof}
\begin{remark}\label{smooth dependence of trumpets on slope}
    The asymptotics of $u_{\sigma}$ in (iii) of proposition \ref{Existence of trumpets} is exactly the same as the $C^1$-estimates for Kleene-M\o ller trumpets given in lemma 3 of \cite{KM}. Also, by following the calculations of lemma 4 in \cite{KM}, we see that $u_{\sigma}$ can be obtained as a fixed point of a contraction map given by the integral identity in a suitable Banach space at least for large $x$.  This implies that $u_{\sigma}$ depends smoothly on $\sigma$.
\end{remark}
We now turn to obtaining a more precise formula for $u_{\sigma}$. 
In (iii) of proposition \ref{Existence of trumpets}, we obtained a $C^0$-estimate which is satisfactory for large enough $x$ for each fixed $\sigma$. However, it is not desirable when analyzing how the `trumpets' behave for different values of $\sigma$ at each $x$ due to the $\frac{1}{\sigma}$ in the right hand side. In this section, we prove an alternative $C^0$-estimate which is more useful in this perspective. The idea of the proof is to obtain a pointwise estimate for $u_{\sigma}'$ which can be integrated in $x$ to obtain the desired $C^0$-estimate. 
\begin{lemma}\label{Alternative $C^0$ for trumpets}
For each $2 \leq k \leq n-1$, there exists $l_0 = l_0(n,k) > \alpha_k$, and $C = C(n,k) > 0$ so that 
   \begin{align*}
       \max(\sigma x, \sqrt{\beta_{n,k}^2 + \sigma^2e^{-C/x^2}(x^2 - \alpha_k^2})) \leq u_{\sigma}(x) \leq  \sqrt{\beta_{n,k}^2 + \sigma^2(x^2 - \alpha_k^2)}
   \end{align*}
for all $\sigma \in (0, \sigma_{n,k})$, $x \in [\max(x_{s}(\sigma),l_0(n,k)), \infty)$.
\end{lemma}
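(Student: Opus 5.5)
Consider the quantity $w(x) = u_\sigma(x)^2 - \beta_{n,k}^2$ and compare it with $\sigma^2(x^2-\alpha_k^2)$. The lower bound $\sigma x \le u_\sigma$ is already contained in \eqref{final apriori estimates for trumpets}. The remaining bounds should come from a pointwise estimate on $u_\sigma'$, which we then integrate.

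The starting point is the ODE \eqref{Main ODE y = u(x)} together with convexity $u_\sigma''\ge 0$ (Proposition \ref{Existence of trumpets}). For $x>\alpha_k$ this gives
\begin{equation*}
\Big(\frac{x}{2}-\frac{k-1}{x}\Big)u_\sigma' \ \ge\ \frac{u_\sigma}{2}-\frac{n-k}{u_\sigma},
\qquad\text{i.e.}\qquad
\frac{u_\sigma u_\sigma'}{u_\sigma^2-\beta_{n,k}^2}\ \ge\ \frac{x}{x^2-\alpha_k^2}.
\end{equation*}
Hence $\frac{d}{dx}\log\frac{u_\sigma^2-\beta_{n,k}^2}{x^2-\alpha_k^2}\ge 0$, so $Q(x):=\frac{u_\sigma^2-\beta_{n,k}^2}{x^2-\alpha_k^2}$ is nondecreasing. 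The asymptotics \eqref{asymptotics of trumpets} give $Q(x)\to\sigma^2$ as $x\to\infty$. Therefore $Q\le\sigma^2$, which is exactly the upper bound. This holds for all $x>\max(x_s(\sigma),\alpha_k)$ and needs no choice of $l_0$.

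For the lower bound we need $Q(x)\ge \sigma^2 e^{-C/x^2}$. Write $\log Q(\infty)-\log Q(x)=\int_x^\infty (\log Q)'\,dt$. It then suffices to show
\begin{equation*}
(\log Q)'(t)\le \frac{2C}{t^3},\qquad\text{since}\qquad \int_x^\infty \frac{2C}{t^3}\,dt=\frac{C}{x^2}.
\end{equation*}
Using the ODE exactly,
\begin{equation*}
(\log Q)' = \frac{2u_\sigma''}{(1+(u_\sigma')^2)}\cdot\frac{u_\sigma}{(u_\sigma^2-\beta_{n,k}^2)}\cdot\frac{x}{x^2-\alpha_k^2}\cdot\frac{2}{x}\cdot\frac{x}{1}\ \ \text{(schematically)},
\end{equation*}
that is, $(\log Q)'\approx \frac{4 u_\sigma''}{(1+u_\sigma'^2)\,x\,u_\sigma}$ up to factors of size $1+O(1/x^2)$, valid once $x\ge l_0$ and $u_\sigma^2\ge 2\beta_{n,k}^2$ or similar. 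So the task reduces to a \emph{$\sigma$-uniform} bound of the form
\begin{equation*}
\frac{u_\sigma''}{1+(u_\sigma')^2}\ \lesssim\ \frac{u_\sigma}{x^2}\qquad (\text{we need } u_\sigma''\lesssim \sigma/x^2 \text{ in effect}).
\end{equation*}
The target really is $u''\lesssim u/x^3$ after accounting for the factor $x u$. I expect this to be the main obstacle, because the bound from Proposition \ref{Existence of trumpets}(iii) carries a bad factor $1/\sigma$.

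To get the uniform bound I would differentiate the ODE, or equivalently use the identity for $\phi$ computed in the proof of Proposition \ref{Existence of trumpets}. With $\phi=\frac{u''}{1+u'^2}$ one has
\begin{equation*}
\phi' = \Big(\frac{x}{2}-\frac{k-1}{x}\Big)u''+\Big[\frac{k-1}{x^2}-\frac{n-k}{u^2}\Big]u'.
\end{equation*}
This can be read as a first-order linear equation $\phi' - \tfrac{x}{2}(1+u'^2)(1+O(x^{-2}))\phi = g$, with $|g|\le \frac{(k-1)u'}{x^2}\le \frac{(k-1)\sigma}{x^2}$. Since $\phi\ge 0$ and $\phi$ is bounded at infinity, we can integrate from $\infty$ with the decaying integrating factor $e^{-\int x/2}$. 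This yields
\begin{equation*}
0\le \phi(x)\le \int_x^\infty \frac{C\sigma}{s^2}e^{-\frac14(s^2-x^2)}ds\le \frac{C\sigma}{x^3}.
\end{equation*}
The point is that $g$ is controlled by $\sigma$ through $u'\le\sigma$, not by $1/\sigma$.

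Plugging this into the formula for $(\log Q)'$, and using $u_\sigma\ge\sigma x$ and $u_\sigma\ge\beta_{n,k}$ to control the remaining factors, gives $(\log Q)'\le C/x^3$ for $x\ge l_0(n,k)$. The choice of $l_0$ absorbs the factor $x^2/(x^2-\alpha_k^2)$ and the condition $u^2-\beta^2\gtrsim u^2$. This last condition needs care near $u\approx\beta_{n,k}$; if it fails, I would instead bound $u u'/(u^2-\beta^2)$ directly via the monotonicity already proved. Integrating then gives the lower bound, and combining it with $\sigma x\le u_\sigma$ completes the lemma.
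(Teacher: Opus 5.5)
Your upper bound (monotonicity of $Q$ together with $Q\to\sigma^2$) is correct and is exactly the paper's argument. The lower bound, however, has a genuine gap, and it shows up in two places.

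\textbf{The bound on $\phi$ is false.} The source term in your linear equation for $\phi=u_\sigma''/(1+(u_\sigma')^2)$ is $g=\bigl[\frac{k-1}{x^2}-\frac{n-k}{u_\sigma^2}\bigr]u_\sigma'$, and you have dropped its dominant part. Since $u_\sigma\le\sigma_{n,k}x$, the bracket is $\le 0$, and near the cylinder ($u_\sigma\approx\beta_{n,k}$) it is close to $-\frac12$. So $|g|$ is of order $u_\sigma'$, not $(k-1)u_\sigma'/x^2$. Integrating from infinity with $u_\sigma'\le\sigma$ and $u_\sigma\ge\beta_{n,k}$ then gives only $\phi(x)\lesssim\sigma/x$. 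The claimed $\phi\le C\sigma/x^3$ is in fact false. Where $\sigma x\ll 1$ one has $u_\sigma\approx\beta_{n,k}$ and $\phi\approx\sigma^2/\beta_{n,k}$; this can be checked on the model profile $\sqrt{\beta_{n,k}^2+\sigma^2(x^2-\alpha_k^2)}$, and it also follows from the lemma itself. This exceeds $C\sigma/x^3$ when, for example, $\sigma=x^{-2}$.

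\textbf{The conversion to $(\log Q)'$ does not close.} More fundamentally, any bound on $\phi$ has to pass through the exact identity
\[
(\log Q)'=\frac{4x\,u_\sigma\,\phi}{(x^2-\alpha_k^2)(u_\sigma^2-\beta_{n,k}^2)} .
\]
Here $u_\sigma^2-\beta_{n,k}^2$ has no $\sigma$-uniform lower bound: it is roughly $\sigma^2x^2$, the very quantity you are trying to bound below. The regime $u_\sigma\approx\beta_{n,k}$ is not a boundary nuisance; for small $\sigma$ it covers all $x\lesssim\beta_{n,k}/\sigma$. Even your stated bound would only give $(\log Q)'\lesssim\beta_{n,k}/(\sigma x^6)$ there, which is not $O(x^{-3})$ uniformly in $\sigma$.

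Your fallback goes the wrong way. Monotonicity of $Q$ gives $\frac{u_\sigma u_\sigma'}{u_\sigma^2-\beta_{n,k}^2}\ge\frac{x}{x^2-\alpha_k^2}$, whereas you need the reverse inequality up to a factor $1+C/x^2$. That reverse inequality is the whole content of the lemma.

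\textbf{What the paper does instead.} The paper works directly with the ratio
\[
w_\sigma=\frac{x^2-\alpha_k^2}{2x}\frac{d}{dx}\ln(u_\sigma^2-\beta_{n,k}^2)=1+\frac{2u_\sigma\phi}{u_\sigma^2-\beta_{n,k}^2}.
\]
Its ODE
\[
\Bigl(\frac{x}{2}-\frac{k-1}{x}\Bigr)w_\sigma'=\Bigl(\frac12+\frac{k-1}{x^2}\Bigr)w_\sigma-\Bigl(\frac12+\frac{n-k}{u_\sigma^2}\Bigr)w_\sigma^2+\Bigl(\frac{x}{2}-\frac{k-1}{x}\Bigr)^2(1+(u_\sigma')^2)(w_\sigma-1)
\]
has coefficients bounded independently of $\sigma$, because $u_\sigma\ge\beta_{n,k}$. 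Evaluating at the last point where $w_\sigma=1+\delta$ (using $w_\sigma\to1$ at infinity) gives $\delta x^2/9\le(1+\delta)^2$. Hence $w_\sigma\le 1+72/x^2$, which integrates to the lower bound.

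\textbf{A possible repair of your route.} Keep the correct source bound $-g\le\frac{(n-k)u_\sigma'}{u_\sigma^2}$. Then substitute $u_\sigma'=w_\sigma\frac{u_\sigma^2-\beta_{n,k}^2}{2u_\sigma}\bigl(\frac{x}{2}-\frac{k-1}{x}\bigr)^{-1}$. After this, $w_\sigma(x)-1$ is bounded by an integral involving only $w_\sigma$ on $[x,\infty)$ and the ratios $\frac{u_\sigma(s)^2-\beta_{n,k}^2}{u_\sigma(x)^2-\beta_{n,k}^2}$. A bootstrap assumption $w_\sigma\le 2$ controls these ratios and closes the argument. As written, however, your proposal does not yield a $\sigma$-uniform estimate.
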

We first mention two immediate consequences of lemma \ref{Alternative $C^0$ for trumpets}. The first corollary gives a uniform upper bound for $x_{s}(\sigma) > 0$ independent of $\sigma$.
\begin{corollary}\label{Upper bound of starting point for trumpets}
    For $l_0 = l_0(n,k)$ given by lemma \ref{Alternative $C^0$ for trumpets}, $x_{s}(\sigma) < l_0$ for all $\sigma \in (0, \sigma_{n,k})$. In other words, the `trumpets' can be extended all the way up to some $l_0$ which is independent of $\sigma$.
\end{corollary}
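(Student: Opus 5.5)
We argue by contradiction, using the lower bound in Lemma \ref{Alternative $C^0$ for trumpets}. Fix $\sigma \in (0,\sigma_{n,k})$ and suppose $x_s(\sigma) \geq l_0$. Then $\max(x_s(\sigma), l_0) = x_s(\sigma)$, so the lemma applies at the endpoint $x = x_s(\sigma)$. By Proposition \ref{Existence of trumpets}(i) we have $u_\sigma(x_s(\sigma)) = \beta_{n,k}$. The lower bound of the lemma then gives
\begin{equation*}
\beta_{n,k}^2 = u_\sigma(x_s(\sigma))^2 \geq \beta_{n,k}^2 + \sigma^2 e^{-C/x_s(\sigma)^2}\bigl(x_s(\sigma)^2 - \alpha_k^2\bigr).
\end{equation*}
Since $x_s(\sigma) \geq l_0 > \alpha_k$ and $\sigma > 0$, the added term is strictly positive. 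This is a contradiction, so $x_s(\sigma) < l_0$.

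Two small points should be checked. First, the lemma's estimate must hold at the endpoint itself, not only for $x > x_s(\sigma)$. The lemma is stated on the closed interval $[\max(x_s(\sigma), l_0), \infty)$, so this is immediate. If one only had the estimate for $x > x_s(\sigma)$, it would extend to the endpoint because $u_\sigma$ is continuous up to $x_s(\sigma)$ by Proposition \ref{Existence of trumpets}(ii). Second, $l_0 > \alpha_k$ is exactly what makes $x^2 - \alpha_k^2$ strictly positive, so the strict inequality survives.

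As an alternative route, the bound $u_\sigma \geq \sigma x$ also gives $x_s(\sigma) \leq \beta_{n,k}/\sigma$. That bound is not uniform in $\sigma$, which is precisely why the lemma is needed. There is no real obstacle here: all the work lies in Lemma \ref{Alternative $C^0$ for trumpets}, which we are assuming.

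For the phrase ``extended all the way up to $l_0$'': the solution $u_\sigma$ is defined on $[x_s(\sigma), \infty) \supset [l_0, \infty)$. Hence every trumpet is graphical over $x \geq l_0$ for all $\sigma$ simultaneously.
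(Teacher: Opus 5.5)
Your proof is correct and is essentially the paper's argument. Assuming $x_s(\sigma)\ge l_0$, you evaluate the lemma's lower bound at $x=x_s(\sigma)$, where $u_\sigma=\beta_{n,k}$, and use $l_0>\alpha_k$ to make the added term $\sigma^2e^{-C/x_s(\sigma)^2}(x_s(\sigma)^2-\alpha_k^2)$ strictly positive, which gives the contradiction.
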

\begin{proof}[Proof of corollary \ref{Upper bound of starting point for trumpets}]
   If $\alpha_k < l_0 \leq x_s(\sigma)$ for some $\sigma \in (0, \sigma_{n,k})$, then by proposition \ref{Existence of trumpets} and proposition \ref{Alternative $C^0$ for trumpets}, we must have
   \begin{equation}
        \beta_{n,k} < \sqrt{\beta^2_{n,k} + \sigma^2e^{-C/x_s(\sigma)^2}(x_s(\sigma)^2 - \alpha_k^2})\leq u_{\sigma}(x_s(\sigma)) = \beta_{n,k}
   \end{equation}
   which is a contradiction.
\end{proof}
The second corollary implies that the family of `trumpets' indeed sweep the region between a generalized cylinder $\mathbf{R}^k\times \mathbf{S}^{n-k}(\sqrt{2(n-k)})$, and the $O(k)\times O(n-k+1)$-symmetric minimal cone outside some compact set containing the origin.
\begin{corollary}\label{Sweeping out by trumpets}
For any $(x, y) \in (l_0, \infty) \times (\beta_{n,k}, \infty)$ with $y < \sigma_{n,k}x$, there exists $\sigma \in (0, \sigma_{n,k})$ so that
    $$y = u_{\sigma}(x).$$
    Here, $l_0 > 0$ is the constant from corollary \ref{Upper bound of starting point for trumpets}. 
\end{corollary}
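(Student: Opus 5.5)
Fix $(x,y)$ with $x>l_0$ and $\beta_{n,k}<y<\sigma_{n,k}x$. The idea is an intermediate value argument in $\sigma$: the point $x$ lies in the domain of every trumpet, the value $u_\sigma(x)$ moves continuously with $\sigma$, and the bounds of Lemma \ref{Alternative $C^0$ for trumpets} squeeze it to $\beta_{n,k}$ as $\sigma\to 0$ and to $\sigma_{n,k}x$ as $\sigma\to\sigma_{n,k}$.

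First, by Corollary \ref{Upper bound of starting point for trumpets}, $x_s(\sigma)<l_0<x$ for every $\sigma\in(0,\sigma_{n,k})$. So $u_\sigma(x)$ is defined for all such $\sigma$, and $x\ge\max(x_s(\sigma),l_0)$, which means the estimates of Lemma \ref{Alternative $C^0$ for trumpets} apply at this $x$. Define
\[
F(\sigma)=u_\sigma(x), \qquad \sigma\in(0,\sigma_{n,k}).
\]

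Second, I need $F$ to be continuous in $\sigma$. Remark \ref{smooth dependence of trumpets on slope} gives smooth dependence on $\sigma$ for large $x$, say on $[R,\infty)$, via the contraction-map construction. Continuity then transfers to our fixed $x$ by continuous dependence of the ODE \eqref{Main ODE y = u(x)} on initial data at $x=R$. The solutions stay in the region $\{x>\alpha_k,\ u\ge\beta_{n,k}\}$, where the equation is regular, and the trajectories for $\sigma$ near $\sigma_0$ remain defined on $[x,R]$ because they exist down to $x_s<l_0$ with uniform $C^1$ bounds. This is the main obstacle, since the remark is only stated for large $x$.

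Third, I identify the limits at the endpoints. As $\sigma\to 0$, the upper bound $F(\sigma)\le\sqrt{\beta_{n,k}^2+\sigma^2(x^2-\alpha_k^2)}$ tends to $\beta_{n,k}$. Since $y>\beta_{n,k}$, there is $\sigma_1$ with $F(\sigma_1)<y$. As $\sigma\to\sigma_{n,k}$, the lower bound $F(\sigma)\ge\sigma x$ tends to $\sigma_{n,k}x$. Since $y<\sigma_{n,k}x$, there is $\sigma_2$ with $F(\sigma_2)>y$. The intermediate value theorem on the interval between $\sigma_1$ and $\sigma_2$ then gives $\sigma$ with $u_\sigma(x)=y$.

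If the continuity step resists, I would instead argue by compactness. Take $\sigma_j\to\sigma_0$; the uniform bounds \eqref{final apriori estimates for trumpets} together with the ODE give $C^2_{loc}$ subsequential convergence on $(l_0,\infty)$ to a solution $\tilde u$. The limit inherits the asymptotics \eqref{asymptotics of trumpets} with slope $\sigma_0$, and it satisfies the same integral identity. By the uniqueness of that fixed point for large $x$, $\tilde u=u_{\sigma_0}$ there, hence everywhere by ODE uniqueness. So the whole sequence converges and $F$ is continuous.
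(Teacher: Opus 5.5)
Your proposal is correct and follows the paper's proof. The paper also fixes $\sigma_0<\sigma_1$ with $u_{\sigma_0}(x)\le\sqrt{\beta_{n,k}^2+\sigma_0^2(x^2-\alpha_k^2)}<y<\sigma_1 x\le u_{\sigma_1}(x)$, then applies the intermediate value theorem using $x_s(\sigma)<l_0$ and the smooth dependence on $\sigma$ from Remark \ref{smooth dependence of trumpets on slope}; your extra step carrying continuity from large $x$ down to the fixed $x$ via continuous dependence on initial data fills in a detail the paper leaves implicit.
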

\begin{proof}[Proof of corollary \ref{Sweeping out by trumpets}]
    One can find $0 < \sigma_0 < \sigma_1 < \sigma_{n,k}$ so that
    $$ u_{\sigma_0}(x) \leq\sqrt{\beta_{n,k}^2 +  \sigma_0^2x^2- \sigma_0^2\alpha_k^2} < y < \sigma_1x \leq u_{\sigma_1}(x)$$
    where the first and last inequalities are due to lemma \ref{Alternative $C^0$ for trumpets}. 
    Since $x_{s}(\sigma) < l$ for all $\sigma \in (0, \sigma_{n,k})$, remark \ref{smooth dependence of trumpets on slope} implies that $\sigma \to u_{\sigma}(x)$ is smooth, hence by intermediate value theorem, one can find $\sigma \in (0, \sigma_{n,k})$ so that $$y=u_{\sigma}(x).$$
\end{proof}
\begin{proof}[Proof of lemma \ref{Alternative $C^0$ for trumpets}]
    By continuity of $u_{\sigma}$ with respect to $x$, it is enough to consider when $x > \max(x_{s}(\sigma), l_0)$ for $l_0 > \alpha_k$ to be determined later. Recall item (iii) in proposition \ref{Existence of trumpets}, we have $u''_{\sigma}(x) \geq 0$ for all $x \in (x_{s}(\sigma), \infty)$. By using \eqref{Main ODE y = u(x)}, this implies
\begin{equation}
    \frac{d}{dx}\ln(u_{\sigma}^2 - \beta_{n,k}^2) \geq \frac{d}{dx}\ln(x^2 - \alpha_k^2).
\end{equation}
   Integrating from each $x \in (x_{s}(\sigma), \infty)$ to any large $a > x$, and letting $a \to \infty$ (here, we use the asymptotic formula in item (iii) in proposition \ref{Existence of trumpets}) gives us the desired upper bound
    \begin{equation}
        u_{\sigma}(x) \leq \sqrt{\beta_{n,k}^2 + \sigma^2(x^2 - \alpha_k^2)}.
    \end{equation}
    We now focus on the lower bound. Since $\sigma x \leq u_{\sigma}(x)$ is proved in \eqref{final apriori estimates for trumpets}, we only need to show
    \begin{equation}
        u_{\sigma}^2(x) \geq \beta_{n,k}^2 + \sigma^2e^{-C/x^2}(x^2 - \alpha_k^2).
    \end{equation}
    The proof is motivated by proof of proposition 8.10 of \cite{ADS}. For each $\sigma \in (0, \sigma_{n,k})$, define
\begin{equation}
    w(x) = w_{\sigma}(x) = (\frac{x^2 - \alpha_k^2}{2x})\frac{d}{dx}\ln(u_{\sigma}^2(x) - \beta_{n,k}^2).
\end{equation}
Then by direct computation using \eqref{Main ODE y = u(x)}, $w_{\sigma}$ solves the equation
\begin{equation}\label{ODE for w in trumpet}
    (\frac{x}{2} - \frac{k-1}{x})w_{\sigma}'(x) = Lw_{\sigma}
\end{equation}
with
\begin{align*}
     Lw_{\sigma} = (\frac{1}{2} + \frac{k-1}{x^2})w_{\sigma} - (\frac{1}{2} + \frac{n-k}{u_{\sigma}^2})w_{\sigma}^2 + (\frac{x}{2} - \frac{k-1}{x})^2(1 + (u_{\sigma}')^2)(w_{\sigma} - 1).
\end{align*}
We now obtain an estimate for $w(x)$.
\begin{lemma}\label{Derivative bound for trumpets}
    There exists $l_0 = l_0(n, k) > 0$ so that
    \begin{equation}
        1 \leq w_{\sigma}(x) \leq 1 + \frac{72}{x^2}
    \end{equation}
    for all $x >\max(x_0, x_{s}(\sigma))$.
\end{lemma}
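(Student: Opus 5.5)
The lower bound is a reformulation of convexity, and the upper bound comes from a one-sided barrier argument for the first-order equation \eqref{ODE for w in trumpet}. Throughout, take $x>\alpha_k$, so that the coefficient $\frac{x}{2}-\frac{k-1}{x}$ is positive.

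\textbf{Lower bound.} By item (iii) of proposition \ref{Existence of trumpets}, $\phi_\sigma=\frac{u_\sigma''}{1+(u_\sigma')^2}\geq 0$. Rearranging $\phi_\sigma\geq 0$ gives
\begin{equation*}
u_\sigma'\geq \frac{x(u_\sigma^2-\beta_{n,k}^2)}{u_\sigma(x^2-\alpha_k^2)}.
\end{equation*}
Since $w_\sigma=\frac{(x^2-\alpha_k^2)u_\sigma u_\sigma'}{x(u_\sigma^2-\beta_{n,k}^2)}$ and $u_\sigma>\beta_{n,k}$ for $x>x_s(\sigma)$, this is exactly $w_\sigma\geq 1$.

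\textbf{Behaviour at infinity.} For fixed $\sigma$, the asymptotics \eqref{asymptotics of trumpets} give $u_\sigma=\sigma x+O(1/x)$ and $u_\sigma'=\sigma+O(1/x^2)$. Substituting into the formula for $w_\sigma$ shows $w_\sigma(x)\to 1$ as $x\to\infty$. This is the only information at infinity I will use. No uniformity in $\sigma$ is needed, which matters because the error terms carry factors of $1/\sigma$.

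\textbf{Sign of $L$.} Write $A(x)=(\frac{x}{2}-\frac{k-1}{x})^2(1+(u_\sigma')^2)\geq (\frac{x}{2}-\frac{k-1}{x})^2$, and regard $L$ as a function $L(x,w)$ of $w$.
\begin{itemize}
\item At $w=1$: $L(x,1)=\frac{k-1}{x^2}-\frac{n-k}{u_\sigma^2}\geq -\frac{n-k}{\beta_{n,k}^2}=-\frac12$.
\item On $w\in[1,2]$: $\partial_w L= \frac12+\frac{k-1}{x^2}-2(\frac12+\frac{n-k}{u_\sigma^2})w+A\geq A-3$, using $(n-k)/u_\sigma^2\leq \frac12$. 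Hence $\partial_w L>0$ once $x\geq l_0$ with $l_0$ large.
\end{itemize}
Consequently, for $w\in[1+72/x^2,\,2]$,
\begin{equation*}
L(x,w)\geq -\tfrac12+(A-3)\tfrac{72}{x^2}\geq -\tfrac12+18-O(x^{-2})>0,
\end{equation*}
after enlarging $l_0=l_0(n,k)$. The constant $72$ is chosen exactly so that $\frac{x^2}{4}\cdot\frac{72}{x^2}=18$ beats the $-\frac12$ coming from $w=1$.

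\textbf{Barrier argument.} Suppose $w_\sigma(x_1)>1+72/x_1^2$ for some $x_1>\max(l_0,x_s(\sigma))$. Set $c_0=\min(w_\sigma(x_1),2)>1+72/x_1^2$. For every $x\geq x_1$ and every level $c\in(1+72/x_1^2,\,c_0]$ we have $c\in[1+72/x^2,2]$, so $L(x,c)>0$. By \eqref{ODE for w in trumpet}, $w_\sigma'>0$ whenever $w_\sigma=c$. Hence $w_\sigma$ can never cross such a level downward, and $w_\sigma(x)\geq c_0>1$ for all $x\geq x_1$. This contradicts $w_\sigma\to 1$, so $w_\sigma\leq 1+72/x^2$ on $(\max(l_0,x_s(\sigma)),\infty)$.

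The main obstacle is the sign of $L$ just above $w=1$. One needs a uniform lower bound $-\frac12$ for the negative term, which is exactly where $u_\sigma\geq\beta_{n,k}$ enters, together with the large coefficient $A\sim x^2/4$, so that a $C/x^2$ perturbation of $1$ already makes $L$ positive uniformly in $\sigma$.
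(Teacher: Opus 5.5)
Your proof is correct and is essentially the paper's argument. The lower bound is the convexity $\phi_\sigma\ge 0$ rewritten. The upper bound is the same level-crossing argument for the $w_\sigma$-equation combined with $w_\sigma\to 1$ at infinity. The paper phrases it via the last point $p_0(\delta)$ where $w_\sigma=1+\delta$, uses the cruder bound $0\ge Lw_\sigma\ge -(1+\delta)^2+\frac{p_0^2}{9}\delta$ to get $p_0\le 6/\sqrt{\delta}$, and then sets $\delta=72/\bar x^2$; you instead linearize $L$ in $w$ about $w=1$.

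There is one harmless slip: on $w\in[1,2]$ you get $\partial_w L\ge A-\frac{7}{2}$, not $A-3$. This changes nothing, since $A\ge(\frac{x}{2}-\frac{k-1}{x})^2$ is of order $x^2/4$ uniformly in $\sigma$.
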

\begin{proof}[Proof of lemma \ref{Derivative bound for trumpets}]
 By $u_{\sigma}'' \geq 0$ and $u_{\sigma} > \beta_{n,k}$, we have the desired lower bound $w_{\sigma} \geq 1$. \\
 
 To derive the upper bound, we first choose $l_0$ so that
\begin{equation}\label{condition on x1}
    \frac{x}{2} - \frac{k-1}{x }\geq \frac{x}{3}
\end{equation}
for all $x \geq l_0$. We claim that for each $0 <\delta <1$
    \begin{equation}\label{axilary lemma}
        w_{\sigma}(x) \leq 1 + \delta
    \end{equation}
    for all $x > \max(x_{s}(\sigma), l_0,   \frac{6}{\sqrt{\delta}})$. Let 
    \begin{equation}
        p_0(\delta) = \sup\{x > \max(x_{s}(\sigma), l_0) \ | \ w_{\sigma}(x) > 1 + \delta \}.
    \end{equation}
    If the set is empry, then \eqref{axilary lemma} follows trivially, so we assume that the set is nonempty. In such case, the asymptotics of $u_{\sigma}$ given by proposition \ref{Existence of trumpets} implies that
    \begin{equation}
        \lim_{x \to \infty}w_{\sigma}(x) = 1,
    \end{equation}
    hence $p_0(\delta) < \infty$.  Then one has
   $$w_{\sigma}(p_0(\delta)) = 1 + \delta, \ \ w_{\sigma}'(p_{0}(\delta)) \leq 0.$$
   By evaluating \eqref{ODE for w in trumpet} at $x = p_0(\delta)$, and noting that $u_{\sigma} \geq \beta_{n,k}$, $p_0(\delta) \geq l_0$, we have 
   $$0 \geq -(1 + \delta)^2 + \frac{p_0(\delta)^2}{9}\delta$$
   which implies that
   \begin{equation}
       p_0(\delta) \leq \frac{6}{\sqrt{\delta}}.
   \end{equation}
   Thus if $x > \frac{6}{\sqrt{\delta}} \geq p_0(\delta)$, then
   $$w_{\sigma}(x) \leq 1 + \delta,$$
   which is precisely \eqref{axilary lemma}. \\

Lemma \eqref{axilary lemma} immediately follows. Indeed, by possibly enlarging $l_0$, we may assume that
\begin{equation}
    1 + \frac{72}{x^2} < 2
\end{equation}
for all $x \geq l_0$. If lemma \eqref{axilary lemma} is false, then there exists $\overline{x} \geq l_0$ so that
\begin{equation}
    w_{\sigma}(\overline{x}) > 1 + \frac{72}{\overline{x}^2}.
\end{equation}
Set $\delta = \frac{72}{\overline{x}^2} \in (0,1)$. Then we see that 
\begin{equation}
    w_{\sigma}(\frac{6\sqrt{2}}{\sqrt{\delta}}) > 1 + \delta
\end{equation}
which is a contradiction.
\end{proof}

We can now complete the proof of lemma \ref{Alternative $C^0$ for trumpets} by first integrating lemma \ref{Derivative bound for trumpets} from each $x > \max(x_{s}(\sigma), l_0)$ to any large $a > x$. This implies
\begin{equation}
    \frac{u_{\sigma}(a)^2 - \beta_{n,k}^2}{u_{\sigma}^2(x) - \beta_{n,k}^2} \leq \frac{a^2 - \alpha_k^2}{x^2 - \alpha_k^2}e^{C(1/x^2 - 1/a^2)}.
\end{equation}
Letting $a \to \infty$ and using the asymptotics of $u_{\sigma}$ given by proposition \ref{Existence of trumpets}, we obtain the desired lower bound
$$u_{\sigma}^2(x) \geq \beta_{n,k}^2 + \sigma^2e^{-C/x^2}(x^2 - \alpha_k^2)$$
for all $x > \max(l_0, x_{s}(\sigma))$. This completes the proof of lemma \ref{Alternative $C^0$ for trumpets}.
\end{proof}
\section{Existence of `disks' and its analysis}\label{section 4}
In this section, we prove the existence of `disks' in theorem \ref{Main theorem : existence of birotational caps}. This can be done by finding some continuous function $u : [x_{s}(a), a] \to \mathbf{R}_+$ which solves equation \eqref{Main ODE y = u(x)} in $(x_{s}(a), a)$, and satisfies
$$u(a) = 0 , \ \lim_{x \to a-}u'(x) = -\infty$$
for some $a > 0$. The strategy is the same as that in proposition \ref{Existence of trumpets}, except that we look at its inverse $x = v(y)$. We will achieve the desired $v$ by constructing a sequence of solutions to equation \eqref{Main ODE x = v(y)} with varying initial data. Then by showing that they obey some uniform estimates, we construct the desired self shrinker as a limit of the sequence of solutions. 
\begin{proposition}\label{Existence of caps}
    For each $2 \leq k \leq n-1$, $a > \alpha_k$, there exists $u_a : [x_{s}(a), a] \to \mathbf{R}_+$ so that\\\\
    (i) $x_{s}(a) \geq \alpha_k$ and $u_a(x_{s}(a)) = \beta_{n,k}$. \\
    (ii) $u_{a} \in C^0([x_{s}(a), a]) \cap C^{\infty}((x_{s}(a), a))$ solves equation \eqref{Main ODE y = u(x)} in $(x_{s}(a), a)$. Moreover, $u_a$ is concave, strictly decreasing and satisfy
\begin{equation}
    u_a(a) = 0, \ \lim_{x \to a-}u_{a}'(x) = -\infty.
\end{equation}
Hence, the surface $\Gamma^{n,k}_a$ given by
\begin{equation}
    [x_s(a), a]\times \mathbf{S}^{k-1}\times \mathbf{S}^{n-k} \ni (x, w_1, w_2) \to (xw_1, u_a(x)w_2) \in  \mathbf{R}^{n+1}
\end{equation}
is a self-shrinker.\\
(iii) 
We have a $C^0$-estimate
\begin{equation}\label{C0 estimate for caps in proposisition 4.1}
    \beta_{n,k}\sqrt{1 - \frac{x^2 - \alpha_k^2}{a^2 - \alpha_k^2}} \leq u_a \leq \beta_{n,k},
\end{equation}
and a $C^1$-estimate
\begin{equation}\label{C1 estimate for caps in proposisition 4.1}
     u_a'(x) \leq  (\frac{u_a(x)}{2} - \frac{\beta_{n,k}^2}{2u_a(x)})\frac{2x}{x^2 - \alpha_k^2}.
\end{equation}
\end{proposition}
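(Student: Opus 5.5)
We mirror the proof of Proposition \ref{Existence of trumpets}, this time working with the inverse graph $x = v(y)$ solving \eqref{Main ODE x = v(y)}. We shoot from the axis $y = 0$, where the disk should close up smoothly. For each small $\epsilon > 0$ we let $v_{a,\epsilon}$ solve \eqref{Main ODE x = v(y)} with initial data
\[
v_{a,\epsilon}(\epsilon) = a, \qquad v_{a,\epsilon}'(\epsilon) = 0 .
\]
The singular term $-\frac{n-k}{y}v'$ is why we start at $y=\epsilon$. Alternatively, one can solve the singular initial value problem at $y = 0$ directly by a standard fixed point argument, giving $v(0)=a$, $v'(0)=0$. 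We follow the solution up to the first time it leaves the region $\{\alpha_k \le x,\ 0<y<\beta_{n,k}\}$.

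The key a priori claim, analogous to \eqref{key apriori estimate of appx sol for trumpets1} and \eqref{key apriori estimate of appx sol for trumpets2}, is that along the solution we have $v' \le 0$ and
\[
\psi := \Big(\frac{y}{2}-\frac{n-k}{y}\Big)v' - \frac{v}{2} + \frac{k-1}{v} < 0 .
\]
Together these say that $v$ is decreasing and concave in $y$. In the region $v > \alpha_k$, $y < \beta_{n,k}$ the sign bookkeeping works as follows: $\frac{y}{2}-\frac{n-k}{y} < 0$ and $-\frac{v}{2}+\frac{k-1}{v}<0$. Near $y=0$, $v''(0)$ has the sign of $-\frac{a}{2}+\frac{k-1}{a}$ (with a factor from the singular term), and this is negative since $a > \alpha_k$.

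We run the same first-failure argument as in the proof of Proposition \ref{Existence of trumpets}. Suppose $\psi$ first vanishes at some point. Differentiating, $\psi' = \big[\frac{n-k}{y^2} - \frac{k-1}{v^2}\big]v'$ there. We need this to force a contradiction, which requires comparing $\frac{n-k}{y^2}$ with $\frac{k-1}{v^2}$; a barrier comparison with the ``cone'' $v/y = 1/\sigma_{n,k}$ should handle this, as in case \eqref{possible 2-2}. We also check that $v'$ cannot vanish again before $\psi$ does. The upshot is that the solution stays graphical over $y$, monotone and concave, until it hits either $v = \alpha_k$ or $y = \beta_{n,k}$. Since $v$ is decreasing and $v' \le 0$, the curve reaches $y = \beta_{n,k}$ at some $x_s(a) := v(\beta_{n,k}) \ge \alpha_k$; otherwise we would argue, as in the trumpet case, that the solution can be extended. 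Inverting gives $u_a = v^{-1}$ on $[x_s(a), a]$. Then $v'(0)=0$ gives $u_a'(a^-) = -\infty$, and $v$ decreasing and concave translates into $u_a$ strictly decreasing and concave. Finally we let $\epsilon \to 0$ using the uniform bounds, exactly as we passed $a\to\infty$ before.

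For (iii), $u_a \le \beta_{n,k}$ holds by construction. The $C^1$ estimate \eqref{C1 estimate for caps in proposisition 4.1} is just $u_a'' \le 0$ rewritten through \eqref{Main ODE y = u(x)}: $\phi \le 0$ gives $(\frac{x}{2}-\frac{k-1}{x})u_a' \le \frac{u_a}{2} - \frac{n-k}{u_a}$, and dividing by the positive factor $\frac{x^2-\alpha_k^2}{2x}$ yields exactly the stated inequality. For the lower $C^0$ bound, rewrite the estimate as
\[
\frac{d}{dx}\ln(\beta_{n,k}^2 - u_a^2) \ge \frac{d}{dx}\ln(x^2-\alpha_k^2)
\]
(the signs work because $u_a<\beta_{n,k}$ and $u_a'<0$), and integrate from $x$ to $a$, where $u_a(a)=0$. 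This gives
\[
\beta_{n,k}^2 - u_a^2(x) \le \beta_{n,k}^2\,\frac{x^2-\alpha_k^2}{a^2-\alpha_k^2},
\]
which is \eqref{C0 estimate for caps in proposisition 4.1}.

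The main obstacle is the a priori sign claim for $\psi$, specifically the case where $\psi$ first vanishes. There the sign of $\frac{n-k}{y^2}-\frac{k-1}{v^2}$ is not automatic. To control it we would first try to first establish $v/y > 1/\sigma_{n,k}$, i.e. that the curve stays on the far side of the minimal cone, via a separate first-touching argument.
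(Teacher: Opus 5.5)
Your route is the paper's: shoot $v_{a,\epsilon}$ from $y=\epsilon$ with $v(\epsilon)=a$, $v'(\epsilon)=0$, bootstrap the sign of $\psi$, let $\epsilon\to0$, invert, and integrate the $C^1$ bound to get the $C^0$ bound. Your derivation of (iii) from concavity is correct. The proof is incomplete in two places, though. You leave open the step you call the main obstacle. You also never rule out the exit through $v=\alpha_k$. The sentence ``since $v$ is decreasing \dots\ the curve reaches $y=\beta_{n,k}$'' does not follow. Monotonicity does not stop $v$ from dropping to $\alpha_k$ at some $\bar y<\beta_{n,k}$, and then item (i) ($x_s(a)\ge\alpha_k$, $u_a(x_s(a))=\beta_{n,k}$) is unproved.

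Both gaps close once $v>\alpha_k$ is part of the bootstrap claim, as in the paper, which bootstraps $\alpha_k<v\le a$ together with $\psi<0$.

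\emph{Case $\psi(\bar y)=0$.} No cone barrier is needed. For $\bar y<\beta_{n,k}$ and $v>\alpha_k$ one has $\frac{n-k}{\bar y^2}>\frac{n-k}{\beta_{n,k}^2}=\frac12=\frac{k-1}{\alpha_k^2}>\frac{k-1}{v^2}$. Solving $\psi=0$ gives $v'(\bar y)=(\frac{\bar y}{2}-\frac{n-k}{\bar y})^{-1}(\frac v2-\frac{k-1}{v})<0$. Hence $\psi'(\bar y)=[\frac{n-k}{\bar y^2}-\frac{k-1}{v^2}]v'(\bar y)<0$, which contradicts $\psi'(\bar y)\ge0$. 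Your cone condition $v/y>1/\sigma_{n,k}$ holds automatically in this region, since $\alpha_k/\beta_{n,k}=1/\sigma_{n,k}$.

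\emph{Case $v(\bar y)=\alpha_k$ with $\bar y<\beta_{n,k}$.} Suppose $\psi\le0$ on $[\epsilon,\bar y]$.
If $v'(\bar y)=0$, uniqueness forces $v\equiv\alpha_k$, contradicting $v(\epsilon)=a$.
If $v'(\bar y)<0$, then $-\frac{\alpha_k}{2}+\frac{k-1}{\alpha_k}=0$ gives $\psi(\bar y)=(\frac{\bar y}{2}-\frac{n-k}{\bar y})v'(\bar y)>0$, a contradiction.

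Two smaller omissions remain.
\begin{itemize}
\item To continue the solution up to $y=\beta_{n,k}$, and to get $v_a(0)=a$, $v_a'(0)=0$ in the limit, state the bound $(\frac a2-\frac{k-1}{a})(\frac y2-\frac{n-k}{y})^{-1}\le v'\le0$. It follows from $\psi<0$ and $v\le a$, and it is $O(y)$ near $0$.
\item After $\epsilon\to0$ only $v_a'\le0$ and $v_a''\le0$ survive. Strict monotonicity, which you need to invert to $u_a$, requires a short extra argument: $v_a$ cannot be a constant $>\alpha_k$ on an interval.
\end{itemize}
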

\begin{proof}[Proof of proposition \ref{Existence of caps}]
    The proof is similar to that of proposition \ref{Existence of trumpets}. It turns out that it is easier to find `disks' by viewing the profile curve as a graph over $y$ axis. For each small $\epsilon > 0$, we consider $v_{a, \epsilon}: [\epsilon, y_0(\epsilon, a)) \to \mathbf{R}_+$ which solves the initial value problem
    \begin{equation}
        \begin{cases}
            \frac{v_{a, \epsilon}''}{1 + (v_{a, \epsilon}')^2} = (\frac{y}{2} - \frac{n - k}{y})v_{a, \epsilon}' - \frac{v_{a, \epsilon}}{2} + \frac{k-1}{v_{a, \epsilon}} \\
            v_{a, \epsilon}(\epsilon) = a \ \ \ v_{a, \epsilon}'(\epsilon) = 0
        \end{cases}
    \end{equation}
with
\begin{equation}\label{definition of y0(epsilon, a)}
    y_0(\epsilon,a) = \sup \{y_0 \leq \beta_{n,k}\ | \ v_{a, \epsilon} \text{ is well defined in }[\epsilon, y_0) \}.
\end{equation}
 \textbf{Claim} For any $y \in [\epsilon, y_0(\epsilon, a))$,
 \begin{equation}\label{key apriori estimates for appx solution for disks}
     \alpha_k < v_{a, \epsilon}(y) \leq a, \ \phi_{a, \epsilon}(y) = (\frac{y}{2} - \frac{n-k}{y})v_{a, \epsilon}'(y) - \frac{v_{a, \epsilon}}{2} + \frac{k-1}{v_{a, \epsilon}} < 0
 \end{equation}
holds. \\

When $y = \epsilon$, the given initial data implies that \eqref{key apriori estimates for appx solution for disks} hold. Then by continuity of $v_{a, \epsilon}$, there exists small $\delta > 0$ so that 
$$\alpha_k < v_{a, \epsilon}(y), \ \phi_{a, \epsilon}(y) < 0$$
hold for $y \in [\epsilon, \epsilon + \delta]$. 
Then by equation \eqref{Main ODE x = v(y)}, we have \begin{equation}
    v_{a, \epsilon}''  = (1 + (v_{a, \epsilon}')^2)\phi_{a, \epsilon} < 0
\end{equation} for $y \in (\epsilon, \epsilon + \delta]$. Combining with $v_{a, \epsilon}'(\epsilon) = 0$, we have 
$$v_{a, \epsilon}' \leq 0$$
for $y \in [\epsilon, \epsilon + \delta]$. Thus combining with $v_{a, \epsilon}(\epsilon) = a$, we see that $v_{a, \epsilon}(y) \leq a$ for $y \in [\epsilon, \epsilon + \delta]$, hence \eqref{key apriori estimates for appx solution for disks} hold for all $y \in [\epsilon, \epsilon + \delta]$ for some small $\delta > 0$.\\

Let 
\begin{equation}\label{definition of overline y0(epsilon, a)}
    \overline{y}_0(\epsilon, a) = \sup\{y_0 < y_0(\epsilon, a)\ | \ \text{\eqref{key apriori estimates for appx solution for disks} hold for }y \in [\epsilon, y_0] \}. 
\end{equation}
Above discussion implies that $\overline{y}_0(\epsilon, a) \in (\epsilon, y_0(\epsilon, a)]$ is well defined. \\

To prove \eqref{key apriori estimates for appx solution for disks} for all $y \in [\epsilon, y_0(\epsilon, a))$, we need to show that $\overline{y}_0(\epsilon, a) = y_0(\epsilon, a)$. Suppose not, and assume $\overline{y}_0(\epsilon, a) < y_0(\epsilon, a)$. By the definition of $\overline{y}_0(\epsilon, a)$ given in \eqref{definition of overline y0(epsilon, a)}, we see that
\begin{equation}\label{property 1 for disks}
    \text{\eqref{key apriori estimates for appx solution for disks} hold for }y \in [\epsilon, \overline{y}_0(\epsilon, a)),
\end{equation}
and either
\begin{equation}\label{possible 2-1 for caps}
    v_{a, \epsilon}(\overline{y}_0) = a,
\end{equation}
or 
\begin{equation}\label{possible 2-2 for caps}
   v_{a, \epsilon}(\overline{y}_0) = \alpha_k, 
\end{equation}
or 
\begin{equation}\label{possible 2-3 for caps}
\phi_{a, \epsilon}(\overline{y}_0) = 0.    
\end{equation}

If \eqref{possible 2-1 for caps} holds, by mean value theorem, there exists $y_1 \in (\epsilon, \overline{y}_0(\epsilon, a)) $ so that $v_{a, \epsilon}'(y_1) = 0$. On the other hand, by \eqref{property 1 for disks} and equation \eqref{Main ODE x = v(y)}, we see that $v_{a, \epsilon}'$ is strictly decreasing for $y \in [\epsilon, \overline{y}_0(\epsilon,a))$. Combining with the initial condition, we see that
\begin{equation}
    0 = v_{a, \epsilon}'(y_1) < v_{a, \epsilon}'(\epsilon) = 0
\end{equation}
which is a contradiction.\\

Assume \eqref{possible 2-2 for caps} holds. Then by \eqref{property 1 for disks}, $v_{a, \epsilon}'(\overline{y}_0) \leq 0$. Actually, we have a strict inequality
\begin{equation}
    v_{a, \epsilon}'(\overline{y}_0) < 0.
\end{equation}
If not, then by the uniqueness of solution to initial value problem, we must have
 $$v_{a, \epsilon} = \alpha_k$$
 for $y \in [\epsilon, \overline{y}_0(\epsilon, a)]$ which contradicts $v_{a, \epsilon}(\epsilon) = a > \alpha_k$. 
 Then by $\overline{y}_0 < y_0(\epsilon, a) \leq \beta_{n,k}$,
\begin{equation}
    \phi_{a, \epsilon}(\overline{y}_0(\epsilon, a)) > (\frac{\overline{y}_0(\epsilon, a)}{2} - \frac{n-k}{\overline{y}_0(\epsilon, a)})\times 0 - \frac{\alpha_k{}}{2} + \frac{k-1}{\alpha_k} =0
\end{equation}
which contradicts \eqref{property 1 for disks} by continuity of $\phi_{a, \epsilon}$. Therefore, we see that 
\begin{equation}
    \alpha_k < v_{a, \epsilon}(y) \leq a
\end{equation}
holds for all $y \in [\epsilon, \overline{y}_0(\epsilon, a)]$.\\

Suppose \eqref{possible 2-3 for caps} holds. Then by \eqref{property 1 for disks}, $\phi_{a, \epsilon}(\overline{y}_0(\epsilon, a)) = 0$ and $\phi_{a, \epsilon}'(\overline{y}_0) \geq 0$. On the other hand, by solving $\phi_{a, \epsilon}(\overline{y}_0(\epsilon, a)) = 0$ for $v_{a, \epsilon}'(\overline{y}_0(\epsilon, a))$, and using the assumption $\overline{y}_0(\epsilon, a) < \beta_{n,k}$ and the fact that $v_{a, \epsilon}(\overline{y}_0(\epsilon, a)) > \alpha_k$, we have
$$v_{a, \epsilon}'(\overline{y}_0(\epsilon, a)) = (\frac{\overline{y}_0(\epsilon, a)}{2} - \frac{n-k}{\overline{y}_0(\epsilon, a)})^{-1}(\frac{v_{a, \epsilon}(\overline{y}_0(\epsilon, a))}{2} - \frac{k-1}{v_{a, \epsilon}(\overline{y}_0(\epsilon, a))}) < 0.$$
By using equation \eqref{Main ODE x = v(y)} to compute $\phi_{a, \epsilon}'(\overline{y}_0(\epsilon, a))$, and using $\overline{y}_0(\epsilon, a) < \beta_{n,k}$ and $v_{a, \epsilon}(\overline{y}_0(\epsilon, a)) > \alpha_k$, we see that
\begin{equation}
  \phi_{a, \epsilon} '(\overline{y}_0(\epsilon, a)) = (\frac{n - k}{\overline{y}_0(\epsilon, a)^2} - \frac{k-1}{v_{a, \epsilon}^2(\overline{y}_0(\epsilon, a))})v_{a, \epsilon}'(\overline{y}_0(\epsilon, a)) < 0  
\end{equation}
which is a contradiction. \\

Since all three possibilities lead to a contradiction, we must have
$$\overline{y}_0(\epsilon, a) = y_0(\epsilon, a),$$
thus proving the claim.\\

As a direct consequence, we have $y_0(\epsilon, a) = \beta_{n,k}$. By estimate \eqref{key apriori estimates for appx solution for disks}, 
we have 
\begin{equation}\label{a priori estimate for appx solution for caps}
    \alpha_k < v_{a, \epsilon}(y) \leq a, \ (\frac{a}{2} - \frac{k-1}{a})(\frac{y}{2} - \frac{n-k}{y})^{-1} \leq v_{a, \epsilon}'(y) \leq 0, \ v_{a, \epsilon}'' < 0
\end{equation}
for all $y \in [\epsilon, y_0(\epsilon, a))$. Thus
if $y_0(\epsilon, a) < \beta_{n,k}$, by estimates \eqref{a priori estimate for appx solution for caps}, we can slightly extend $v_{a, \epsilon}$ beyond $y_0(\epsilon, a)$, which contradicts the maximality of $y_0(\epsilon, a)$ given by \eqref{definition of y0(epsilon, a)}.\\

Also, we can obtain a better $C^0$-estimate by the derivative estimate in \eqref{a priori estimate for appx solution for caps}. By rearranging terms in the derivative estimate, we have 
$$ \frac{d}{dy}\ln(\beta_{n,k}^2 - y^2)< \frac{d}{dy}\ln(v_{a, \epsilon}^2 - \alpha_k^2).$$
By integrating from $\epsilon$ to any $\epsilon < y < y_0(\epsilon, a)$, and using the initial condition $v_{a, \epsilon}(\epsilon) = a$, we have
\begin{equation}\label{better c0 estimate for appx solution for cap}
     \alpha_k^2 + (a^2 - \alpha_k^2)\frac{\beta_{n,k}^2 - y^2}{\beta_{n,k}^2 - \epsilon^2}< v_{a, \epsilon}^2 \leq a^2.
\end{equation}

The estimates \eqref{a priori estimate for appx solution for caps} together with the fact that $y_0(\epsilon, a) = \beta_{n,k}$ allow us to find $\epsilon_j \to 0$ and obtain a $C^2_{loc}((0, \beta_{n,k}))$ limit denoted by $v_a$. This limit solves equation \eqref{Main ODE x = v(y)} for $y \in (0, \beta_{n,k})$. Also, the estimates \eqref{a priori estimate for appx solution for caps}, \eqref{better c0 estimate for appx solution for cap} passes through the limit, thus we have the $C^0$-estimate
\begin{equation}\label{final c0 estimate for cap}
    \sqrt{\alpha_k^2 + (a^2 - \alpha_k^2)(1 - \frac{y^2}{\beta_{n,k}^2})} \leq v_a(y) \leq a
\end{equation}
and the $C^1$-estimate
\begin{equation}\label{final c1 estimate for cap}
   (\frac{y}{2} - \frac{\beta_{n,k}^2}{2y})^{-1}(\frac{v_a}{2} - \frac{\alpha_k^2}{2v_a}) \leq v_a'(y) \leq 0 
\end{equation}
for all $y \in (0, \beta_{n,k})$. Above estimates imply that $v_a$ extend up to $y = 0$ in $C^1$ with 
\begin{equation}\label{initial condition of va cap}
    v_a(0) = a, \ v_a'(0) = 0
\end{equation}
and extends continuously up to $y = \beta_{n,k}$ in $C^0$ with $v_a(\beta_{n,k}) \geq \alpha_k$. \\

By \eqref{final c1 estimate for cap} and equation \eqref{Main ODE x = v(y)}, $v_a'' \leq 0$ and $v_a' \leq 0$. $v_a$ is strictly decreasing in $y$ because if this is not the case, then one can find some interval $[\alpha, \beta] \subset [0, \beta_{n,k}]$, so that $v_a$ is constant on that interval. By the estimate \eqref{final c0 estimate for cap}, that constant $u_{a}(\alpha)$ must be strictly larger than $\alpha_k$. However, equation \eqref{Main ODE x = v(y)} does not hold for constant function $x = u_{a}(\alpha) > \alpha_k$ which is a contradiction. The fact that $v_a$ strictly decreases in $y$ together with $v_a'(0) = 0$, $v_a'' \leq 0$ implies that $v_a' < 0$ for all $y \in (0, \beta_{n,k})$.\\

Above discussion implies that by inverse function theorem, we can find the inverse $y = u_a(x)$ of $v_a$ which is a strictly decreasing, concave function defined on the interval $x \in [v_a(\beta_{n,k}), a]$ which is smooth in $(v_a(\beta_{n,k}), a)$. Set $$x_{s}(a) = v_a(\beta_{n,k}).$$ We then obtain the desired $u_a : [x_{s}(a), a] \to \mathbf{R}_+$. By the computations in section \ref{prereq}, we see that $u_a$ solves equation \eqref{Main ODE y = u(x)}, and $x_{s}(a) = v_a(\beta_{n,k}) \geq \alpha_k$ can be rewritten as
$$x_{s}(a) \geq \alpha_k, \ u_a(x_{s}(a)) = \beta_{n,k}.$$
The estimates \eqref{initial condition of va cap} imply that
\begin{equation}
    u_a(a) = 0, \ \lim_{x \to a-}u_a'(x) = -\infty.
\end{equation}
Finally, rewriting the estimates \eqref{final c0 estimate for cap} and \eqref{final c1 estimate for cap} in terms of $u_a$ yield
\begin{equation}
    \beta_{n,k}\sqrt{1 - \frac{x^2 - \alpha_k^2}{a^2 - \alpha_k^2}} \leq u_a \leq \beta_{n,k},
\end{equation}
and
\begin{equation}
    u_a'(x) \leq (\frac{u_a(x)}{2} - \frac{\beta_{n,k}^2}{2u_a(x)})\frac{2x}{x^2 - \alpha_k^2} .
\end{equation}
\end{proof}

We now improve the upper bound 
$$u_a(x) \leq \beta_{n,k}$$
given in estimate \eqref{C0 estimate for caps in proposisition 4.1}. The improved estimate will be strong enough to obtain a precise asymptotics of $u_a$ on each bounded interval. The arguments in this section follow the arguments in section 8 of \cite{ADS}. \\

To obtain better estimates, we split the `disk' into its tip region and intermediate region. For each $a > \alpha_k$, define 
\begin{equation}\label{definition of rho}
    \rho = ay \in [0, \beta_{n,k}a)
\end{equation}
and write
\begin{equation}\label{definition of psi}
    x = v_a(y) = a - a^{-1}\psi(\rho, a) = a - a^{-1}\psi(ay, a).
\end{equation}
Here, $x = v_a(y)$ is the inverse function of $y = u_a(x)$ given by proposition \ref{Existence of caps}. Let $\epsilon = \frac{1}{2a^2} \in (0, \frac{1}{2\alpha_k^2})$. Then $\psi(\cdot, \epsilon) = \psi(\cdot, a)$ solves the initial value problem
\begin{equation}\label{IVP for psi}
    \begin{cases}
        \frac{\psi_{\rho\rho}(\rho, \epsilon)}{1 + (\psi_{\rho}(\rho, \epsilon))^2} = -\frac{n - k}{\rho}\psi_{\rho}(\rho, \epsilon) + \frac{1}{2} + \epsilon(\rho\psi_{\rho}(\rho, \epsilon) - \psi(\rho, \epsilon)) - \frac{2(k-1)\epsilon}{1 - 2\epsilon\psi(\rho, \epsilon)}  \\
        \psi(0, \epsilon) = \psi_{\rho}(0, \epsilon) = 0.
    \end{cases}
\end{equation}
Also, as $\epsilon \to 0$ ($a \to \infty$), the above initial value problem becomes
\begin{equation}
    \begin{cases}
        \frac{\Psi_{\rho\rho}}{1 + (\Psi_{\rho})^2} = -\frac{n - k}{\rho}\Psi_{\rho} + \frac{1}{2}  \\
        \Psi(0) = \Psi_{\rho}(0) = 0.
    \end{cases}
\end{equation}
By lemma 8.3 of \cite{ADS}, such $\Psi$ is unique and satisfies the following asymptotics as $\rho \to \infty$
\begin{equation}\label{Asymptotics of Psi}
    \begin{cases}
        \Psi(\rho) = \frac{1}{2\beta_{n,k}^2}\rho^2 - 2\ln \rho +C_0 + O(\rho^{-2}) \\
        \Psi'(\rho) = \frac{1}{\beta_{n,k}^2}\rho - \frac{2}{\rho} + O(\rho^{-3})\\
        \Psi''(\rho) = \frac{1}{\beta_{n,k}^2} + \frac{2}{\rho^2} + O(\rho^{-4}).
    \end{cases}
\end{equation}
By the change of variables
\begin{equation}
    (\rho(\tau), v(\tau), w(\tau)) = (e^{\tau}, \psi(e^{\tau}, \epsilon), \psi_{\rho}(e^{\tau}, \epsilon)),
\end{equation}
unstable manifold theory near the origin gives us the existence, uniqueness, and smooth dependence of $\psi$. 
\begin{lemma}\label{smooth dependence of cap solutions}
For each $\epsilon \in (0, \frac{1}{2\alpha_k^2})$, the initial value problem \eqref{IVP for psi} has a unique solution given by $$\psi(\rho, \epsilon) = \frac{1}{2\epsilon} - \frac{1}{\sqrt{2\epsilon}}v_{1/\sqrt{2\epsilon}}(\sqrt{2\epsilon}\rho)$$
with $\rho \in [0, \frac{\beta_{n,k}}{\sqrt{2\epsilon}})$. Here, $x = v_a(y)$ is the inverse function of $y = u_a(x)$ given by proposition \ref{Existence of caps}. Also
    $\psi(\rho, a) = \psi(\rho, \frac{1}{2a^2}) = \psi(\rho, \epsilon)$ is real analytic in $(\rho, \epsilon)$ variables with $\psi(\cdot, \epsilon) \to \Psi$ as $\epsilon \to 0$.
\end{lemma}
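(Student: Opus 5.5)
The proof splits into three steps: verifying that the rescaled function $\psi$ solves \eqref{IVP for psi}, uniqueness via a singular-ODE unstable-manifold analysis at $\rho = 0$, and real analyticity in $(\rho,\epsilon)$ together with the limit $\epsilon \to 0$.

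\textbf{Step 1: $\psi$ solves \eqref{IVP for psi}.} With $a = 1/\sqrt{2\epsilon}$, define $\psi(\rho,\epsilon) = a^2 - a\,v_a(\rho/a)$, which is exactly \eqref{definition of psi} solved for $\psi$. Writing $y = \rho/a$, we have $v_a' = -a^{-1}\psi_\rho$ and $v_a'' = -\psi_{\rho\rho}$. Substituting into \eqref{Main ODE x = v(y)}, multiplying by $-1$, and using $1/v_a = a/(a^2 - \psi) = a^{-1}/(1 - 2\epsilon\psi)$, we obtain
\[
\frac{\psi_{\rho\rho}}{1+\psi_\rho^2} = -\frac{n-k}{\rho}\psi_\rho + \frac{1}{2} + \epsilon(\rho\psi_\rho - \psi) - \frac{2(k-1)\epsilon}{1-2\epsilon\psi}.
\]
The initial conditions $\psi(0) = \psi_\rho(0) = 0$ follow from \eqref{initial condition of va cap}. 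The range $y \in [0,\beta_{n,k})$ corresponds to $\rho \in [0, \beta_{n,k}/\sqrt{2\epsilon})$.

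\textbf{Step 2: uniqueness.} The only issue is the regular singular point at $\rho = 0$. We set $\tau = \ln\rho$ and rewrite the ODE as an autonomous-in-spirit system for $(\rho, v, w) = (e^\tau, \psi, \psi_\rho)$, with $\epsilon$ adjoined as a parameter:
\[
\rho_\tau = \rho, \qquad v_\tau = \rho w, \qquad w_\tau = \rho\,(1+w^2)\Big[\tfrac12 + \epsilon(\rho w - v) - \tfrac{2(k-1)\epsilon}{1-2\epsilon v}\Big] - (n-k)(1+w^2)\,w .
\]
The origin is a fixed point. Its linearization has eigenvalue $1$ (the $\rho$ direction) and eigenvalue $-(n-k) < 0$ (the $w$ direction), and the $v$ direction is neutral; however, $v$ is determined by $v_\tau = \rho w$, so after adjoining the relation we should restrict to the invariant set where $v$ is slaved to $(\rho, w)$. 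Any solution with $\psi(0) = \psi_\rho(0) = 0$ and $\psi$ being $C^1$ at $0$ gives a trajectory converging to the origin as $\tau \to -\infty$, hence lying on the unstable manifold.

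It is cleanest to prove directly that such trajectories are unique. Write the problem as an integral equation, using $(\rho^{n-k}\psi_\rho)' \approx \rho^{n-k}F$:
\[
\psi_\rho(\rho) = \rho^{-(n-k)}\int_0^\rho s^{n-k}F(s,\psi,\psi_\rho)\,ds \quad \text{(after absorbing the factor } 1+\psi_\rho^2\text{)}.
\]
The map on the right is a contraction on a small ball in $C^0([0,r])$ for $(\psi,\psi_\rho)$, uniformly in $\epsilon$ near any $\epsilon_0 \ge 0$. This gives local existence and uniqueness, and it also shows that the solution is the restriction of $\psi$ from Step 1. Away from $\rho = 0$, standard ODE uniqueness applies.

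\textbf{Step 3: analyticity and the limit.} The right-hand side of the system is real analytic in $(\rho, v, w, \epsilon)$ for $\epsilon v < 1/2$. The analytic unstable manifold theorem, or equivalently the contraction mapping in a space of analytic functions in the Cauchy--Kovalevskaya/Briot--Bouquet style, shows that the solution near $\rho = 0$ is a convergent power series in $(\rho, \epsilon)$. The expansion is in fact in $\rho^2$, with coefficients analytic in $\epsilon$. Analytic dependence on initial data then propagates this analyticity along the flow to the whole domain. Since $\epsilon = 0$ is allowed in the contraction argument and the equation at $\epsilon = 0$ is the $\Psi$-equation, uniqueness identifies $\psi(\cdot, 0) = \Psi$. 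Continuity in $\epsilon$ then gives $\psi(\cdot,\epsilon) \to \Psi$ locally uniformly.

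I expect the main obstacle to be the analyticity at the singular point $\rho = 0$, jointly in $\epsilon$ and including $\epsilon = 0$. The degenerate neutral $v$-direction needs care here. I would handle it with the Briot--Bouquet-type power series argument, using the positive indicial gap $n-k+1 > 0$ to solve for the coefficients recursively and bound them by majorants.
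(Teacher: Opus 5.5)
Your proposal is correct and follows the paper's route: the paper's whole argument is the substitution $(\rho,v,w)=(e^{\tau},\psi,\psi_\rho)$ followed by an appeal to unstable manifold theory at the origin, and your integral-equation contraction and Briot--Bouquet/analytic unstable manifold argument fill in details the paper leaves implicit (the neutral $v$-direction, and real-analytic dependence on $\epsilon$ down to $\epsilon=0$). One small slip in Step 1: from $v_a(y)=a-a^{-1}\psi(ay)$ you get $v_a'(y)=-\psi_\rho(ay)$ and $v_a''(y)=-a\,\psi_{\rho\rho}(ay)$, so you should divide the $v$-equation by $-a$ rather than multiply by $-1$; this is what actually produces the factor $1+\psi_\rho^2$ and the equation you wrote down.
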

For each $M > 0$, $x_0 > \alpha_k$, we define \begin{equation}\label{definition of xma}
    x_{Ma} = a - a^{-1}\psi(M, a).
\end{equation}
We denote the `tip region' to be parts of the `disk' where $x \in [x_{Ma}, a]$ and the `intermediate region' to be parts of the `disk' where $x \in [\max(x_{s}(a), x_0), x_{Ma}]$. Geometrically, the tip region is where $u_a \leq \frac{M}{a}$ and the `intermediate region' is the remaining part except some parts near the boundary of the surface.\\

We now state and prove the improved estimate for `disks'.
\begin{lemma}\label{Improved C^0 estimate}
For each $2 \leq k \leq n-1$, let $u_a$ be the function given in proposition \ref{Existence of caps}. \\\\
(i) There exists $x_0 = x_0(n,k) > 0$, $M_0 = M_0(n,k)$ so that for all $M \geq M_0(n,k)$, there exists $a_0 = a_0(M, n, k) > \alpha_k$ so that one has the following estimate in the `intermediate region'.
    \begin{equation}\label{upper bound for caps in intermediate region}
        u_a(x) \leq \sqrt{\beta_{n,k}^2[1 - (1 - C(n,k)\frac{\ln a}{a^2})\frac{x^2 - c(n,k)}{a^2 - \alpha_k^2}]}
    \end{equation}
    for $x \in [\max(x_{s}(a), x_0), x_{Ma}]$, $M \geq M_0$ and $a \geq a_0$.\\\\
(ii) There exists $x_1 = x_1(n,k) > 0$ and $a_1 = a_1(n,k) > 0 $ so that 
    \begin{equation}\label{rough upper bound for x0(a)}
        x_{s}(a) \leq x_1
    \end{equation}
for all $a \geq a_1$.\\\\
 (iii) One has the following precise asymptotics of $u_a$ on each bounded interval $[x_1, N]$, namely
    \begin{equation}\label{precise asymptotics}
        u_a(x) = \beta_{n,k}(1 - \frac{x^2 - \alpha_k^2 - 2}{2(a^2 - \alpha_k^2)}) + o(a^{-2}) \text{ as }a \to \infty.
    \end{equation}
    Here, $x_1$ is the constant from (ii).
\end{lemma}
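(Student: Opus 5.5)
The plan is to follow the strategy of the lower bound in lemma \ref{Alternative $C^0$ for trumpets} (itself modelled on section 8 of \cite{ADS}), with the roles of lower and upper bounds swapped. The estimate \eqref{C1 estimate for caps in proposisition 4.1} can be rewritten as
\begin{equation*}
    \widetilde w_a(x) := \frac{x^2-\alpha_k^2}{2x}\,\frac{d}{dx}\ln\big(\beta_{n,k}^2-u_a^2(x)\big) \ge 1 .
\end{equation*}
Integrating this one-sided bound gives only the crude lower bound of proposition \ref{Existence of caps}. For an \emph{upper} bound on $u_a$ I need an upper bound $\widetilde w_a\le 1+C/x^2$ in the intermediate region, together with a good boundary value at $x=x_{Ma}$.

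\textbf{Step 1: ODE for $\widetilde w_a$ and a barrier.} Using \eqref{Main ODE y = u(x)}, I would compute, exactly as for $w_\sigma$ in \eqref{ODE for w in trumpet}, an equation of the form
\begin{equation*}
    \Big(\frac{x}{2}-\frac{k-1}{x}\Big)\widetilde w_a' = \widetilde L\,\widetilde w_a ,
\end{equation*}
whose dominant term is $(\tfrac{x}{2}-\tfrac{k-1}{x})^2(1+(u_a')^2)(\widetilde w_a-1)$, plus terms quadratic in $\widetilde w_a$ with bounded coefficients. I would then run the maximum-principle argument of lemma \ref{Derivative bound for trumpets} on $[\max(x_s(a),x_0),x_{Ma}]$.
\begin{itemize}
\item Take the first point from the right where $\widetilde w_a$ reaches $1+\delta$.
\item There the sign of $\widetilde w_a'$ together with the ODE forces $x^2\delta\lesssim 1$.
\item This yields $\widetilde w_a\le 1+C/x^2$ for $x\ge x_0(n,k)$.
\end{itemize}
The difference from the trumpet case is that the right endpoint is now $x_{Ma}$ rather than $\infty$. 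So I must first check that $\widetilde w_a(x_{Ma})\le 1+C/x_{Ma}^2$. This should follow from lemma \ref{smooth dependence of cap solutions}: $\psi(\cdot,a)\to\Psi$, and the asymptotics \eqref{Asymptotics of Psi} at $\rho=M$ compute $\widetilde w_a$ there up to $O(M^{-2})+o_a(1)$. This is where $M\ge M_0$ and $a\ge a_0(M)$ enter.

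\textbf{Step 2: integrate to get (i).} I would integrate $\frac{d}{dx}\ln(\beta_{n,k}^2-u_a^2)\le \frac{2x}{x^2-\alpha_k^2}(1+C/x^2)$ from $x$ to $x_{Ma}$. This gives
\begin{equation*}
    \beta_{n,k}^2-u_a^2(x)\ \ge\ \big(\beta_{n,k}^2-M^2a^{-2}\big)\,\frac{x^2-\alpha_k^2}{x_{Ma}^2-\alpha_k^2}\,e^{-C/x^2}.
\end{equation*}
From \eqref{definition of xma} and \eqref{Asymptotics of Psi},
\begin{equation*}
    a^2-x_{Ma}^2 = 2\Psi(M)+o(1) = \frac{M^2}{\beta_{n,k}^2}-4\ln M+O(1).
\end{equation*}
I would let $M$ grow with $a$, at most like a power of $a$. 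Then the ratio $(a^2-\alpha_k^2)/(x_{Ma}^2-\alpha_k^2)$ and the factor $1-M^2/(\beta_{n,k}^2a^2)$ together cost at most $1-C\ln a/a^2$. Absorbing $e^{-C/x^2}(x^2-\alpha_k^2)\ge x^2-c(n,k)$ then gives \eqref{upper bound for caps in intermediate region}.

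\textbf{Step 3: (ii) and (iii).} For (ii), evaluate \eqref{upper bound for caps in intermediate region} at $x=x_s(a)$, assuming $x_s(a)\ge x_0$. Since $u_a(x_s(a))=\beta_{n,k}$, this forces $x_s(a)^2\le c(n,k)$, so $x_s(a)\le x_1:=\max(x_0,\sqrt{c})$.

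For (iii), on $[x_1,N]$ I would expand both sides of
\begin{equation*}
    \beta_{n,k}\sqrt{1-\tfrac{x^2-\alpha_k^2}{a^2-\alpha_k^2}}\ \le\ u_a\ \le\ \text{the right-hand side of \eqref{upper bound for caps in intermediate region}} .
\end{equation*}
Both are $\beta_{n,k}+O(a^{-2})$, but their $a^{-2}$ coefficients differ by a bounded amount. So the precise constant $-2$ cannot come from sandwiching alone.

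I expect this to be the main obstacle. To resolve it, I would linearize: write $u_a=\beta_{n,k}(1-a^{-2}h_a(x))$ with $h_a$ bounded on $[x_1,N]$ by the bounds above. Plugging this into \eqref{Main ODE y = u(x)} should give, to leading order, the linear ODE
\begin{equation*}
    \Big(\frac{x}{2}-\frac{k-1}{x}\Big)h' - h = -1 ,
\end{equation*}
whose solutions are
\begin{equation*}
    h = \frac{x^2-\alpha_k^2}{2}\cdot\lambda + 1 .
\end{equation*}
The bounds from (i) and proposition \ref{Existence of caps} pin the slope, $\lambda=(1+o(1))\,a^2/(a^2-\alpha_k^2)$. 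The additive constant $1$ produces exactly the $-2$ in \eqref{precise asymptotics}. I would then justify the passage to the limit by Arzel\`a--Ascoli applied to $a^2(\beta_{n,k}-u_a)$, with the ODE giving $C^2$ control.
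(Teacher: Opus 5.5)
Your architecture matches the paper's: the quantity $\widetilde w_a$, a maximum-principle bound on the intermediate region, integration from $x$ to $x_{Ma}$, then (ii) by evaluating at $x_s(a)$. The gap is that Step 1 aims at a bound that is false, so it cannot be proved. By lemma \ref{asymptotics for w in cap}, $\widetilde w_a(x_{Ma})=\frac{1}{\beta_{n,k}^2}\frac{M}{\Psi'(M)}+O(a^{-2})=1+\frac{2\beta_{n,k}^2}{M^2}+O(M^{-4})+O(a^{-2})$, whereas $1+C/x_{Ma}^2=1+O(a^{-2})$. So for fixed $M$ and large $a$ your boundary check fails. Heuristically, in the tip scaling $a^2-x^2\approx 2\Psi(\rho)\approx\rho^2/\beta_{n,k}^2$ and $\widetilde w_a-1\approx 2\beta_{n,k}^2/\rho^2\approx 2/(a^2-x^2)$, which is far larger than $C/x^2$ once $a^2-x^2\ll a^2$.

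Letting $M$ grow with $a$ does not rescue this. Beating $C/a^2$ at the endpoint needs $M\gtrsim a$, i.e.\ $u_a(x_{Ma})=M/a$ of order one, and there the locally uniform convergence $\psi(\cdot,a)\to\Psi$ no longer computes $\widetilde w_a(x_{Ma})$. Also, in Step 2 a choice $M\sim a^p$ costs $M^2/a^2=a^{2p-2}$, not $\ln a/a^2$. Separately, the coefficient $\frac12+\frac{n-k}{u_a^2}$ of $\widetilde w_a^2$ is not bounded: it blows up at the tip, whereas lemma \ref{Derivative bound for trumpets} relied on $u_\sigma\ge\beta_{n,k}$.

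The missing idea is the two-term barrier $\overline w=1+\frac{K}{x^2}+\frac{K}{a^2-x^2}$ of lemma \ref{derivative estimate for caps}, with $K=100$ and $x_0=10\alpha_k$. The lower bound of proposition \ref{Existence of caps} gives $\frac12+\frac{n-k}{u_a^2}\le\frac{a^2}{a^2-x^2}=\frac{x^2}{K}(\overline w-1)$. This is what makes $\overline w$ a supersolution on the intermediate region once $M$, and then $a$, are large. At the endpoint, $\frac{K}{a^2-x_{Ma}^2}=\frac{K}{2\Psi(M)+O(a^{-2})}\approx\frac{K\beta_{n,k}^2}{M^2}$ dominates $\frac{2\beta_{n,k}^2}{M^2}$ for fixed $M\ge M_0$ and $a\ge a_0(M)$. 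Comparison then gives $\widetilde w_a\le\overline w$. Integrating the $\frac{K}{a^2-x^2}$ term up to $x_{Ma}$ is exactly where the $C(n,k)\frac{\ln a}{a^2}$ loss in (i) comes from. With (i) in hand your (ii) is fine, once you note $x_s(a)\le x_{Ma}$; this follows from $u_a(x_{Ma})=M/a<\beta_{n,k}=u_a(x_s(a))$ and monotonicity.

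Step 3 contains a computational error. Substitute $u_a=\beta_{n,k}(1-a^{-2}h)$ into \eqref{Main ODE y = u(x)} and use $\frac{n-k}{\beta_{n,k}}=\frac{\beta_{n,k}}{2}$. The order-$a^{-2}$ terms give the second-order equation $h''=(\frac{x}{2}-\frac{k-1}{x})h'-h$, not a first-order equation with source $-1$. Your $h=\lambda\frac{x^2-\alpha_k^2}{2}+1$ would give $u_a\approx\beta_{n,k}\bigl(1-\frac{x^2-\alpha_k^2+2}{2a^2}\bigr)$, which has the wrong sign. In the correct equation, $h=\lambda\frac{x^2-\alpha_k^2}{2}+c_0$ forces $c_0=-\lambda$, because the constant is supplied by $h''$; that is the true origin of $x^2-\alpha_k^2-2$.

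However, the solution space is then two-dimensional and also contains the exponentially growing solution $v_1$. Excluding $v_1$ and pinning $\lambda=1$ cannot be done with bounds on a single interval $[x_1,N]$. The paper uses $|u_a-\beta_{n,k}(1-\frac{x^2-\alpha_k^2}{2(a^2-\alpha_k^2)})|\le C_{n,k}a^{-2}$ on $[x_1,4L]$, with $C_{n,k}$ independent of $L$ and $a$ large depending on $L$. It then extracts a $C^2_{loc}([x_1,\infty))$ limit of the rescaled profiles and uses the resulting global bound to kill $v_1$ and fix the quadratic coefficient, following lemma 4.4 of \cite{ADS}. Uniqueness of that limit then yields \eqref{precise asymptotics}.
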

\begin{remark}
    The lower bound in estimate \eqref{C0 estimate for caps in proposisition 4.1} and the upper bound \eqref{upper bound for caps in intermediate region} is analogous to proposition 8.7 and 8.10 in \cite{ADS}. The estimate \eqref{precise asymptotics} is analogous to lemma 4.4 in \cite{ADS}.
\end{remark}
\begin{remark}\label{Upper bound of starting point for caps}
    Due to lemma \ref{Improved C^0 estimate}, by defining
  \begin{equation}
      m_0 = m_0(n,k) = \max(x_1, a_1) > \alpha_k,
  \end{equation}
    we have
\begin{equation}
    x_{s}(a) \leq m_0 \text{ for all }a \geq m_0.
\end{equation}
In particular, the `disks' extend all the way up to a fixed compact set independently of $a$.
\end{remark}
As a consequence of lemma \ref{Improved C^0 estimate}, we show that the `disks' sweep the inside of the generalized cylinder $\mathbf{R}^k \times \mathbf{S}^{n-k}(\sqrt{2(n-k)})$ outside a compact region containing the origin.
\begin{corollary}\label{sweeping out by caps}
    Let $m_0 > 0$ be the constant from remark \ref{Upper bound of starting point for caps}. For any $(x,y) \in (m_0, \infty) \times [0, \beta_{n,k})$, there exists $a \geq m_0$ so that $y = u_a(x).$
    
\end{corollary}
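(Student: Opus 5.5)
Fix $(x,y)\in(m_0,\infty)\times[0,\beta_{n,k})$. The plan is to mimic the proof of corollary \ref{Sweeping out by trumpets}: exhibit two parameters $a_-<a_+$, both at least $m_0$, with $u_{a_-}(x)<y<u_{a_+}(x)$ (or equality at one of them), and then apply the intermediate value theorem to the map $a\mapsto u_a(x)$. By remark \ref{Upper bound of starting point for caps}, for every $a\ge m_0$ we have $x_s(a)\le m_0<x$. Hence $u_a(x)$ is defined whenever $a\ge x$, and it is natural to work on the parameter interval $a\in[\max(x,m_0),\infty)=[x,\infty)$.

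\textbf{Lower endpoint.} Take $a_-=x$. Proposition \ref{Existence of caps} gives $u_x(x)=0\le y$. If $y=0$ we are done, so assume $y>0$, in which case $u_{a_-}(x)<y$.

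\textbf{Upper endpoint.} Use the lower bound \eqref{C0 estimate for caps in proposisition 4.1}:
\[
u_a(x)\ge\beta_{n,k}\sqrt{1-\frac{x^2-\alpha_k^2}{a^2-\alpha_k^2}}.
\]
As $a\to\infty$ the right-hand side tends to $\beta_{n,k}>y$. So there is $a_+>x$ with $u_{a_+}(x)>y$.

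\textbf{Continuity in $a$.} This is the main obstacle: we need $a\mapsto u_a(x)$ to be continuous on $[x,a_+]$. By lemma \ref{smooth dependence of cap solutions}, $\psi(\rho,\epsilon)$ is real analytic in $(\rho,\epsilon)$ for $\epsilon=\frac{1}{2a^2}\in(0,\frac{1}{2\alpha_k^2})$. Therefore the inverse profile
\[
v_a(y')=a-a^{-1}\psi(ay',a)
\]
depends continuously, indeed smoothly, on $(y',a)$ for $y'\in[0,\beta_{n,k})$. Rather than inverting, work directly with the function $F(a)=v_a(y)-x$ at the fixed height $y\in(0,\beta_{n,k})$.
\begin{itemize}
\item $F$ is continuous in $a$.
\item Since $v_a$ is strictly decreasing, $F(a)>0$ is equivalent to $u_a(x)>y$, and $F(a)<0$ is equivalent to $u_a(x)<y$, with the convention $u_a(x)<y$ whenever $x\ge v_a(y)$. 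This equivalence uses that $u_a$ is the strictly decreasing inverse of $v_a$ on $[x_s(a),a]$ and that $x_s(a)<x$.
\item At $a=x$: $F(x)=v_x(y)-x<v_x(0)-x=0$, because $v_x$ is strictly decreasing.
\item At $a=a_+$: $F(a_+)>0$ by the choice of $a_+$.
\end{itemize}
The intermediate value theorem then gives $a\in(x,a_+)$ with $v_a(y)=x$, that is $y=u_a(x)$, and $a>x>m_0$.

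If one prefers not to lean on analyticity at the level of $v_a$, continuity of $F$ can be checked by hand. The solutions $v_{a,\epsilon}$ depend continuously on $a$ by standard ODE theory, the bounds \eqref{a priori estimate for appx solution for caps} are uniform for $a$ in compact sets, and uniqueness from lemma \ref{smooth dependence of cap solutions} ensures that limits of $v_{a_j}$ are $v_a$. This compactness–uniqueness argument is the step I would verify most carefully.
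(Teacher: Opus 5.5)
Your proof is correct and is essentially the paper's argument: you take $a=x$ (where $u_x(x)=0$) and a large $a$ supplied by the lower bound in \eqref{C0 estimate for caps in proposisition 4.1}, then apply the intermediate value theorem using the smooth dependence in Lemma~\ref{smooth dependence of cap solutions} together with $x_s(a)\le m_0$ from Remark~\ref{Upper bound of starting point for caps}. Running the intermediate value theorem on $F(a)=v_a(y)-x$ is only a more explicit way of extracting continuity in $a$ from the analyticity of $\psi$, not a different route.
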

\begin{proof}[Proof of corollary \ref{sweeping out by caps}]
    Let $(x,y) \in (m_0, \infty) \times [0, \beta_{n,k})$. If $y = 0$, then choose $a = x \geq m_0$. If $y \in (0, \beta_{n,k})$, then by \eqref{C0 estimate for caps in proposisition 4.1}, we can choose $a_1 > a_0 = x$ so that
\begin{equation}
     0 = u_{a_0}(x)< y < \beta_{n,k}\sqrt{1 - \frac{x^2 - \alpha_k^2}{a_1^2 - \alpha_k^2}} \leq u_{a_1}(x).
\end{equation}
    By the smooth dependence of $u_a$ on its parameter $a > \alpha_k$ (lemma \ref{smooth dependence of cap solutions}), together with $x_{s}(a) \leq m_0$ by remark \ref{Upper bound of starting point for caps}, we can apply the intermediate value theorem to find $a \in (a_0, a_1)$ so that
    $$y = u_a(x).$$
\end{proof}

\begin{proof}[Proof of lemma \ref{Improved C^0 estimate}]
We define
\begin{equation}\label{def of w_a in cap}
    w = w_a(x) = (\frac{x}{2} - \frac{\alpha_k^2}{2x})\frac{d}{dx}\ln(\beta_{n,k}^2 - u_a^2(x))
\end{equation}
for $x \in (x_{s}(a), a)$. Then by equation \eqref{Main ODE y = u(x)}, $w_a$ solves the equation
\begin{equation}\label{ode for w in cap}
    (\frac{x}{2} - \frac{k-1}{x})w_a'(x) = Lw_a
\end{equation}
where
\begin{align*}
    Lw_a = (\frac{1}{2} + \frac{k-1}{x^2})w_a - (\frac{1}{2} + \frac{n-k}{u_a^2})w_a^2 + (\frac{x}{2} - \frac{k-1}{x})^2(1 + (u_a')^2)(w_a - 1).
\end{align*}
By the derivative estimate in \eqref{C1 estimate for caps in proposisition 4.1}, we have \begin{equation}
    w_{a}(x) \geq 1
\end{equation}for all $x_{s}(a) < x < a$.\\

We first obtain an estimate of $w(x_{Ma})$. 
\begin{lemma}\label{asymptotics for w in cap} For each $M > 0$
$$w(x_{Ma}) = \frac{1}{\beta_{n,k}^2}\frac{M}{\Psi'(M)} + O(a^{-2}) \text{ as }a \to \infty.$$
Also, 
$$\frac{M}{\Psi'(M)} = \beta_{n,k}^2 + \frac{2\beta_{n,k}^4}{M^2} + O(M^{-4}) \text{ as }M \to \infty.$$
\end{lemma}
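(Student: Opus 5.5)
We prove the lemma by rewriting $w$ in terms of the rescaled tip profile $\psi$ of \eqref{definition of psi}. Since $y=u_a(x)$ and $x=v_a(y)$ are inverses, $\frac{d}{dx}\ln(\beta_{n,k}^2-u_a^2)=\frac{-2u_a}{(\beta_{n,k}^2-u_a^2)v_a'(u_a)}$. From $v_a(y)=a-a^{-1}\psi(ay,a)$ we get $v_a'(y)=-\psi_\rho(ay,a)$. At $x=x_{Ma}$ we have $u_a(x_{Ma})=M/a$, i.e.\ $\rho=M$. This gives the exact formula
\begin{equation*}
w(x_{Ma})=\Big(\frac{x_{Ma}}{2}-\frac{\alpha_k^2}{2x_{Ma}}\Big)\frac{2M/a}{(\beta_{n,k}^2-M^2/a^2)\,\psi_\rho(M,a)} .
\end{equation*}
It expresses $w(x_{Ma})$ through $x_{Ma}=a-a^{-1}\psi(M,a)$ and the single quantity $\psi_\rho(M,a)$.

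Next we expand each factor as $a\to\infty$ with $M$ fixed, using $\epsilon=\frac{1}{2a^2}$ and Lemma \ref{smooth dependence of cap solutions}. Since $\psi$ is real analytic in $(\rho,\epsilon)$ and $\psi(\cdot,0)=\Psi$, for fixed $M$ we have $\psi(M,\epsilon)=\Psi(M)+O(\epsilon)$ and $\psi_\rho(M,\epsilon)=\Psi'(M)+O(\epsilon)$, i.e.\ errors $O(a^{-2})$. We need $\Psi'(M)>0$ for $M>0$; this follows from the $\Psi$ equation, since $(\rho^{n-k}\Psi_\rho)'>0$ near $0$, and a first zero of $\Psi_\rho$ would force $\Psi_{\rho\rho}=\frac12>0$ there, which is impossible. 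Then:
\begin{itemize}
\item $x_{Ma}=a-a^{-1}\Psi(M)+O(a^{-3})$, hence $\big(\frac{x_{Ma}}{2}-\frac{\alpha_k^2}{2x_{Ma}}\big)\frac{2}{a}=1+O(a^{-2})$;
\item $(\beta_{n,k}^2-M^2a^{-2})^{-1}=\beta_{n,k}^{-2}(1+O(a^{-2}))$.
\end{itemize}
Multiplying these gives $w(x_{Ma})=\frac{1}{\beta_{n,k}^2}\frac{M}{\Psi'(M)}+O(a^{-2})$, with the $O$ constant depending on $M$ (and $n,k$), which is what the statement means.

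The main point needing care is the uniformity of $\psi_\rho(M,\epsilon)\to\Psi'(M)$ at rate $O(\epsilon)$. For this we invoke the joint real analyticity (in particular $C^1$ in $\epsilon$ on compact $\rho$-sets, up to $\epsilon=0$) from Lemma \ref{smooth dependence of cap solutions}. The constant $M$ must also lie in the domain $\rho<\beta_{n,k}a$, which holds for $a$ large.

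The second statement follows directly from \eqref{Asymptotics of Psi}. Write $\Psi'(M)=\frac{M}{\beta_{n,k}^2}\big(1-\frac{2\beta_{n,k}^2}{M^2}+O(M^{-4})\big)$. Then
\begin{equation*}
\frac{M}{\Psi'(M)}=\beta_{n,k}^2\Big(1-\frac{2\beta_{n,k}^2}{M^2}+O(M^{-4})\Big)^{-1}=\beta_{n,k}^2+\frac{2\beta_{n,k}^4}{M^2}+O(M^{-4}),
\end{equation*}
by expanding the geometric series.
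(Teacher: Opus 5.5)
Your proof is correct and follows the paper's argument essentially line for line. You derive the same exact expression for $w(x_{Ma})$ from $u_a(x_{Ma})=M/a$ and $u_a'(x_{Ma})=-\psi_\rho(M,a)^{-1}$, reduce the prefactors to $\beta_{n,k}^{-2}(1+O(a^{-2}))$, use Lemma \ref{smooth dependence of cap solutions} for $\psi_\rho(M,a)=\Psi'(M)+O(a^{-2})$, and invert the expansion \eqref{Asymptotics of Psi} for the second claim. Your check that $\Psi'(M)>0$, which is needed to invert $\psi_\rho(M,a)$ with an $O(a^{-2})$ error, fills in a detail the paper leaves implicit.
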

\begin{proof}
  By \eqref{definition of xma} and \eqref{Asymptotics of Psi}, we have
  \begin{equation}
      x_{Ma} = a - \frac{1}{a}\psi(M,a) = a + O(a^{-1}) \text{ as }a \to \infty,
  \end{equation}
  and
  \begin{equation}
      u_a(x_{Ma}) = \frac{M}{a}, \ u_a'(x_{Ma}) = -\psi_{\rho}(M,a)^{-1}.
  \end{equation}
  Thus, by the definition of $w = w_a$ given by \eqref{def of w_a in cap}, we see that
  \begin{align*}
      w(x_{Ma}) & = (\frac{x_{Ma}^2 - \alpha_k^2}{x_{Ma}})(\frac{M/a}{\beta^2_{n,k} - (M/a)^2})\frac{1}{\psi_{\rho}(M,a)} \\ & = (\frac{M}{\beta^2_{n,k}}+ O(a^{-2}))\frac{1}{\psi_{\rho}(M,a)}.
  \end{align*}
  By using lemma \eqref{smooth dependence of cap solutions} together with \eqref{Asymptotics of Psi}, we see that
  \begin{equation}
      \psi_{\rho}(M,a) = \Psi'(M) + O(a^{-2}) \text{ as }a \to \infty,
  \end{equation}
  which gives us the first desired inequality. \\

  The second inequality is immediate from the asymptotic formula of $\Psi$ given by \eqref{Asymptotics of Psi}.
\end{proof}
We will now obtain an upper bound of $w$ in the intermediate region.
\begin{lemma}\label{derivative estimate for caps}[cf proposition 8.9 in \cite{ADS}]
    There exists $x_0 = x_0(n,k) > 0$, $K = K(n,k) > 0$ $M_0 = M_0(n,k) > 0$ and $a_0 = a_0(M, n, k) > 0$ so that 
    $$1 \leq w(x) \leq 1 + \frac{K}{x^2} + \frac{K}{a^2 - x^2}$$
    for all $x \in (\max(x_{s}(a), x_0), x_{Ma}]$, $M \geq M_0$, $a \geq a_0$.
\end{lemma}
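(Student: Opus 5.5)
The plan is a maximum principle (barrier) argument for the Riccati-type equation \eqref{ode for w in cap}. It parallels the proof of lemma \ref{Derivative bound for trumpets}, with the extra $\frac{K}{a^2-x^2}$ term absorbing the boundary value at $x_{Ma}$. The lower bound $w\geq 1$ is already known from \eqref{C1 estimate for caps in proposisition 4.1}, so only the upper bound needs work.

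First set the barrier
$$\overline{w}(x) = 1 + \frac{K}{x^2} + \frac{K}{a^2-x^2}.$$
At the right endpoint, lemma \ref{asymptotics for w in cap} gives
$$w(x_{Ma}) = 1 + \frac{2\beta_{n,k}^2}{M^2} + O(M^{-4}) + O_M(a^{-2}).$$
Since $x_{Ma} = a + O(a^{-1})$, we have $a^2 - x_{Ma}^2 = 2\psi(M,a) + O(a^{-2})$, and by \eqref{Asymptotics of Psi} this is approximately $M^2/\beta_{n,k}^2$. So
$$\overline{w}(x_{Ma}) \approx 1 + \frac{K\beta_{n,k}^2}{M^2},$$
and choosing $K > 2$ (say $K\ge 4$), then $M_0$ large and $a_0(M)$ large, gives the strict inequality $w(x_{Ma}) < \overline{w}(x_{Ma})$.

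Next suppose the bound fails somewhere in the interval. Let $p = \sup\{x \in (\max(x_s(a),x_0), x_{Ma}] : w(x) > \overline{w}(x)\}$. Then $p < x_{Ma}$, and at $p$ we have $w(p) = \overline{w}(p)$ and $w'(p) \geq \overline{w}'(p)$. Write $w = 1+\eta$ with $\eta = \frac{K}{p^2} + \frac{K}{a^2-p^2}$, and take $x_0$ large so that $\frac{x}{2} - \frac{k-1}{x} \geq \frac{x}{3}$ for $x \geq x_0$. Equation \eqref{ode for w in cap} then gives
$$\frac{p}{3}\,\overline{w}'(p) \leq Lw(p) \leq (\tfrac12 + \tfrac{k-1}{p^2})(1+\eta) - (\tfrac12 + \tfrac{n-k}{u_a^2})(1+\eta)^2 + \tfrac{p^2}{4}(1+(u_a')^2)\eta.$$
The first two terms together are at most $-\frac{n-k}{u_a^2} - \frac{\eta}{2} + O(\frac{1}{p^2})$. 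Because $u_a \leq \beta_{n,k}$, the term $-\frac{n-k}{u_a^2} \leq -\frac12$, which is an order-one negative contribution. The last term is positive and at least $\frac{p^2}{9}\eta$.

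So the estimate would actually have to be run in the opposite direction: the large positive term $\frac{p^2}{9}\eta \gtrsim K$ must be contradicted by the left side. Now
$$\overline{w}'(p) = -\frac{2K}{p^3} + \frac{2Kp}{(a^2-p^2)^2},$$
so $\frac{p}{3}\overline{w}'(p) \leq \frac{2Kp^2}{3(a^2-p^2)^2}$. Compare this with the right side, which is at least
$$\frac{p^2}{9}\Big(\frac{K}{p^2} + \frac{K}{a^2-p^2}\Big) - C(n,k) \;=\; \frac{K}{9} + \frac{Kp^2}{9(a^2-p^2)} - C(n,k).$$
This comparison needs $a^2 - p^2 \gtrsim 6$, which holds with margin since $a^2 - p^2 \geq a^2 - x_{Ma}^2 \approx M^2/\beta_{n,k}^2 \gg 1$ once $M \geq M_0$. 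With $K > 9C(n,k)$ we get RHS $>$ LHS, a contradiction. Hence $w \leq \overline{w}$ on the whole interval.

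The main obstacle is getting the signs right. The relevant inequality at the touching point is $w'(p) \geq \overline{w}'(p)$ when approaching from the right endpoint, which forces $Lw(p) \leq \frac{p}{3}\overline{w}'(p)$, i.e. LHS $\geq$ RHS in the comparison above, contradicting RHS $>$ LHS. This must be checked carefully against the orientation of the equation, using that $\frac{x}{2}-\frac{k-1}{x} > 0$. A second point is uniform control of $(1+(u_a')^2)$, which is only needed from below by $1$. Finally, the constants must be chosen in the order $K$, then $M_0$, then $a_0(M)$.
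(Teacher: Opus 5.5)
Your scheme is the paper's: the barrier $\overline{w}=1+\frac{K}{x^2}+\frac{K}{a^2-x^2}$, the strict inequality $w(x_{Ma})<\overline{w}(x_{Ma})$ from Lemma~\ref{asymptotics for w in cap} with $K>2$, and a touching argument at $p=\sup\{w>\overline{w}\}$. However, the interior estimate has a genuine gap. To reach the contradiction you bound $Lw(p)$ from below by $\frac{p^2}{9}\eta-C(n,k)$. That treats $\bigl(\frac12+\frac{n-k}{u_a^2}\bigr)(1+\eta)^2$ as bounded by a constant depending only on $n,k$, which is false on the intermediate region. There $u_a$ decreases from $\beta_{n,k}$ to $u_a(x_{Ma})=M/a$, so $\frac{n-k}{u_a^2}$ reaches $(n-k)a^2/M^2$ near $x_{Ma}$, which is unbounded as $a\to\infty$. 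Your remark that $u_a\le\beta_{n,k}$ makes this an ``order-one'' contribution goes the wrong way. The inequality $u_a\le\beta_{n,k}$ bounds $\frac{n-k}{u_a^2}$ from below, whereas you need an upper bound, i.e.\ a lower bound on $u_a$. So choosing $K>9C(n,k)$ does not close the argument.

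The missing ingredient is the lower bound in \eqref{C0 estimate for caps in proposisition 4.1}. From $u_a^2\ge\beta_{n,k}^2\frac{a^2-x^2}{a^2-\alpha_k^2}$ you get $\frac12+\frac{n-k}{u_a^2}\le\frac{a^2}{a^2-x^2}=\frac{x^2}{K}(\overline{w}-1)$. So the bad term is controlled by $\frac1K x^2(\overline{w}-1)$, which has the same form as the good term $\frac{x^2}{9}(\overline{w}-1)$ but carries the small factor $\frac1K$.

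Now arrange $\overline{w}<2$ on the whole interval: take $x_0$ large relative to $K$, and $M$ large so that $a^2-x_{Ma}^2=2\Psi(M)+O(a^{-2})$ is large relative to $K$. Then the bad term is at least $-\frac{4x^2}{K}(\overline{w}-1)$ and is absorbed once $K>36$. Room remains for the left-hand side, because $a^2-x_{Ma}^2$ is large. This, and not only the endpoint comparison, is why the $\frac{K}{a^2-x^2}$ term belongs in the barrier. The paper takes $K=100$, $x_0=10\alpha_k$ and checks $(\frac x2-\frac{k-1}{x})\overline{w}'\le L\overline{w}$ on the whole interval.

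Also fix the sign at the touching point. At $p=\sup\{w>\overline{w}\}$ one has $w'(p)\le\overline{w}'(p)$, not $\ge$. Since $\frac p2-\frac{k-1}{p}>0$, this gives $Lw(p)\le(\frac p2-\frac{k-1}{p})\overline{w}'(p)\le\frac{Kp^2}{(a^2-p^2)^2}$. Replacing the coefficient by $\frac p3$ in this upper bound is not valid when $\overline{w}'(p)>0$, though that only changes constants.
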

\begin{proof}[Proof of lemma \ref{derivative estimate for caps}]
Choose
\begin{equation}\label{choice of K and x0}
    K = 100,  \ x_0 = 10 \alpha_k
\end{equation}
For each $a > \alpha_k$, define 
\begin{equation}
    \overline{w}(x) = 1 + \frac{K}{x^2} + \frac{K}{a^2 - x^2} = 1 + \frac{Ka^2}{x^2(a^2 - x^2)}.
\end{equation}
    By the lower bound 
    $$\beta_{n,k}\sqrt{1 - \frac{x^2 - \alpha_k^2}{a^2 - \alpha_k^2}} \leq u_a,$$
     we have
    \begin{equation}\label{estimate of coefficient}
        \frac{1}{2} + \frac{n-k}{u_a^2} \leq \frac{a^2}{a^2 - x^2} = \frac{x^2}{K}(\overline{w}(x)-1).
    \end{equation}
    Also, by \eqref{Asymptotics of Psi}, we have
    $$a^2 - x_{Ma}^2 = 2\psi(M, a) + O(a^{-2}) = 2\Psi(M) + O(a^{-2}).$$
    Then
    $$\max_{x \in (\max(x_{s}(a), x_0), x_{Ma}]}\overline{w}(x) \leq 1 + \max(\frac{2K}{x_0^2}, \frac{2K}{\Psi(M) + O(a^{-2})}).$$
    By fixing $M$ large so that $\Psi(M) > 4K$ (which is possible due to \eqref{Asymptotics of Psi}), and then choosing $a_0 = a_0(M, K, n, k)$ so that $$O(a^{-2}) < 2K$$
    for all $a > a_0$, we have 
    \begin{equation}\label{estimates of overline w inbetween 1 and 2}
        1 < \overline{w}(x) <2 \text{ for }x \in (\max(x_{s}(a), x_0), x_{Ma}].
    \end{equation}
    We first show that
\begin{equation}\label{inequality for overline w in cap}
    (\frac{x}{2} - \frac{k-1}{x})\overline{w}'(x) \leq L\overline{w},
\end{equation}
where $L$ is given by \eqref{ode for w in cap}
for all $x \in (\max(x_{s}(a), x_0), x_{Ma}]$. By \eqref{estimate of coefficient}, \eqref{estimates of overline w inbetween 1 and 2}, and the choice of $K$ and $x_0$ given in \eqref{choice of K and x0}, we have
\begin{align*}
    L\overline{w} &\geq (\frac{1}{2} + \frac{k-1}{x^2})\overline{w}(x) - (\frac{1}{2} + \frac{n-k}{u_a^2})\overline{w}^2 + (\frac{x}{2} - \frac{k-1}{x})^2(\overline{w}-1)\\ &\geq -\frac{4x^2}{K}(\overline{w}-1) + (\frac{x}{2} - \frac{k-1}{x})^2(\overline{w} - 1) \\ & \geq (\frac{x^2}{9} - \frac{4x^2}{K})(\overline{w} - 1)
\end{align*}
for $x \in (\max(x_{s}(a), x_0), x_{Ma}]$. On the other hand
\begin{align*}
     (\frac{x}{2} - \frac{k-1}{x})\overline{w}'(x) & \leq \frac{Kx^2}{(a^2 - x^2)^2} \\ & \leq \frac{x^2(\overline{w}-1)}{a^2 - x_{Ma}^2}\\&  = \frac{x^2(\overline{w}-1)}{2\Psi(M)+ O(a^{-2})} \\ & \leq \frac{x^2(\overline{w}-1)}{6K }
\end{align*}
    by our previous choices of $M$ and $a_0$. Since $K = 100$, we see that 
 \eqref{inequality for overline w in cap} is indeed true
    for $x \in (\max(x_{s}(a), x_0), x_{Ma}]$, $a \geq a_0$.\\

    We now show that $$w(x_{Ma}) < \overline{w}(x_{Ma})$$
    for large enough $M$ and $a$. By lemma \ref{asymptotics for w in cap}, we have 
    $$w(x_{Ma}) = \frac{1}{\beta_{n,k}^2}\frac{M}{\Psi'(M)} + O(a^{-2})$$
    with
    $$\frac{1}{\beta_{n,k}^2}\frac{M}{\Psi'(M)} = 1 + \frac{2\beta_{n,k}^2}{M^2} + O(M^{-4}).$$
    On the other hand
    $$\overline{w}(x_{Ma}) > 1 + \frac{K}{a^2 - x_{Ma}^2} = 1 + \frac{K}{2\Psi(M) + O(a^{-2})}.$$
    The choice of $K$ given in \eqref{choice of K and x0} and the asymptotics of $\Psi$ given in \eqref{Asymptotics of Psi} implies that by fixing large enough $M \geq M_0(n,k)$ and then choosing $a_0 = a_0(M, K,n,k)$, one has
    \begin{equation}\label{comparison of w and overline w at tip in cap case}
        w(x_{Ma}) < \overline{w}(x_{Ma})
    \end{equation}
    for all $a \geq a_0$.
    Thus by comparison principle, we have
    $$w(x) \leq \overline{w}(x) \text{ for }x \in (\max(x_{s}(a), x_0), x_{Ma}]$$
    for all $M \geq M_0(n,k)$ and $a \geq a_0(M, n,k)$.
\end{proof}
To establish (i) of lemma \ref{Improved C^0 estimate}, we first integrate the upper bound in lemma \ref{derivative estimate for caps}. This gives us the estimate 
\begin{equation}
    \ln \frac{\beta_{n,k}^2 -(M/a)^2}{\beta_{n,k}^2 - u_a^2(x)}  \leq \int_{x}^{x_{Ma}}\frac{2x}{x^2 -\alpha_k^2}(1 + \frac{K}{x^2} + \frac{K}{a^2 - x^2})dx.
\end{equation}
By using $e^{-x} \geq 1 - x$, we obtain
\begin{equation}
    u_a^2(x) \leq \beta_{n,k}^2[1 -(1 - C(n,k)\frac{\ln a}{a^2})\frac{x^2 - c(n,k)}{a^2 - \alpha_k^2}]
\end{equation}
for $x \in (\max(x_{s}(a), x_0), x_{Ma}]$ for all $M \geq M_0(n,k)$ and $a \geq a_0(M, n,k)$. By continuity of $u_a$, above estimate holds up to $x = \max(x_{s}(a), x_0)$ thus proving (i) of lemma \ref{Improved C^0 estimate}.\\

 To prove (ii) of lemma \ref{Improved C^0 estimate}, we first choose $x_1 = \max(2c(n,k), x_0(n,k))$ where $x_0$ and $c(n,k)$ are from the estimate \eqref{upper bound for caps in intermediate region}. Fix $M = M_0(n,k)$ and then fix $a_1 = a_1(n,k) \geq a_0(M_0, n,k)$ so that
  $$x_{Ma} > x_1, \ C(n,k)\frac{\ln a}{a^2} < \frac{1}{2}, \ \frac{M_0}{a} \leq \frac{\beta_{n,k}}{2}\text{ for all }a \geq a_1.$$
Here, $C(n,k)$ is a constant in \eqref{upper bound for caps in intermediate region}.
Then for each $a \geq a_1$, we see that $x_{s}(a) \leq x_{Ma}$ because 
$$u_a(x_{Ma}) = \frac{M_0}{a} \leq \frac{\beta_{n,k}}{2} < \beta_{n,k} = u_a(x_{s}(a)).$$
Also, the upper bound in \eqref{upper bound for caps in intermediate region} is strictly smaller than $\beta_{n,k}$ for all $x \in [x_1, x_{Ma}]$, $a \geq a_1$. Because $u_a(x_{s}(a)) = \beta_{n,k}$, we see that $x_{s}(a) \leq x_1 $ for all $a \geq a_1$, thus proving (ii) of lemma \ref{Improved C^0 estimate}.\\

We now prove the asymptotics \eqref{precise asymptotics}. Since this asymptotics is not used in other parts of this paper, and the argument is essentially the same as the proof of lemma 4.4 in \cite{ADS}, we only give a brief sketch of the proof. By combining \eqref{upper bound for caps in intermediate region} and the lower bound of $u_a$ in \eqref{C0 estimate for caps in proposisition 4.1} in proposition \ref{Existence of caps}, and following the proof of proposition 8.11 in \cite{ADS}, we obtain an estimate
    \begin{equation}\label{intermediate estimate for asymptotics of caps}
        |u_a - \beta_{n,k}(1 - \frac{x^2 - \alpha_k^2}{2(a^2 - \alpha_k^2)})| \leq \frac{C_{n,k}}{a^2}
    \end{equation}
    for all $x \in [x_1, 4L]$ for all sufficiently large $a$. Define $v_a$ by the following relation
    \begin{equation}
        u_a = \beta_{n,k}(1 + \frac{v_a}{a^2 - \alpha_k^2}).
    \end{equation}
    From the estimates \eqref{intermediate estimate for asymptotics of caps}, we obtain 
    \begin{equation}
        |v_a -(-\frac{x^2 - \alpha_k^2 -2}{2})| \leq C_{n,k}
    \end{equation}
    for $x \in [x_1, 4L]$ for all $L \geq x_1$ and all sufficiently large $a$. This estimate together with equation \eqref{Main ODE y = u(x)}, and the fact that $v_a$ is concave, we can extract a $C^2_{loc}([x_1, \infty))$ subsequential limit $v_a \to v$ as $a \to \infty$.  This $v$ solves the equation
    $$v_{xx} = (\frac{x}{2} - \frac{k-1}{x})v_x - v.$$
    By finding the general solutions to above equation, we can write
    $$v = \alpha(x^2 - \alpha_k^2 - 2) + \beta v_1$$
    for some constant $\alpha, \beta \in \mathbf{R}$ 
    where $v_1$ is another general solution to above equation given by
 \begin{equation}
     v_1(x) = (x^2 - \alpha_k^2-2)\int_{x_1}^{x}\frac{e^{\eta^2/4}}{\eta^{k-1}(\eta^2 - \alpha_k^2 - 2)^2}d\eta
 \end{equation}
    which has an exponential growth as $x \to \infty$.
    This $v_1$ can be found by writing
    $$v_1(x) = \psi(x)(x^2 - \alpha_k^2 - 2)$$
    and solving for $\psi$.
    By following the arguments of the proof of lemma 4.4 in page 444 to 445 in \cite{ADS}, we obtain the precise asymptotics \eqref{precise asymptotics}.
\end{proof}

\section{Proof of theorem 
\ref{Main theorem : Foliation}}\label{section 5}
In this section, we prove theorem \ref{Main theorem : Foliation}. To more precisely state the main result, we first define the reflection map. 
\begin{definition}\label{reflection map}
    Let $2 \leq k \leq n-1$. Then the reflection map $R_{n,k}$ is defined by
    \begin{equation}
        R_{n,k} : \mathbf{R}^{n-k+1}\times \mathbf{R}^{k} \ni (y,x) \to (x,y) \in \mathbf{R}^{n+1}.
    \end{equation}
\end{definition}
We now state the main result of this section.
\begin{theorem}\label{specific form of theorem 1.3}
    Let $2 \leq k \leq n-1$. Then there exists $R_0 = R_0(n,k) > 0$ so that $\mathbf{R}^{n+1} \setminus B(0, R_0)$ is foliated by the following $O(k)\times O(n-k+1)$ symmetric self-shrinkers. 
    \begin{itemize}
        \item $\Sigma^{n,k}_{\sigma}$ with $\sigma \in (0, \sigma_{n,k})$ (`trumpets' constructed in proposition \ref{Existence of trumpets}).
        \item $R_{n,k}(\Sigma^{n, n-k+1}_{\sigma})$ with $\sigma \in (0, \sigma_{n, n-k+1})$ (reflected `trumpets').
        \item $\Gamma^{n,k}_a$ (`disks' constructed in proposition \ref{Existence of caps}).
        \item $R_{n,k}(\Gamma^{n,n-k+1}_a)$  (reflected `disks'). 
        \item $C_{n,k} = \mathbf{S}^{k-1}(\alpha_k) \times \mathbf{R}^{n-k+1}$ ($O(k)\times O(n-k+1)$ symmetric cylinder).
        \item $\Tilde{C}_{n,k} = \mathbf{R}^k\times \mathbf{S}^{n-k}(\beta_{n,k})$ ($O(k)\times O(n-k+1)$ symmetric cylinder).
        \item $\Lambda_{n,k} = \{(x,y) \in \mathbf{R}^k\times \mathbf{R}^{n-k+1}\ | \ |y| = \sigma_{n,k}|x| \}$ ($O(k)\times O(n-k+1)$ symmetric minimal cone).
    \end{itemize}
\end{theorem}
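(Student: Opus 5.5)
Everything reduces to the profile quadrant $\{(x,y): x\ge 0, y\ge 0\}$, with $x=|X_1|$ for $X_1\in\mathbf{R}^k$ and $y=|X_2|$ for $X_2\in\mathbf{R}^{n-k+1}$. Each listed hypersurface is the $O(k)\times O(n-k+1)$ orbit of a profile curve. A point of $\mathbf{R}^{n+1}$ lies on exactly one leaf iff its image $(x,y)$ lies on exactly one profile curve. Smoothness of the foliation follows from the smooth dependence on parameters (remark \ref{smooth dependence of trumpets on slope} and lemma \ref{smooth dependence of cap solutions}) together with a nonvanishing transversal derivative, i.e.\ strict monotonicity in the parameter.

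I split the quadrant minus a large ball into six regions: (A) $\{\beta_{n,k}<y<\sigma_{n,k}x\}$; (B) $\{y=\sigma_{n,k}x\}$; (C) $\{0\le y<\beta_{n,k}\}$ and (D) $\{y=\beta_{n,k}\}$, both with $x$ large; and the mirror-image regions (A') $\{\alpha_k<x<y/\sigma_{n,k}\}$, (C') $\{0\le x<\alpha_k\}$, (D') $\{x=\alpha_k\}$ with $y$ large. Under $R_{n,k}$, the roles of $(k,n-k)$ and $(n-k+1-1, k-1)$ swap, so $\alpha_{n-k+1}=\beta_{n,k}$, $\beta_{n,n-k+1}=\alpha_k$, and $\sigma_{n,n-k+1}=1/\sigma_{n,k}$. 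Hence the reflected trumpets and disks live exactly in (A') and (C'), and their properties follow from the unreflected statements applied to $k'=n-k+1$. This is legitimate since $2\le k'\le n-1$.

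The key steps, in order:
\begin{enumerate}
\item \emph{Existence.} Choose $R_0$ so large that $|(x,y)|>R_0$ forces one of the following: $x>\max(l_0,m_0)$ where $y\le\sigma_{n,k}x$, or $y>\max(l_0',m_0')$ (the primed constants being those for $k'$) where $y\ge\sigma_{n,k}x$. The point is that near the cone both coordinates are large, while inside the cylinders one coordinate is bounded, so the other must be large. Existence in (A) is then corollary \ref{Sweeping out by trumpets}, and in (C) it is corollary \ref{sweeping out by caps}. Regions (B), (D), (D') are the cone and the two cylinders.
\item \emph{Leaves stay in their regions.} By proposition \ref{Existence of trumpets}, $\beta_{n,k}<u_\sigma<\sigma_{n,k}x$ for $x>x_s(\sigma)$. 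By proposition \ref{Existence of caps}, $0\le u_a<\beta_{n,k}$; strictness holds for $x>x_s(a)$ by strict monotonicity. Hence distinct families never meet outside the ball. For the reflected families, the parts lying in the region $y\le\sigma_{n,k}x$ are contained in $B(0,R_0)$, after enlarging $R_0$ using the uniform bounds $x_s\le l_0,m_0$.
\item \emph{Uniqueness within a family.} For trumpets, if $u_{\sigma_1}(x)=u_{\sigma_2}(x)$ with $\sigma_1<\sigma_2$ at some $x>l_0$, I want a contradiction. Two solutions of the same second-order ODE cannot be tangent, so they cross transversally. I would rather use a monotonicity argument: show $\partial_\sigma u_\sigma>0$. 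The function $z=\partial_\sigma u_\sigma$ solves the linearized equation, and $z\to x$ at infinity by the asymptotics \eqref{asymptotics of trumpets} differentiated in $\sigma$ via the contraction-map representation. A maximum-principle / Sturm argument for the linearized operator on $[l_0,\infty)$ should then keep $z$ positive, using that the zeroth-order coefficient has a sign there. Alternatively, an intersection at $x_*$ combined with the ordering at infinity forces a second intersection, and between two consecutive intersections the difference satisfies a linear ODE to which the same maximum principle applies.

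For disks I would use the $\rho,\psi$ picture: $\partial_a v_a$ should be positive, with the same Sturm comparison on $y\in[0,\beta_{n,k})$ for the linearization of \eqref{Main ODE x = v(y)}. Its zeroth-order coefficient involves $-\tfrac12-\tfrac{k-1}{v^2}$, and it is negative for concave $v$.
\end{enumerate}

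I expect step 3, the disjointness of distinct leaves in the same family, to be the main obstacle. The estimates proved so far (lemma \ref{Alternative $C^0$ for trumpets}, lemma \ref{Improved C^0 estimate}) only give sweeping, not monotonicity in the parameter. I would first try the linearized-ODE maximum principle for $\partial_\sigma u_\sigma$ and $\partial_a v_a$, possibly enlarging $R_0$ so that the coefficients have favorable signs in the relevant range.
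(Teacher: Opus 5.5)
Your reduction to the profile quadrant and the region bookkeeping match the paper, including the identities $\alpha_{n-k+1}=\beta_{n,k}$, $\beta_{n,n-k+1}=\alpha_k$ and $\sigma_{n,n-k+1}=1/\sigma_{n,k}$. Existence via the two sweeping corollaries is also right, and Step 2 is fine. The gap is in Step 3. This is where the paper does its real work, and the mechanism you propose there fails.

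\textbf{Why the maximum principle fails.} Set $z=\partial_\sigma u_\sigma$ and differentiate \eqref{Main ODE y = u(x)}. This gives
\[
z''-b\,z'+(1+(u_\sigma')^2)\Bigl(\frac12+\frac{n-k}{u_\sigma^2}\Bigr)z=0 .
\]
The same form holds for $\partial_a v_a$, with $\frac12+\frac{k-1}{v_a^2}$ in place of $\frac12+\frac{n-k}{u_\sigma^2}$. Geometrically this is $\mathcal{L}V=0$ with potential $|A|^2+\frac12>0$.
\begin{itemize}
\item The zeroth-order coefficient is positive everywhere. This does not depend on convexity or concavity, nor on how large you take $R_0$.
\item Positive is the wrong sign for a maximum principle. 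A positive interior maximum or a negative interior minimum of $z$ is compatible with the equation; compare $z''=-z$. So ``$z$ stays positive'' does not follow, and neither does the difference-of-two-solutions variant.
\item Your claim that an intersection plus the ordering at infinity forces a second intersection is unjustified. Nothing is known about the ordering near $x=l_0$.
\end{itemize}

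\textbf{The missing idea: a positive strict supersolution.} The paper takes $W=e^{|X|^2/8}$, for which $\mathcal{L}W/W\le\frac{n+2}{4}+|A|^2-\frac{|X|^2}{16}$. It then applies the maximum principle to $V/W$ rather than to $V$.

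For trumpets this works on $x\ge x_0$:
\begin{itemize}
\item There $|A|^2\le C(n,k)$, by lemma \ref{Derivative bound for trumpets}, so $\mathcal{L}W<0$.
\item $V\sim x/\sqrt{1+\sigma^2}>0$ and $V/W\to0$ at infinity.
\item Hence a zero of $V$ would force a positive interior maximum of $V/W$, contradicting $\mathcal{L}W<0$.
\end{itemize}

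For disks this is not enough, because near the tip $|A|$ is of order $a$, comparable to $|X|$. The paper therefore splits the disk into two regions.
\begin{itemize}
\item In the tip region $\rho=au_a\le M$, the rescaling $\psi(\cdot,a)\to\Psi$ of lemma \ref{smooth dependence of cap solutions} gives $V=(1+\Psi'^2)^{-1/2}+O(a^{-2})>0$.
\item In the intermediate region $x_0\le x\le x_{Ma}$, one has $\mathcal{L}W<0$ once $M$ and then $a$ are large.
\item A maximum of $V/W$ at the interface $x_{Ma}$ is excluded by comparing $\frac{V_\rho}{V}\approx-\frac1M$ with $\frac{W_\rho}{W}\approx-\frac{M}{4\beta_{n,k}^2}$.
\end{itemize}

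Your plan of a Sturm argument on all of $y\in[0,\beta_{n,k})$ would also need control near the disk boundary $y=\beta_{n,k}$, which you do not have. The paper avoids this: it only needs positivity for $x\ge x_0$ and $a\ge a_0$. It starts from a largest zero $\bar x\ge x_0$ of $V$ and works on $[\bar x,x_{Ma}]$.
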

By using the reduction onto the first quadrant given in section \ref{prereq}, we can visualize the surfaces considered in theorem \ref{specific form of theorem 1.3}. For example, when we take $(n,k) = (3,2)$, then we get the following picture.
\begin{figure}[!h]
    \centering
    \centerline{\includegraphics[width=7cm]{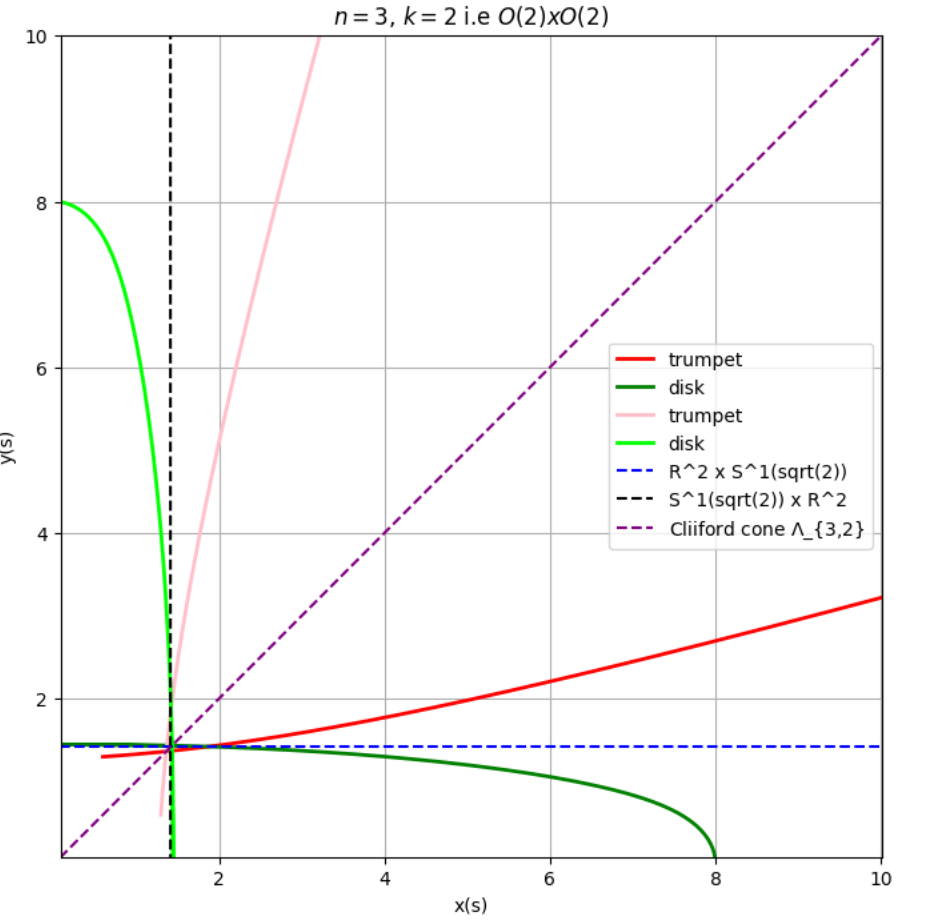}}
    \label{fig:label}
    \caption{Some examples of `trumpets' and `disks' with $O(2) \times O(2)$ symmetry}
\end{figure}

We now begin the proof of theorem \ref{specific form of theorem 1.3}. We first show that the `disks' foliate the inside of generalized cylinders except some compact set containing the origin. Since we already showed that the `disks' sweep out such region (corollary \ref{sweeping out by caps}), it is enough to show that these `disks' vary `monotonically' in $a$. Since we are going to remove a compact set near the origin, we focus on parts of `disks' where $x \geq m_0$ with $m_0$ being the constant from remark \ref{Upper bound of starting point for caps}. Then for each $a \geq m_0$, the `disks' are parametrized by
$$X_a(x, w_1, w_2) : [m_0, a] \times \mathbf{S}^{k-1} \times \mathbf{S}^{n-k} \to \mathbf{R}^{n+1}, \ \ X_a(x,w_1, w_2) = (xw_1, u_a(x)w_2).$$
One can now define the normal variation of $X_a$ given by
\begin{equation}\label{normal variation of caps}
  V = V_a(x) = \nu \cdot \frac{\partial X_a}{\partial a} = \frac{1}{\sqrt{1 + (u_a')^2}}\frac{\partial u_a}{\partial a}.  
\end{equation}
We will prove that the `disks' form a foliation by showing that for sufficiently large $x$ and $a$, the normal variation \eqref{normal variation of caps} is positive. This will be done by comparing $V$ with a suitable barrier function which is a strict supersolution to the linearization of the self shrinker equation. 
\begin{lemma}\label{injectivity of caps}
    There exists $x_0 = x_0(n,k) > 0$ and $a_0 = a_0(n,k) > 0$ so that $V > 0$ on parts of `disks' where $x \geq x_0$ for $a \geq a_0$.
\end{lemma}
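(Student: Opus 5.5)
We will show $V>0$ via a maximum-principle comparison for the Jacobi (linearized shrinker) operator, as in the foliation arguments of section 8 of \cite{ADS}. Since each $u_a$ solves \eqref{Main ODE y = u(x)}, differentiating in $a$ (legitimate by lemma \ref{smooth dependence of cap solutions}) shows that $V$ is rotationally invariant and solves $\mathcal{L}V=0$ on the part of $\Gamma^{n,k}_a$ where $x\in(m_0,a)$. Here
\[
\mathcal{L}=\Delta-\tfrac12\langle X,\nabla\,\cdot\,\rangle+|A|^2+\tfrac12
\]
is the linearized self-shrinker operator, reduced to an ODE in the arclength along the profile curve.

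\textbf{Step 1: boundary behaviour.} Near the tip, write $x=v_a(y)=a-a^{-1}\psi(ay,a)$. Then $\partial_a v_a = 1+a^{-2}\psi - a^{-1}(\rho\psi_\rho+\partial_a\psi)$. By lemma \ref{smooth dependence of cap solutions} and \eqref{Asymptotics of Psi}, this is $1+O(a^{-2})$ uniformly on the tip region $\rho\le M$. The normal variation can equally be computed from the $x=v(y)$ parametrization; it equals $\partial_a v_a/\sqrt{1+(v_a')^2}$ up to sign conventions, so $V>0$ on the tip region $[x_{Ma},a]$ for $a\ge a_0(M)$. In particular $V(x_{Ma})>0$.

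\textbf{Step 2: barrier on the intermediate region.} On $[x_0,x_{Ma}]$ I would construct a positive strict supersolution $\overline V$ with $\mathcal{L}\overline V<0$, and then apply the standard positivity argument. Set $\lambda=\inf\{t>0: t\overline V+V>0 \text{ on } [x_0,x_{Ma}]\}$ and argue that an interior touching point contradicts $\mathcal{L}(t\overline V+V)<0$. For this to work one also needs $V>0$ at the inner endpoint $x_0$.

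For the inner endpoint, I would use the precise asymptotics \eqref{precise asymptotics}. Differentiating in $a$, justified by analyticity in $\epsilon=1/(2a^2)$ and the ODE, gives
\[
\partial_a u_a\approx \beta_{n,k}\,\frac{x^2-\alpha_k^2-2}{(a^2-\alpha_k^2)^2}\,a>0
\]
for $x\ge x_0$ once $x_0^2>\alpha_k^2+2$. This is the natural reason the statement restricts to $x\ge x_0$.

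For the barrier, the natural candidate is modelled on the cylinder's Jacobi fields. On $\tilde C_{n,k}$, the $y$-translation variation corresponds to $x^2-\alpha_k^2-2$, which is the same as $v$ in the asymptotic analysis. So I would try $\overline V=(x^2-c)\cdot h$ with $c>\alpha_k^2+2$ slightly, perturbed so that the error terms produce a strictly negative $\mathcal{L}\overline V$. The error terms come from $u_a$ deviating from $\beta_{n,k}$ and from $u_a'\ne0$, and they are controlled by \eqref{C0 estimate for caps in proposisition 4.1}, \eqref{upper bound for caps in intermediate region} and lemma \ref{derivative estimate for caps}.

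\textbf{Main obstacle.} The hard step is making $\overline V$ a strict supersolution uniformly on the whole intermediate region, including near $x_{Ma}$ where $u_a$ is small and $|A|^2$ is large. Near the cylinder the spectral gap is thin, since $x^2-\alpha_k^2-2$ is an exact Jacobi field. I would first try the ADS trick of using $w$ from \eqref{def of w_a in cap} and the bound $1\le w\le 1+K/x^2+K/(a^2-x^2)$. Writing $V$ in terms of $\partial_a\ln(\beta_{n,k}^2-u_a^2)$ should turn the positivity question into monotonicity of $\ln(\beta_{n,k}^2-u_a^2)$ in $a$. That monotonicity would in turn follow from comparing the first-order equation for $w_a$ across different $a$, using the same comparison principle as in lemma \ref{derivative estimate for caps}.
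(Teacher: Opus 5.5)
\textbf{Verdict: genuine gap.} The gap is where you yourself place the ``main obstacle'': Step 2 never produces a barrier, and your scheme also needs an input you cannot supply.

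\textbf{The inner endpoint.} Your $\lambda$-argument requires $V(x_0)>0$, which you get by differentiating \eqref{precise asymptotics} in $a$. That is not justified.
\begin{itemize}
\item An $o(a^{-2})$ remainder need not have $a$-derivative $o(a^{-3})$, while the main term of $\partial_a u_a$ is itself only of size $a^{-3}$.
\item The analyticity in lemma \ref{smooth dependence of cap solutions} only controls $\psi(\cdot,\epsilon)\to\Psi$ for bounded $\rho$, i.e.\ in the tip region. A fixed $x_0$ corresponds to $\rho=a\,u_a(x_0)\approx a\beta_{n,k}\to\infty$.
\item The paper only sketches (iii) and never uses it.
\end{itemize}

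\textbf{The barrier.} A barrier modelled on the cylinder Jacobi field $x^2-\alpha_k^2-2=|x|^2-2k$ has only an $O(1)$ margin: $\mathcal{L}(x^2-c)=2k-c$ on the cylinder. On the disk near $x_{Ma}$, however, $u_a=M/a$. So the potential $|A|^2-\frac12$ is of order $a^2/M^2$, and acting on $x^2-c\approx a^2$ it produces a positive error of order $a^4/M^2$. No bounded correction $h$ absorbs that. Your fallback via $w_a$ is not an argument either. $w_a$ encodes $x$-derivatives of $\ln(\beta_{n,k}^2-u_a^2)$, not $a$-derivatives. Moreover \eqref{ode for w in cap} has coefficients depending on $u_a,u_a'$, so there is no comparison principle between $w_a$ and $w_b$ for $a\neq b$.

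\textbf{The missing idea.} The paper uses the exponential barrier $W=e^{|X|^2/8}$ and a boundary-derivative comparison at $x_{Ma}$ instead of a Dirichlet comparison.
\begin{itemize}
\item One computes $\mathcal{L}W/W\le \frac{n+2}{4}+|A|^2-\frac{1}{16}|X|^2$, so the margin grows like $x^2$.
\item On the intermediate region, concavity of $u_a$, the bound $(u_a')^2\le \psi_\rho(M,a)^{-2}$, and convexity of $1/u_a$ give $|A|^2\le c(n,k)\bigl(\Psi'(M)^{-2}+M^{-2}+O(a^{-2})\bigr)x^2+C(n,k)$.
\item Hence $\mathcal{L}W<0$ for $x\ge \tilde{x}_0$ once $M$, and then $a$, are large.
\end{itemize}

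With this, no information at the inner endpoint is needed. Suppose $V$ vanishes somewhere in $\{x\ge x_0\}$, and let $\bar x$ be the largest zero. Then $f=V/W$ vanishes at $\bar x$ and is positive to its right, so it has a positive maximum on $(\bar x,x_{Ma}]$.
\begin{itemize}
\item An interior maximum contradicts $\mathcal{L}V=0$ together with $\mathcal{L}W<0$.
\item A maximum at $x_{Ma}$ forces $V_\rho/V\le W_\rho/W$ there. Up to $O(a^{-2})$, this reads $-\frac1M+O(M^{-3})\le-\frac{M}{4\beta_{n,k}^2}+O(M^{-1})$, which fails for $M$ large.
\end{itemize}

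So beyond the tip positivity of your Step 1, you need the tip asymptotics of the logarithmic derivative $V_\rho/V$ at $x_{Ma}$. Step 1 is otherwise fine, modulo a slip: the $\rho\psi_\rho$ term in $\partial_a v_a$ carries $a^{-2}$, not $a^{-1}$.
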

\begin{proof}[Proof of lemma \ref{injectivity of caps}]
    The proof is essentially same as the proof of lemma 8.12 in \cite{ADS}. We define the linearization of the self shrinker equation
    \begin{equation}\label{linearized operator}
        \mathcal{L}_a(f)  = \Delta f - \nabla \phi \cdot \nabla f + (|A|^2 + \frac{1}{2})f
    \end{equation}
    where $A$ is the second fundamental form of the `disk' $\Gamma^{n,k}_a$, and $\phi = \frac{|X|^2}{4}$. We define the following barrier function
    \begin{equation}\label{Barrier function W}
        W = e^{\phi/2} = e^{|X|^2/8}.
    \end{equation}
    We first look at the tip region. 
\begin{lemma}\label{Asymptotic expansion of V in cap}Let $V$ be the normal variation of `disks' given in \eqref{normal variation of caps}. Then the following holds. \\

(i) For all $a > \alpha_k$, 
    $$\mathcal{L}(V) = 0$$
     on the `disk' in theorem \ref{Main theorem : existence of birotational caps} generated by $u_a$ from proposition \ref{Existence of caps}.\\
     
    (ii) For each $M > 0$,
    $$V = \frac{1}{\sqrt{1 + (\Psi')^2}} + O(a^{-2}) \text{ as }a \to \infty$$
    uniformly in $\rho \in [0, M]$ where $\rho$ is given by \eqref{definition of rho}. Also
    $$\frac{V_{\rho}}{V}(x_{Ma}) = - \frac{\Psi'(M)\Psi''(M)}{1 + \Psi'(M)^2} + O(a^{-2}) \text{ as }a \to \infty.$$
    When $\rho = 0$ i.e at the tip, $V = 1$. 
\end{lemma}
\begin{proof}
    Item (i) is clear since each $\Gamma^{n,k}_a$ is a mean curvature flow self shrinker. \\

    To prove (ii), we recall that the tip region ($\rho = ay \in [0, M]$) is parametrized by
    \begin{equation}
        [0,M] \times \mathbf{S}^{k-1}\times \mathbf{S}^{n-k} \ni (\rho, w_1, w_2) \to ((a - a^{-1}\psi(\rho, a))w_1, \frac{\rho}{a}w_2),
    \end{equation}
where $\psi$ is defined in \eqref{definition of psi}. Then we have 
\begin{equation}
    u_a(a - a^{-1}\psi(\rho, a)) = \frac{\rho}{a}.
\end{equation}
Differentiating in $\rho$, and $a$ respectively, we obtain
\begin{equation*}
     u_a'(a - a^{-1}\psi(\rho, a)) = -\psi_{\rho}(\rho, a)^{-1}, 
\end{equation*}
and
\begin{equation*}
    \partial_au_a(a - a^{-1}\psi(\rho, a)) = -\frac{\rho}{a^2} +\psi_{\rho}(\rho, a)^{-1}(1 + \frac{1}{a^2}\psi(\rho, a) - \frac{1}{a}\partial_a\psi(\rho, a)).
\end{equation*}
Then item (ii) follows by using lemma \ref{smooth dependence of cap solutions} to replace $\psi(\rho, a)$ with $\Psi(\rho)$. 
\end{proof}
Lemma \ref{Asymptotic expansion of V in cap} immediately implies following corollary which says that the normal variation is strictly positive in the `tip region'. 
\begin{corollary}\label{nonvanishing of normal variation at tip}
    For each $M > 0$, one can find $\hat{a}_0 = \hat{a}_0(M, n,k) > 0$ so that $$V > 0$$
for all $x \geq x_{Ma}, a \geq \hat{a}_0$. 
\end{corollary}
To control the `intermediate region', we 
need the following lemma for $W$.
\begin{lemma}\label{properties of W on disk}
    Let $W$ be as in \eqref{Barrier function W}. There exists $\Tilde{x}_0 = \Tilde{x}_0(n,k) > 0$, $\Tilde{M}_0 = \Tilde{M}_0(n,k) > 0$ and $\Tilde{a}_0 = \Tilde{a}_0(M, n,k) > 0$ so that
    $$\mathcal{L}(W) < 0$$
    on parts of `disk' where $\Tilde{x}_0 \leq x \leq x_{Ma}$ for each $M \geq \Tilde{M}_0$ and $a \geq \Tilde{a}_0$.\\
    
Also, for each $M > 0$
$$\frac{W_{\rho}}{W}(x_{Ma}) = -\frac{1}{4}\Psi'(M) + O(a^{-2})$$
as $a \to \infty$.
\end{lemma}
\begin{proof}[Proof of lemma \ref{properties of W on disk}]
    By direct calculation, we see that
\begin{equation}\label{estimation of L(W) for caps}
   \frac{ \mathcal{L}(W)}{W} \leq \frac{n + 2}{4} + |A|^2 - \frac{1}{16}|X|^2.
\end{equation}
By the computations given in section \ref{prereq}, 
\begin{equation}
    |A|^2 \leq (\frac{-u_{a}''}{1 + (u_{a}')^2})^2 + (k-1)(\frac{u_{a}'}{x})^2 + (n-k)\frac{1}{u_{a}^2}.
\end{equation}
By using concavity of $u_a$, together with the equation \eqref{Main ODE y = u(x)}, and the fact that $u_a' \leq 0, u_a > 0$, we have
\begin{equation}
    0 < -\frac{u_a''}{1 + (u_a')^2} \leq -\frac{xu_a'}{2} + \frac{u_a}{2}.
\end{equation}
Therefore, we obtain
\begin{align*}
    |A|^2 &\leq (\frac{-u_{a}''}{1 + (u_{a}')^2})^2 + (k-1)(\frac{u_{a}'}{x})^2 + (n-k)\frac{1}{u_{a}^2}\\& \leq \frac{x^2(u_a')^2}{2} + \frac{u_a^2}{2} + (k-1)(\frac{u_{a}'}{x})^2 + (n-k)\frac{1}{u_{a}^2} \\ & \leq x^2(u_a')^2 + \frac{n-k}{u_a^2} + C(n,k).
\end{align*}
Here, we used the fact that $0 \leq u_a \leq \beta_{n,k}$, and that $x \geq 
\Tilde{x}_0(n,k)$. \\

Since $u_a' < 0, \ u_a'' \leq 0$, and $x \leq x_{Ma}$, we have
\begin{equation}
    (u_a'(x))^2 \leq (u_a'(x_{Ma}))^2 = \frac{1}{\psi_{\rho}(M, a)^2}.
\end{equation}
Also, since $\frac{1}{u_a} > 0$ is convex, by interpolation between $\Tilde{x_0}(n,k)$ and $x_{Ma}$, we have
\begin{equation}
    \frac{1}{u_a^2} \leq c(n,k)(\frac{x^2}{M^2} + 1) 
\end{equation}
for every $a \geq \Tilde{a}_0(M,n,k)$ for each fixed large $M > 0$. Therefore, we obtain
\begin{equation}\label{estimate of |A| for caps}
    |A|^2 \leq c(n,k)(\frac{1}{\Psi'(M)^2} + \frac{1}{M^2} + O(a^{-2}))x^2 + C(n,k).
\end{equation}
Taking into account the asymptotics of $\Psi$ given in \eqref{Asymptotics of Psi}, one can choose $\Tilde{M}_0 = \Tilde{M}_0(n,k) > 0$ so that for each $M \geq \Tilde{M}_0$, one can choose $\Tilde{a}_0 = \Tilde{a}_0(M, n, k) > 0$ so that 
\begin{equation}\label{choices of tildeM0 and tildea0}
    c(n,k)(\frac{1}{\Psi'(M)^2} + \frac{1}{M^2} + O(a^{-2})) < \frac{1}{32}
\end{equation}
for all $a \geq \Tilde{a}_0$. Then by \eqref{estimation of L(W) for caps}, \eqref{estimate of |A| for caps}, and \eqref{choices of tildeM0 and tildea0}, we see that there exists $\Tilde{x}_0 = \Tilde{x}_0(n,k) > 0$ so that whenever $x \geq \Tilde{x}_0$, $M \geq \Tilde{M}_0$ and $a \geq \Tilde{a}_0$, we obtain
\begin{equation}\label{LW is supersolution}
    \frac{\mathcal{L}(W)}{W} \leq \frac{n + 2}{4} + |A|^2 - \frac{1}{16}|X|^2  \leq C(n,k) - \frac{x^2}{32} < 0
\end{equation}
thus proving the first part of lemma \ref{properties of W on disk}. The proof of the second formula is an immediate consequence of the definition of $x_{Ma}$ given by
\begin{equation}
    u_a(x_{Ma}) = u_a(a - a^{-1}\psi(M, a)) = \frac{M}{a}
\end{equation}
together with lemma \ref{smooth dependence of cap solutions}.
\end{proof}
Once lemma \ref{properties of W on disk} is established, one can proceed as in section 8.7.5 of \cite{ADS}. Choose $x_0 = \Tilde{x}_0 > 0$ with $\Tilde{x}_0$ being the constant from lemma \ref{properties of W on disk}. Assume there exists $\overline{x} \geq x_0$ so that $V(\overline{x}) \leq 0$. We then consider $$f(x) = \frac{V}{W}$$
on $[x_0, a]$. Let us fix $M > 0$ for the moment. By corollary \ref{nonvanishing of normal variation at tip}, there exists $\hat{a}_0(M, n,k) > 0$ so that $f > 0$ on $[x_{Ma}, a]$ for all $a \geq \hat{a}_0$. This means $\overline{x} < x_{Ma}$, hence by possibly choosing the maximal $\overline{x}$, without loss of generality, we may assume that $f(\overline{x}) = 0$ and $f > 0$ for all $x > \overline{x}$. Then there exists $p \in (\overline{x}, x_{Ma}]$ so that
$$f(p) = \max_{[\overline{x}, x_{Ma}]}f(x) > 0.$$
If $p < x_{Ma}$, then by maximum principle together with $f(p) > 0$, and (i) of lemma \ref{Asymptotic expansion of V in cap}
$$\mathcal{L}(W) \geq 0.$$
This contradicts lemma \ref{properties of W on disk} if $M \geq \Tilde{M}_0$ and $a \geq \Tilde{a}_0(M, n, k)$. Thus $p = x_{Ma}$. Combining with the fact that $\rho$ decreases as $x$ increases, we obtain 
$$\frac{d}{d\rho}f(x_{Ma}) \leq 0,$$
hence
$$\frac{V_{\rho}}{V}(x_{Ma}) \leq \frac{W_{\rho}}{W}(x_{Ma}).$$
By (ii) of lemma \ref{Asymptotic expansion of V in cap}, lemma \ref{properties of W on disk}, and the asymptotics of $\Psi$ given in \eqref{Asymptotics of Psi}, we have
\begin{equation}\label{comparsion of V and W at boundary of tip and intermideiate}
    - \frac{\Psi'(M)\Psi''(M)}{1 + \Psi'(M)^2} + O(a^{-2}) \leq -\frac{1}{4}\Psi'(M) + O(a^{-2})
\end{equation}
with 
$$- \frac{\Psi'(M)\Psi''(M)}{1 + \Psi'(M)^2} = -\frac{1}{M} + O(M^{-3}),$$
and
$$-\frac{1}{4}\Psi'(M) = -\frac{M}{4\beta_{n,k}^2} + O(M^{-1}).$$
We can first choose $M_0 = M_0(n,k) \geq \Tilde{M}_0$, and then 
\begin{equation}
    a_0 = a_0(n,k) \geq \max(\hat{a}(M_0, n,k), \Tilde{a}_0(M_0,n,k))
\end{equation}
so that corollary \ref{nonvanishing of normal variation at tip}, lemma \ref{properties of W on disk} hold with $M = M_0$, and
$$-\frac{\Psi'(M_0)\Psi''(M_0)}{1 + \Psi'(M_0)^2} + O(a^{-2}) > -\frac{1}{4}\Psi'(M_0) + O(a^{-2})$$
for all $a \geq a_0$. Such choices of $M_0$ and $a_0$ gives a contradiction to  \eqref{comparsion of V and W at boundary of tip and intermideiate}. This implies that $V > 0$ for all $x \geq x_0$ and $a \geq a_0$, thus completing the proof of lemma \ref{injectivity of caps}. 
\end{proof}

As a consequence of lemma \ref{injectivity of caps}, we obtain a foliation of the inside of generalized cylinder by `disks'.
\begin{proposition}\label{Foliation by lower caps}
    There exists $x_1 = x_1(n,k) > 0$ so that the region $$R = \{(xw_1, yw_2) \ | \ x \geq x_1, 0 \leq y < \beta_{n,k} , w_1 \in \mathbf{S}^{k-1}, w_2 \in \mathbf{S}^{n-k}\}$$ is foliated by parts of `disks' which lie in $R$. 
\end{proposition}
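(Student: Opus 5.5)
The plan is to combine the sweeping-out statement of corollary \ref{sweeping out by caps} with the monotonicity in $a$ given by lemma \ref{injectivity of caps}. Let $x_0,a_0$ be the constants from lemma \ref{injectivity of caps} and $m_0$ the constant from remark \ref{Upper bound of starting point for caps}. I will choose $x_1 \ge \max(x_0, m_0, a_0)$, possibly enlarging it further. The enlargement is meant to ensure that every disk $\Gamma^{n,k}_a$ meeting $R$ has $a \geq x_1 \ge a_0$. This holds because such a disk contains a point with $x \ge x_1$ and its profile lives on $[x_s(a),a]$, so necessarily $a \ge x_1$. For these $a$ we have $x_s(a)\le m_0\le x_1$, so the part of $\Gamma^{n,k}_a$ lying in $R$ is exactly the graph of $u_a$ over $[x_1,a]$. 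Existence of a leaf through each point of $R$ then follows from corollary \ref{sweeping out by caps}. That corollary uses the bounds \eqref{C0 estimate for caps in proposisition 4.1}, which show $u_a(x)\to\beta_{n,k}$ as $a\to\infty$ at fixed $x$ and $u_x(x)=0$, together with continuity in $a$ (lemma \ref{smooth dependence of cap solutions}).

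The main step is uniqueness, i.e.\ that distinct leaves are disjoint in $R$. Fix $x\ge x_1$ and consider the map $a\mapsto u_a(x)$ on $[x,\infty)$. For $a>x$ we have $x<a$, so lemma \ref{injectivity of caps} gives $V_a(x)>0$. By \eqref{normal variation of caps}, $V_a = (1+(u_a')^2)^{-1/2}\partial_a u_a$, so $\partial_a u_a(x)>0$. Here I need $u_a(x)$ to be differentiable in $a$ at fixed $x$. This follows from the real-analytic dependence of $\psi(\rho,\epsilon)$ in lemma \ref{smooth dependence of cap solutions} combined with the implicit function theorem applied to $v_a(u_a(x))=x$; the implicit function theorem applies because $v_a'<0$ on $(0,\beta_{n,k})$. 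Hence $a\mapsto u_a(x)$ is continuous on $[x,\infty)$ with $u_x(x)=0$, and strictly increasing. Its limit as $a \to \infty$ is $\beta_{n,k}$ by the lower bound in \eqref{C0 estimate for caps in proposisition 4.1}. So for each $y\in[0,\beta_{n,k})$ there is exactly one $a$ with $u_a(x)=y$, and the leaves through distinct $a$ never meet in $R$.

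Finally, I check that this gives a genuine foliation and not just a partition. The map $(x,a)\mapsto (x,u_a(x))$ is continuous and injective, and it is smooth with nonzero $a$-derivative for $x<a$. By the inverse function theorem the leaf parameter $a(x,y)$ is then smooth on $\{y>0\}$. Near $y=0$ I would use the tip parametrization $\rho=ay$, $x=a-a^{-1}\psi(\rho,a)$ together with the fact that $V=1$ at $\rho=0$ (lemma \ref{Asymptotic expansion of V in cap}(ii)). This gives a local product structure across the axis $y=0$, where the leaves close up smoothly. Rotating by $O(k)\times O(n-k+1)$ transfers everything to $R$.

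I expect the main obstacle to be the bookkeeping at the boundary of the region. One issue is making sure that a single $x_1$ works uniformly: it must dominate $x_0$, $m_0$ and $a_0$ at once, so that no disk with $a<a_0$, where positivity of $V$ is unknown, can reach into $R$. The other is making the differentiability in $a$ rigorous at the tip, where $u_a'=-\infty$. If that is delicate, I would argue entirely in the $(\rho,a)$ or $v_a(y)$ description near the tip, where $\partial_a v_a(y)>0$ is equivalent to $V>0$ and everything is analytic.
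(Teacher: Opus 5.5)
Your proposal is correct and follows the paper's proof: take $x_1=\max(x_0,a_0,m_0)$ so that every disk reaching $R$ has $a\ge a_0$ and $x_s(a)\le x_1$. Coverage then comes from corollary \ref{sweeping out by caps}, and strict monotonicity of $a\mapsto u_a(x)$ comes from $V>0$ in lemma \ref{injectivity of caps}. The extra justifications you give (differentiability in $a$ via the implicit function theorem and lemma \ref{smooth dependence of cap solutions}, the limits at $a=x$ and $a\to\infty$, and the local product structure near $y=0$) go beyond what the paper writes but are consistent with it.
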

\begin{proof}[Proof of proposition \ref{Foliation by lower caps}]
    We take $x_1 = \max(x_0, a_0, m_0) > 0$, where $x_0$ and $a_0$ are constants from lemma \ref{injectivity of caps}, and $m_0$ is a constant from remark \ref{Upper bound of starting point for caps}. Then by lemma \ref{injectivity of caps} and definition of $V$ given in \eqref{normal variation of caps}, $a \to u_{a}(x)$ is monotonically increasing for all $x \geq x_1$ and $a \geq x_1$. Combining this with corollary \ref{sweeping out by caps}, we see that entire $R$ is covered by `disks' for $a \geq x_1$. This completes the proof.  
\end{proof}

We now show that the `trumpets' foliate region between a minimal cone and the cylinder except some compact region containing the origin. Once again, the idea is to show that the normal variation is positive on parts of the `trumpets' sufficiently away from the origin. Since we are focusing on far-field, we focus on parts of `trumpets' where $x \geq l_0$ with $l_0$ being the constant from corollary \ref{Sweeping out by trumpets}. Let us recall the definition of $\alpha_k$, $\beta_{n,k}$ and $\sigma_{n,k}$ given in \eqref{frequently used constants}. \\

We now discuss foliation by `trumpets'. The `trumpets' can be parametrized by
$$ X_{\sigma}(x, w_1, w_2) : [l_0, \infty) \times \mathbf{S}^{k-1} \times \mathbf{S}^{n-k} \to \mathbf{R}^{n+1}, \ X_{\sigma}(x, w_1, w_2) = (xw_1, u_{\sigma}(x)w_2)$$
where $u_{\sigma}$ is given by proposition \ref{Existence of trumpets}.
We then can define the normal variation of `trumpets' by
\begin{equation}\label{normal variation of trumpet}
    V = \nu \cdot \frac{\partial X_{\sigma}}{\partial \sigma} = \frac{1}{\sqrt{1 + (u_{\sigma}')^2}}\frac{\partial u_{\sigma}}{\partial \sigma}.
\end{equation}
We prove the positivity of $V$ on parts of `trumpet' sufficiently away from the origin by using the barrier function given in \eqref{Barrier function W}.
\begin{lemma}\label{nonvanishing of normal variation of trumpets}
    There exists $x_0 = x_0(n,k) > l_0$ so that the normal variation $V$ given in \eqref{normal variation of trumpet} is positive 
    on parts of `trumpet' where $x \geq x_0$.
\end{lemma}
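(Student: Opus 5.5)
We follow the scheme of lemma \ref{injectivity of caps}: compare $V$ with the barrier $W=e^{|X|^2/8}$ of \eqref{Barrier function W} via the maximum principle. The difference is that the `tip region' is now replaced by the asymptotically conical end at $x=\infty$, where positivity of $V$ comes from the asymptotics of $u_\sigma$.

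\textbf{Step 1: $\mathcal{L}_\sigma V=0$.} Each $\Sigma^{n,k}_\sigma$ is a self-shrinker and $u_\sigma$ depends smoothly on $\sigma$ (remark \ref{smooth dependence of trumpets on slope}). Hence, exactly as in (i) of lemma \ref{Asymptotic expansion of V in cap}, $V$ solves $\mathcal{L}_\sigma V=0$, where $\mathcal{L}_\sigma$ is the operator \eqref{linearized operator} on the trumpet.

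\textbf{Step 2: behaviour of $V$ at infinity.} Differentiate the integral identity for $u_\sigma$ from the proof of proposition \ref{Existence of trumpets} in $\sigma$; this is legitimate because the fixed point construction of remark \ref{smooth dependence of trumpets on slope} is smooth in $\sigma$. We expect
\[
\partial_\sigma u_\sigma(x)=x+O(x^{-1}),
\]
so that $V=\frac{x}{\sqrt{1+\sigma^2}}+O(x^{-1})>0$ for $x\ge R(\sigma)$. The key consequence is that $V/W\to0$ as $x\to\infty$, since $W$ grows like $e^{x^2(1+\sigma^2)/8}$. The constants here may depend on $\sigma$, but only positivity on some far region is needed.

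\textbf{Step 3: $W$ is a strict supersolution.} We show $\mathcal{L}_\sigma W<0$ for $x\ge x_0(n,k)$, uniformly in $\sigma$. As in \eqref{estimation of L(W) for caps},
\[
\frac{\mathcal{L}_\sigma W}{W}\le\frac{n+2}{4}+|A|^2-\frac{|X|^2}{16}.
\]
We bound $|A|^2$ with \eqref{geometric quantities calculation}:
\begin{itemize}
\item $0\le u'_\sigma\le\sigma<\sigma_{n,k}$ controls the terms $(k-1)(u'/x)^2$.
\item $u_\sigma\ge\beta_{n,k}$ gives $\frac{n-k}{u_\sigma^2}\le\frac12$.
\item For the first principal curvature, convexity and \eqref{Main ODE y = u(x)} give $0\le\frac{u''}{1+(u')^2}=\phi\le\frac{x u'}{2}-\frac{u}{2}+\frac{n-k}{u}$.
\end{itemize}
The last bullet is the main obstacle, because the crude bound is of order $x$, which after squaring competes with $|X|^2/16$. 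To handle it we rewrite $\phi$ via $w_\sigma$ of lemma \ref{Derivative bound for trumpets}:
\[
u'=w\,\frac{x(u^2-\beta_{n,k}^2)}{u(x^2-\alpha_k^2)},\qquad 1\le w\le 1+\frac{72}{x^2}.
\]
Substituting this into $\phi$, the leading terms cancel, leaving $\phi=O\big(\frac{u}{x^2}\big)+O(1/u)$. With $u\le\sigma_{n,k}x$ this gives $\phi=O(1/x)$ uniformly in $\sigma$. Alternatively, one can use \eqref{asymptotics of trumpets}, but that bound carries a factor $1/\sigma$, so the $w$-estimate is preferable. Either way $|A|^2\le C(n,k)$, hence $\mathcal{L}_\sigma W/W\le C(n,k)-x^2/16<0$ for $x\ge x_0(n,k)$.

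\textbf{Step 4: maximum principle argument.} Set $f=V/W$ on $[x_0,\infty)$. By Step 2, $f>0$ for large $x$ and $f\to0$ at infinity. Suppose $V(\overline{x})\le0$ for some $\overline{x}\ge x_0$, and take the largest such point, so $f(\overline{x})=0$ and $f>0$ on $(\overline{x},\infty)$. Then $f$ attains a positive interior maximum at some $p>\overline{x}$. At $p$, $\mathcal{L}_\sigma V=0$ and the maximum principle for the quotient give $\mathcal{L}_\sigma W\ge0$, contradicting Step 3.

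\textbf{Step 5: the boundary point.} The remaining issue is that the point $\overline{x}$ might be the boundary $x_0$. If $V(x_0)\le0$, the same argument applied on $(x_0,\infty)$ still produces an interior positive maximum, since $f\to0$ at infinity and $f\le0$ at $x_0$. So no contradiction is lost, and $V>0$ for all $x\ge x_0$.
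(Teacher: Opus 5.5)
Your proposal is correct and is essentially the paper's proof. Both arguments use the same ingredients: $\mathcal{L}V=0$; the expansion $V=(x+O(x^{-1}))/\sqrt{1+(u_\sigma')^2}$, which gives $V/W\to0$; the bound $|A|^2\le C(n,k)$ obtained from lemma \ref{Derivative bound for trumpets}, which makes $W=e^{|X|^2/8}$ a strict supersolution for $x\ge x_0(n,k)$; and a positive interior maximum of $V/W$. One small cleanup: substituting $u'$ in terms of $w_\sigma$ gives exactly $\frac{u''}{1+(u')^2}=(w_\sigma-1)\bigl(\frac{u}{2}-\frac{n-k}{u}\bigr)\le\frac{36\sigma_{n,k}}{x}$, so there is no standalone $O(1/u)$ term, and your Step 5 is already contained in Step 4.
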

\begin{proof}[Proof of lemma \ref{nonvanishing of normal variation of trumpets}]
     Fix $\sigma \in (0, \sigma_{n,k})$. 
    By the asymptotics of $u_{\sigma}$ as $x \to \infty$ given in proposition \ref{Existence of trumpets}, we have
    \begin{equation}\label{asymptotic expansion of V as x to infty}
        V(x) = \frac{x + O(x^{-1})}{\sqrt{1 + (u_{\sigma}')^2}}
    \end{equation}
    as $x \to \infty$. By the definition of $\mathcal{L}$ given in \eqref{linearized operator} and the fact that each $\Sigma^{n,k}_{\sigma}$ is a self shrinker, we have
    \begin{equation}
        \mathcal{L}(V) = 0.
    \end{equation}
    Let $W$ be the function given by \eqref{Barrier function W}. We claim that 
    \begin{equation}\label{W is supersolution on trumpets}
        \mathcal{L}(W) < 0
    \end{equation}
    for $x \geq x_0(n,k)$. The proof is essentially the same as that of lemma \ref{injectivity of caps}. Just like in the `disk' case \eqref{estimation of L(W) for caps}, we once again have
    \begin{equation}\label{L(W) estimate for trumpets}
        \frac{\mathcal{L}(W)}{W} \leq \frac{n+2}{4} + |A|^2 - \frac{1}{16}|X|^2.
    \end{equation}
    By proposition \ref{Existence of trumpets}, $u_{\sigma} \geq \beta_{n,k}$, and $u_{\sigma}'' \geq 0$. Also, by lemma \ref{Derivative bound for trumpets}, we have
    $$1 \leq w_{\sigma}(x) \leq  1 + \frac{72}{x^2} $$
    for all $\sigma \in (0, \sigma_{n,k})$ and $x \geq l_0$. Hence we can see that \begin{equation}
        0 \leq \frac{u_{\sigma}''}{1 + (u_{\sigma}')^2} \leq (\frac{u_{\sigma}}{2} - \frac{n-k}{u_{\sigma}})(w_{\sigma}-1) \leq \frac{\sigma_{n,k}x}{2}( \frac{72}{x^2}) \leq C(n,k).
    \end{equation} 
     Therefore by the computations in section \ref{prereq}, we have
     \begin{equation}\label{estimation of |A| for trumpet}
       |A|^2 \leq (\frac{-u_{\sigma}''}{1 + (u_{\sigma}')^2})^2 + (k-1)(\frac{u_{\sigma}'}{x})^2 + (n-k)\frac{1}{u_{\sigma}^2} \leq C(n,k). 
    \end{equation}
   By choosing $x_0 = x_0(n,k) > l_0$ so that
    \begin{equation}\label{choice of x5}
        C(n,k) + \frac{n + 2}{4} - \frac{1}{20}x_0^2 < 0,
    \end{equation}
    we have by \eqref{L(W) estimate for trumpets}, \eqref{estimation of |A| for trumpet}, and the choice of $x_0$ given by \eqref{choice of x5}, the following estimate
    \begin{equation}
        \frac{\mathcal{L}(W)}{W}\leq \frac{n+2}{4} + C(n,k) - \frac{1}{16}|X|^2 \leq \frac{n+2}{4} + C(n,k) - \frac{1}{16}x^2 < 0
    \end{equation}
    for all $x \geq x_0$, $\sigma \in (0, \sigma_{n,k})$.\\
    
    Now one consider $\frac{V}{W}$. It is clear by \eqref{asymptotic expansion of V as x to infty} and the definition of $W$ given in \eqref{Barrier function W} that $$\frac{V}{W}\to 0$$ as $x \to \infty$ and is positive for large $x$. If $V$ vanishes at some $x \geq x_0$, then one can find strictly positive interior maximum of $V/W$. Because $\mathcal{L}(V) = 0$, by the second derivative test, one has
    $$\mathcal{L}(W) \geq 0$$
    at the point where the maximum of $\frac{V}{W}$ is attained. This contradicts inequality \eqref{W is supersolution on trumpets}, hence $V$ does not vanish for $x \geq x_0$. Because $V > 0$ for large $x$, this means $V > 0$ for all $x \geq x_0$, thus proving lemma \ref{nonvanishing of normal variation of trumpets}.
\end{proof}
Combining with the estimates in lemma \ref{Alternative $C^0$ for trumpets}, we obtain the following proposition.
\begin{proposition}\label{FOliation by lower trumpets}
    There exists $x_0 = x_0(n,k) > 0$ so that the region
    $$R = \{(xw_1, yw_2) \ | \ x \geq x_0, \beta_{n,k} < y < \sigma_{n,k}x , w_1 \in \mathbf{S}^{k-1}, w_2 \in \mathbf{S}^{n-k}\}$$
    is foliated by parts of the `trumpets' which lie in $R$.
\end{proposition}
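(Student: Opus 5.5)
The plan mirrors the proof of proposition \ref{Foliation by lower caps}: we need that the trumpets cover $R$ (sweeping) and that distinct trumpets do not intersect in $R$ (monotonicity in $\sigma$). We take $x_0 = x_0(n,k)$ to be the constant from lemma \ref{nonvanishing of normal variation of trumpets}, which already satisfies $x_0 > l_0$. By corollary \ref{Upper bound of starting point for trumpets}, $x_s(\sigma) < l_0 < x_0$ for every $\sigma \in (0,\sigma_{n,k})$. Hence each $u_\sigma$ is defined and smooth on all of $[x_0,\infty)$. By remark \ref{smooth dependence of trumpets on slope}, the map $\sigma \mapsto u_\sigma(x)$ is smooth for each fixed $x \geq x_0$.

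\emph{Covering.} Fix a point $(x,y)$ with $x \geq x_0$ and $\beta_{n,k} < y < \sigma_{n,k}x$. Since $x_0 > l_0$, corollary \ref{Sweeping out by trumpets} gives some $\sigma \in (0,\sigma_{n,k})$ with $y = u_\sigma(x)$. At $x = x_0$ exactly, the corollary is stated for $x > l_0$, so this is still fine. Conversely, every trumpet point with $x \geq x_0$ lies in $R$. Indeed, the proof of proposition \ref{Existence of trumpets}(iii) gives $u_\sigma(x) > \beta_{n,k}$ for $x > x_s(\sigma)$. For the upper bound, lemma \ref{Alternative $C^0$ for trumpets} gives $u_\sigma(x) \leq \sqrt{\beta_{n,k}^2 + \sigma^2(x^2-\alpha_k^2)}$, which is strictly less than $\sigma_{n,k}x$ because $\sigma < \sigma_{n,k}$ and $\beta_{n,k}^2 = \sigma_{n,k}^2\alpha_k^2$. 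So the parts of trumpets with $x \geq x_0$ are precisely the parts lying in $R$.

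\emph{Disjointness.} By \eqref{normal variation of trumpet}, $V = (1+(u_\sigma')^2)^{-1/2}\,\partial_\sigma u_\sigma$. Lemma \ref{nonvanishing of normal variation of trumpets} gives $V > 0$ for $x \geq x_0$, and this holds for all $\sigma$ since the constant $x_0$ is uniform in $\sigma$. Therefore $\partial_\sigma u_\sigma(x) > 0$, so $\sigma \mapsto u_\sigma(x)$ is strictly increasing for each fixed $x \geq x_0$. Consequently the $\sigma$ obtained in the covering step is unique, and two distinct trumpets never meet in $R$. Together with smoothness in $(x,\sigma)$ and the nonvanishing of $\partial_\sigma u_\sigma$, the map $(x,\sigma) \mapsto (x, u_\sigma(x))$ is a diffeomorphism onto the profile of $R$. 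Rotating by $O(k)\times O(n-k+1)$ then gives a smooth foliation of $R$.

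The main obstacle is justifying that $\partial_\sigma u_\sigma$ exists and is continuous on all of $[x_0,\infty)$, which is what the computation of $V$ requires. Remark \ref{smooth dependence of trumpets on slope} only gives smooth dependence on $\sigma$ for large $x$, via the contraction mapping. To pass down to $x_0$, I would use smooth dependence of solutions of the ODE \eqref{Main ODE y = u(x)} on initial data: data at a large $x$ depends smoothly on $\sigma$, and the solution is regular down to $x_0 > x_s(\sigma)$. This needs $x_0$ to stay a definite distance above $l_0$, and away from the singular locus $u = \beta_{n,k}$, uniformly on compact $\sigma$-intervals. That uniformity follows from the lower bound in lemma \ref{Alternative $C^0$ for trumpets}.
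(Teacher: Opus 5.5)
Your proof is correct and follows the paper's argument: take $x_0$ from Lemma~\ref{nonvanishing of normal variation of trumpets}, get strict monotonicity of $\sigma \mapsto u_\sigma(x)$ from $V>0$, and get covering from Corollary~\ref{Sweeping out by trumpets}. Your extra checks fill in points the paper leaves implicit: the strict bound $u_\sigma(x)<\sigma_{n,k}x$ via $(\sigma_{n,k}^2-\sigma^2)(x^2-\alpha_k^2)>0$, and the propagation of smooth $\sigma$-dependence from large $x$ down to $x_0$ by ODE dependence on data. One small correction: $u=\beta_{n,k}$ is not a singularity of \eqref{Main ODE y = u(x)}, so you need no uniformity from the lower bound; it suffices that $u_\sigma$ exists on a compact interval containing $[x_0,X]$, which $x_s(\sigma)<l_0<x_0$ already gives.
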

\begin{proof}[Proof of proposition \ref{FOliation by lower trumpets}]
Choose $x_0 > l_0$ from lemma \ref{nonvanishing of normal variation of trumpets}. Then we see that for each $x \geq x_0(n,k)$ 
$$\sigma \to u_{\sigma}(x)$$
is monotonically increasing. Also, lemma \ref{Sweeping out by trumpets} implies that the entire $R$ is covered by `trumpets'. This completes the proof.
\end{proof}

We are ready to prove theorem \ref{specific form of theorem 1.3}.
\begin{proof}[Proof of theorem \ref{specific form of theorem 1.3}]
    By proposition \ref{Foliation by lower caps}, and proposition \ref{FOliation by lower trumpets} applied with both $k$ and $n -k + 1$, we can find some large constant $R_0 = R_0(n,k) > 0$ so that
    \begin{equation}
        \Sigma^{n,k}_{\sigma}, R_{n,k}(\Sigma^{n,n-k+1}_{\sigma}), \Gamma^{n,k}_{a}, R_{n,k}(\Gamma^{n,n-k+1}_a)
    \end{equation}
  foliate the region
    $$\mathbf{R}^{n+1} \setminus (B(0, R_0)\cup C_{n,k}\cup \Tilde{C}_{n,k}\cup \Lambda_{n,k})$$
    by proposition \ref{Foliation by lower caps} and proposition \ref{FOliation by lower trumpets}.
    Therefore by adding in these three extra self shrinkers into account, we obtain the desired foliation of
    $$\mathbf{R}^{n+1} \setminus B(0, R_0)$$
    thus proving theorem \ref{specific form of theorem 1.3}.
    
\end{proof}
Before ending this section, we show that our foliation also admits a neighborhood of the cylinder $C_{n,k}$ ($\Tilde{C}_{n,k}$) where the unit normals of the foliation satisfies an estimate analogous to that in lemma 4.11 of \cite{ADS}. This in particular implies that the self-shrinkers in theorem \ref{specific form of theorem 1.3} can alternatively be used to obtain the inner-outer estimates in \cite{Du}. This estimate is due to lemma \ref{Derivative bound for trumpets}, and lemma \ref{derivative estimate for caps}.
\begin{proposition}[cf lemma 4.11 of \cite{ADS}]\label{good neighborhood with good normals}
    There exists $x_0 = x_0(n,k) > 0$ so that in the following neighborhood of cylinder $\Tilde{C}_{n,k}$
    $$N_0 = \{(xw_1, yw_2)\ | \ \frac{9}{10}\beta_{n,k} \leq y < \sigma_{n,k}x, \ x \geq x_0\},$$
    if we write the unit normals as
    $$\nu(xw_1, yw_2) = (-\sin \varphi w_1, \cos \varphi w_2)$$
    then in that neighborhood
    $$\tan \varphi = \frac{x(y^2 - 2(n-k))}{y(x^2 - 2(k-1))}w$$
    with $1 \leq w \leq 1 + \frac{K}{x^2}$
    for some $K = K(n,k)$. Similar statement holds for $C_{n,k}$ as well.
\end{proposition}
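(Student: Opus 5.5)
The proof proceeds by direct computation of the normal in terms of the profile curve, then splitting the neighborhood $N_0$ into the part covered by `trumpets' and the part covered by `disks'. In both cases the $w$ of the statement is the quantity $w_\sigma$ (resp. $w_a$) already introduced and estimated.

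First I would record the relation between $\varphi$ and the slope. By \eqref{geometric quantities calculation}, on a surface given by $y=u(x)$ the unit normal is $\frac{1}{\sqrt{1+(u')^2}}(-u'w_1,w_2)$. Hence writing $\nu=(-\sin\varphi\, w_1,\cos\varphi\, w_2)$ gives $\tan\varphi = u'(x)$. On the other hand, the definitions of $w_\sigma$ in Lemma \ref{Derivative bound for trumpets} and of $w_a$ in \eqref{def of w_a in cap} can be solved for $u'$. For a trumpet,
\[
w_\sigma=\frac{x^2-\alpha_k^2}{2x}\cdot\frac{2u_\sigma u_\sigma'}{u_\sigma^2-\beta_{n,k}^2},
\]
so $u_\sigma'=\frac{x(u_\sigma^2-\beta_{n,k}^2)}{u_\sigma(x^2-\alpha_k^2)}\,w_\sigma$. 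Since $\alpha_k^2=2(k-1)$ and $\beta_{n,k}^2=2(n-k)$, this is exactly the displayed formula with $y=u_\sigma(x)$ and $w=w_\sigma$. For a disk, the same algebra with $\ln(\beta_{n,k}^2-u_a^2)$ gives $u_a'=\frac{x(u_a^2-\beta_{n,k}^2)}{u_a(x^2-\alpha_k^2)}\,w_a$, again the displayed formula with $w=w_a$. On the cylinder $y=\beta_{n,k}$ itself, $\varphi=0$ and the formula holds trivially with $w=1$.

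Next I would use the foliation results to assign each point of $N_0$ to a leaf. Take $x_0$ larger than the constants of Proposition \ref{Foliation by lower caps}, Proposition \ref{FOliation by lower trumpets} and Lemma \ref{derivative estimate for caps}. A point with $\beta_{n,k}<y<\sigma_{n,k}x$ lies on a trumpet $u_\sigma$, and Lemma \ref{Derivative bound for trumpets} gives $1\le w_\sigma\le 1+72/x^2$. A point with $\frac{9}{10}\beta_{n,k}\le y<\beta_{n,k}$ lies on a disk $u_a$ with $a\ge x_1$, where $w_a\ge1$ by the $C^1$ estimate \eqref{C1 estimate for caps in proposisition 4.1}.

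The main obstacle is the upper bound for disks. Lemma \ref{derivative estimate for caps} only gives $w_a\le 1+\frac{K}{x^2}+\frac{K}{a^2-x^2}$, and only for $x\le x_{Ma}$. I need to show that $y\ge\frac{9}{10}\beta_{n,k}$ forces $x\le x_{Ma}$ and makes the second term $\lesssim x^{-2}$.
\begin{itemize}
\item For the first part: in the tip region $u_a\le M/a<\frac{9}{10}\beta_{n,k}$ once $a$ is large. So a point with $y\ge\frac{9}{10}\beta_{n,k}$ must have $x\le x_{Ma}$.
\item For the second part: the lower $C^0$ bound $u_a\ge\beta_{n,k}\sqrt{1-\frac{x^2-\alpha_k^2}{a^2-\alpha_k^2}}$ alone does not control $a^2-x^2$. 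Instead I would use the upper bound \eqref{upper bound for caps in intermediate region}. From $y\ge\frac{9}{10}\beta_{n,k}$ it yields $(1-C\frac{\ln a}{a^2})\frac{x^2-c}{a^2-\alpha_k^2}\le 1-\frac{81}{100}$. Hence $x^2\le \frac12 a^2$ for $x\ge x_0$ large, i.e.\ $a^2-x^2\ge x^2$, and therefore $w_a\le 1+\frac{2K}{x^2}$.
\end{itemize}
Since $a\ge x$, large $x_0$ automatically puts $a$ beyond the thresholds $a_0$ in those lemmas. Taking $K(n,k)=\max(72,2\cdot100)$ finishes the proof. The statement for $C_{n,k}$ follows by applying the same argument to $R_{n,k}$ with $k$ replaced by $n-k+1$.
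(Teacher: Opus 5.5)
Your proposal is correct and follows essentially the same route as the paper. You identify $\tan\varphi=u'$, so that $w$ is exactly $w_\sigma$ or $w_a$; you use Lemma~\ref{Derivative bound for trumpets} above the cylinder; and inside the cylinder you use Lemma~\ref{derivative estimate for caps} together with the improved upper bound \eqref{upper bound for caps in intermediate region} to force $x\le x_{Ma}$ and $a^2-x^2\ge x^2$, with large $x_0$ guaranteeing $a\ge a_0$. The only difference is cosmetic: the paper applies that upper bound at the point $p_a$ with $u_a(p_a)=\frac{9}{10}\beta_{n,k}$ and then uses monotonicity, whereas you apply it directly at the given point and also spell out the algebra identifying $w$, which the paper leaves implicit.
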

\begin{proof}[Proof of proposition \ref{good neighborhood with good normals}]
The conclusion automatically follows when $y > \beta_{n,k}$ due to lemma \ref{Derivative bound for trumpets}, hence we focus on the interior of $\Tilde{C}_{n,k}$. Fix $M_0 = M_0(n,k) > 0$ from lemma \ref{derivative estimate for caps}. Then as long as $$\frac{M_0}{a} \leq \frac{\beta_{n,k}}{2},$$
we see that parts of `disks' inside $N_0$ are contained in the `intermediate region' i.e when $x \leq x_{Ma}$. Therefore, by lemma \ref{derivative estimate for caps}, we have
$$w \leq 1 + \frac{K}{x^2} + \frac{K}{a^2 - x^2},$$
where $K = K(n,k) > 0$. 
We now estimate $\frac{1}{a^2 - x^2}$. Let $p_{a} > 0$ satisfy $$u_a(p_{a}) = \frac{9}{10}\beta_{n,k}$$
where $u_a$ is from proposition \ref{Existence of caps}. By the upper bound of $u_a$ given in proposition \ref{Existence of caps}, we have
$$\frac{9}{10}\beta_{n,k} \leq \sqrt{\beta_{n,k}^2[1 - (1 - C(n,k)\frac{\ln a}{a^2})\frac{p_a^2 - c(n,k)}{a^2 - \alpha_k^2}]}.$$
Solving above inequality in $p_a$, we see that for all large enough $a$, we obtain 
$$p_a^2 \leq \frac{1}{4}a^2,$$
which immediately implies that
$$\frac{1}{a^2 - x^2}\leq \frac{1}{x^2}$$
for all $x \leq p_a$. Therefore, one can find some $a_0 = a_0(n,k) > 0$ so that as long as $a \geq a_0$, 
$$w \leq 1 + \frac{2K}{x^2}$$
for all $(xw_1, u_a(x)w_2) \in N_0$. Therefore, by letting $x_0 \geq a_0$, the condition $a \geq a_0$ is automatically satisfied for all `disks' which intersect $N_0$, thus completing the proof of proposition \ref{good neighborhood with good normals}.
\end{proof}

\section*{Acknowledgment}
The author would like to thank his advisor Prof. Nata\v sa \v Se\v sum for her constant support and guidance. He also thanks Prof. Kyeongsu Choi for his various suggestions to improve the quality of the paper.
%\section*{Disclosure of resources}
%ChatGPT was used to generate the figure 1 in the main text, and proofread the initial manuscript. Ai was not used in any other way.

\end{document}